\definecolor{darkgreen}{rgb}{0,0.45,0}
 \DeclareMathOperator{\ob}{ob}
\DeclareMathOperator{\co}{c} 
\newcommand{\cat}[1]{\mathbf{#1}}
\newcommand{\op}{\mathrm{op}}
\renewcommand{\co}{\mathrm{co}}
\newcommand{\id}{\mathrm{id}}
\newcommand{\thg}{{\mathord{\text{--}}}}
\renewcommand{\c}{,\,}
\DeclareMathAlphabet      {\mathbf}{OT1}{cmr}{b}{n}
\newcommand{\Rl}[1][\J]{{#1}^\pitchfork}
\newcommand{\Ll}[1][\K]{{}^\pitchfork{#1}}
\newcommand{\Drl}[1][\J]{{#1}^{\pitchfork\mskip-9.2mu\pitchfork}}
\newcommand{\Dll}[1][\K]{{}^{\pitchfork\mskip-9.2mu\pitchfork}{#1}}
\newcommand{\spn}[1]{{\langle{#1}\rangle}}
\newcommand{\dcat}[1]{\cat{\mathbb #1}}
\newcommand{\cd}[2][]{\vcenter{\hbox{\xymatrix#1{#2}}}}
\def\matrixobject@{%
   \edef \next@{={\DirectionfromtheDirection@ }}%
   \expandafter \toks@ \next@ \plainxy@
   \let\xy@@ix@=\xyq@@toksix@
   \xyFN@ \OBJECT@}
\let\xy@entry@@norm=\entry@@norm
\def\entry@@norm@patched{%
   \let\object@=\matrixobject@
   \xy@entry@@norm }
\renewcommand{\phi}{\varphi}
\newcommand{\A}{{\mathcal A}}
\newcommand{\B}{{\mathcal B}}
\newcommand{\C}{{\mathcal C}}
\newcommand{\D}{{\mathcal D}}
\newcommand{\E}{{\mathcal E}}
\newcommand{\I}{{\mathcal I}}
\newcommand{\J}{{\mathcal J}}
\newcommand{\K}{{\mathcal K}}
\renewcommand{\L}{{\mathcal L}}
\newcommand{\M}{{\mathcal M}}
\renewcommand{\P}{{\mathcal P}}
\newcommand{\R}{{\mathcal R}}
\newcommand{\Ss}{{\mathcal S}}
\newcommand{\V}{{\mathcal V}}
\newcommand{\xtor}[1]{\cdl[@1]{{} \ar[r]|-{\object@{|}}^{#1} & {}}}
\newcommand{\setmanuallabel}[1]{\stepcounter{equation}{\edef\@currentlabel{\theequation}\label{#1}}}
\newcommand{\printmanuallabel}[1]{\stepcounter{equation}\text{(\theequation)}}
\def\hookleftarrowfill@{\arrowfill@\leftarrow\relbar{\relbar\joinrel\rhook}}
\def\twoheadleftarrowfill@{\arrowfill@\twoheadleftarrow\relbar\relbar}
\def\leftbararrowfill@{\arrowdoublefill@{\leftarrow\mkern-5mu}\relbar\mapstochar\relbar\relbar}
\def\Leftbararrowfill@{\arrowdoublefill@{\Leftarrow\mkern-2mu}\Relbar\Mapstochar\Relbar\Relbar}
\def\leftringarrowfill@{\arrowdoublefill@{\leftarrow\mkern-3mu}\relbar{\mkern-3mu\circ\mkern-2mu}\relbar\relbar}
\def\lefttriarrowfill@{\arrowfill@{\mathrel\triangleleft\mkern0.5mu\joinrel\relbar}\relbar\relbar}
\def\Lefttriarrowfill@{\arrowfill@{\mathrel\triangleleft\mkern1mu\joinrel\Relbar}\Relbar\Relbar}
\def\hookrightarrowfill@{\arrowfill@{\lhook\joinrel\relbar}\relbar\rightarrow}
\def\twoheadrightarrowfill@{\arrowfill@\relbar\relbar\twoheadrightarrow}
\def\rightbararrowfill@{\arrowdoublefill@{\relbar\mkern-0.5mu}\relbar\mapstochar\relbar\rightarrow}
\def\Rightbararrowfill@{\arrowdoublefill@{\Relbar\mkern-2mu}\Relbar\Mapstochar\Relbar\Rightarrow}
\def\rightringarrowfill@{\arrowdoublefill@\relbar\relbar{\mkern-2mu\circ\mkern-3mu}\relbar{\mkern-3mu\rightarrow}}
\def\righttriarrowfill@{\arrowfill@\relbar\relbar{\relbar\joinrel\mkern0.5mu\mathrel\triangleright}}
\def\Righttriarrowfill@{\arrowfill@\Relbar\Relbar{\Relbar\joinrel\mkern1mu\mathrel\triangleright}}
\def\leftrightarrowfill@{\arrowfill@\leftarrow\relbar\rightarrow}
\def\mapstofill@{\arrowfill@{\mapstochar\relbar}\relbar\rightarrow}
\renewcommand*\xleftarrow[2][]{\ext@arrow 20{20}0\leftarrowfill@{#1}{#2}}
\providecommand*\xLeftarrow[2][]{\ext@arrow 60{22}0{\Leftarrowfill@}{#1}{#2}}
\providecommand*\xhookleftarrow[2][]{\ext@arrow 10{20}0\hookleftarrowfill@{#1}{#2}}
\providecommand*\xtwoheadleftarrow[2][]{\ext@arrow 60{20}0\twoheadleftarrowfill@{#1}{#2}}
\providecommand*\xleftbararrow[2][]{\ext@arrow 10{22}0\leftbararrowfill@{#1}{#2}}
\providecommand*\xLeftbararrow[2][]{\ext@arrow 50{24}0\Leftbararrowfill@{#1}{#2}}
\providecommand*\xleftringarrow[2][]{\ext@arrow 10{26}0\leftringarrowfill@{#1}{#2}}
\providecommand*\xlefttriarrow[2][]{\ext@arrow 80{24}0\lefttriarrowfill@{#1}{#2}}
\providecommand*\xLefttriarrow[2][]{\ext@arrow 80{24}0\Lefttriarrowfill@{#1}{#2}}
\renewcommand*\xrightarrow[2][]{\ext@arrow 01{20}0\rightarrowfill@{#1}{#2}}
\providecommand*\xRightarrow[2][]{\ext@arrow 04{22}0{\Rightarrowfill@}{#1}{#2}}
\providecommand*\xhookrightarrow[2][]{\ext@arrow 00{20}0\hookrightarrowfill@{#1}{#2}}
\providecommand*\xtwoheadrightarrow[2][]{\ext@arrow 03{20}0\twoheadrightarrowfill@{#1}{#2}}
\providecommand*\xrightbararrow[2][]{\ext@arrow 01{22}0\rightbararrowfill@{#1}{#2}}
\providecommand*\xRightbararrow[2][]{\ext@arrow 04{24}0\Rightbararrowfill@{#1}{#2}}
\providecommand*\xrightringarrow[2][]{\ext@arrow 01{26}0\rightringarrowfill@{#1}{#2}}
\providecommand*\xrighttriarrow[2][]{\ext@arrow 07{24}0\righttriarrowfill@{#1}{#2}}
\providecommand*\xRighttriarrow[2][]{\ext@arrow 07{24}0\Righttriarrowfill@{#1}{#2}}
\providecommand*\xmapsto[2][]{\ext@arrow 01{20}0\mapstofill@{#1}{#2}}
\providecommand*\xleftrightarrow[2][]{\ext@arrow 10{22}0\leftrightarrowfill@{#1}{#2}}
\providecommand*\xLeftrightarrow[2][]{\ext@arrow 10{27}0{\Leftrightarrowfill@}{#1}{#2}}
\newcommand{\twocong}[2][0.5]{\ar@{}[#2] \save ?(#1)*{\cong}\restore}
\newcommand{\twoeq}[2][0.5]{\ar@{}[#2] \save ?(#1)*{=}\restore}
\newcommand{\rtwocell}[3][0.5]{\ar@{}[#2] \ar@{=>}?(#1)+/l 0.2cm/;?(#1)+/r 0.2cm/^{#3}}
\newcommand{\ltwocell}[3][0.5]{\ar@{}[#2] \ar@{=>}?(#1)+/r 0.2cm/;?(#1)+/l 0.2cm/^{#3}}
\newcommand{\ltwocello}[3][0.5]{\ar@{}[#2] \ar@{=>}?(#1)+/r 0.2cm/;?(#1)+/l 0.2cm/_{#3}}
\newcommand{\dtwocell}[3][0.5]{\ar@{}[#2] \ar@{=>}?(#1)+/u  0.2cm/;?(#1)+/d 0.2cm/^{#3}}
\newcommand{\dltwocell}[3][0.5]{\ar@{}[#2] \ar@{=>}?(#1)+/ur  0.2cm/;?(#1)+/dl 0.2cm/^{#3}}
\newcommand{\drtwocell}[3][0.5]{\ar@{}[#2] \ar@{=>}?(#1)+/ul  0.2cm/;?(#1)+/dr 0.2cm/^{#3}}
\newcommand{\dthreecell}[3][0.5]{\ar@{}[#2] \ar@3{->}?(#1)+/u  0.2cm/;?(#1)+/d 0.2cm/^{#3}}
\newcommand{\utwocell}[3][0.5]{\ar@{}[#2] \ar@{=>}?(#1)+/d 0.2cm/;?(#1)+/u 0.2cm/_{#3}}
\newcommand{\dtwocelltarg}[3][0.5]{\ar@{}#2 \ar@{=>}?(#1)+/u  0.2cm/;?(#1)+/d 0.2cm/^{#3}}
\newcommand{\utwocelltarg}[3][0.5]{\ar@{}#2 \ar@{=>}?(#1)+/d  0.2cm/;?(#1)+/u 0.2cm/_{#3}}
\newcommand{\pullbackcorner}[1][dr]{\save*!/#1+1.2pc/#1:(1,-1)@^{|-}\restore}
\newcommand{\pushoutcorner}[1][dr]{\save*!/#1-1.2pc/#1:(-1,1)@^{|-}\restore}
\newcommand{\sh}[2]{**{!/#1 -#2/}}
\theoremstyle{definition}
\theoremstyle{plain}
\newtheorem{Thm}{Theorem}
\newtheorem{Prop}[Thm]{Proposition}
\newtheorem{Cor}[Thm]{Corollary}
\newtheorem{Lemma}[Thm]{Lemma}
\numberwithin{equation}{section}
\theoremstyle{definition}
\newtheorem{Ex}[Thm]{Example}
\newtheorem{Exs}[Thm]{Examples}
\newtheorem{Rk}[Thm]{Remark}
\newcommand{\Lan}{\mathrm{Lan}}
\renewcommand{\l}[1]{L{#1}}
\newcommand{\fl}[1]{\alg{L}{#1}}
\renewcommand{\r}[1]{R{#1}}
\newcommand{\fr}[1]{\alg{R}{#1}}
\newcommand{\m}[1]{\mu_{#1}}
\renewcommand{\c}[1]{\Delta_{#1}}
\newcommand{\lp}[1]{L'{#1}}
\newcommand{\flp}[1]{\alg{L}'{#1}}
\newcommand{\rp}[1]{R'{#1}}
\newcommand{\frp}[1]{\alg{R}'{#1}}
\newcommand{\cp}[1]{\Delta'_{#1}}
\newcommand{\Coalg}[1]{\mathsf{#1}\text-\cat{Coalg}}
\newcommand{\Alg}[1]{\mathsf{#1}\text-\cat{Alg}}
\newcommand{\Kl}[1]{\cat{Kl}(\mathsf{#1})}
\newcommand{\DCoalg}[1]{\mathsf{#1}\text-\mathbb C\cat{oalg}}
\newcommand{\DAlg}[1]{\mathsf{#1}\text-\mathbb A\cat{lg}}
\newcommand{\Sq}[1]{\mathbb S\cat{q}(#1)}
\newcommand{\aone}{{\mathbf 1}}
\newcommand{\atwo}{{\mathbf 2}}
\newcommand{\athree}{{\mathbf 3}}
\newcommand{\alg}[1]{\boldsymbol{#1}}
\newcommand{\awfs}{\textsc{awfs}\xspace}
\newcommand{\Lax}{{\cat{AWFS}_\mathrm{lax}}}
\newcommand{\Oplax}{{\cat{AWFS}_\mathrm{oplax}}}
\newcommand{\Ladj}{{\cat{AWFS}_\mathrm{ladj}}}
\newcommand{\Radj}{{\cat{AWFS}_\mathrm{radj}}}
\newcommand{\Cat}{{\cat{Cat}}}
\newcommand{\CAT}{{\cat{CAT}}}
\newcommand{\Set}{{\cat{Set}}}
\newcommand{\SET}{{\cat{SET}}}
\newcommand{\VCat}[1][\V]{{#1\text-\cat{Cat}}}
\newcommand{\VCAT}[1][\V]{{#1\text-\cat{CAT}}}
\newcommand{\DBL}{{\cat{DBL}}}
\begin{document}
\leftmargini=2em \title[Algebraic weak factorisation systems
I]{Algebraic weak factorisation\\systems I: accessible awfs}
\author{John Bourke}
\address{Department of Mathematics and Statistics, Masaryk University, Kotl\'a\v rsk\'a 2, Brno 60000, Czech Republic}
\email{bourkej@math.muni.cz} 
\author{Richard Garner} \address{Department of Mathematics, Macquarie
  University, NSW 2109, Australia} \email{richard.garner@mq.edu.au}

\subjclass[2000]{Primary: 18A32, 55U35}
\date{\today}

\thanks{The first author acknowledges the support of the Grant agency
  of the Czech Republic, grant number P201/12/G028. The second author
  acknowledges the support of an Australian Research Council Discovery
  Project, grant number DP110102360.}

\begin{abstract}
  Algebraic weak factorisation systems (\awfs) refine weak
  factorisation systems by requiring that the assignations sending a
  map to its first and second factors should underlie an interacting
  comonad--monad pair on the arrow category. We provide a
  comprehensive treatment of the basic theory of \awfs---drawing on
  work of previous authors---and complete the theory with two main new
  results. The first provides a characterisation of \awfs and their
  morphisms in terms of their double categories of left or right maps.
  The second concerns a notion of \emph{cofibrant generation} of an
  \awfs by a small double category; it states that, over a locally
  presentable base, any small double category cofibrantly generates an
  \awfs, and that the \awfs so arising are precisely those with
  accessible monad and comonad. Besides the general theory, numerous
  applications of \awfs are developed, emphasising particularly those
  aspects which go beyond the non-algebraic situation.
\end{abstract}
\maketitle

\section{Introduction}
\label{sec:introduction}
 
A \emph{weak factorisation system} on a category $\C$ comprises two
classes of maps $\L$ and $\R$, each closed under retracts in the arrow
category, and obeying two axioms: firstly, that each map $f \in \C$
admit a factorisation $f = \r f \cdot \l f$ with $\l f \in \L$ and $\r
f \in \R$; and secondly, that each $r \in \R$ have the right lifting
property with respect to each $\ell \in \L$---meaning that, for every
square as in the solid part of:
\begin{equation}
\cd{
  A \ar[d]_{\ell} \ar[r] & C \ar[d]^{r} \\
  B \ar[r] \ar@{.>}[ur] & D }\label{eq:1}
\end{equation}
there should exist a commuting diagonal filler as indicated. Weak
factorisation systems play a key role in \emph{Quillen model
  structures}~\cite{Quillen1967Homotopical}, which comprise two
intertwined weak factorisation systems on a category; but they also
arise elsewhere, for example in the 
categorical semantics of intensional type
theory~\cite{Awodey2009Homotopy,Gambino2008The-identity}.

\emph{Algebraic} weak factorisation systems were introduced
in~\cite{Grandis2006Natural}; they refine the basic notion by
requiring that the factorisation process $f \mapsto (\l f, \r f)$
yield a compatible comonad~$\mathsf L$ and monad~$\mathsf
R$ on the arrow category of $\C$. Given $(\mathsf L, \mathsf R)$, we
re-find $\L$ and $\R$ as the retract-closures of the classes of maps
admitting $\mathsf L$-coalgebra or $\mathsf R$-algebra structure, so
that one may define an algebraic weak factorisation system (henceforth
\awfs) purely in terms of a comonad--monad pair $(\mathsf L, \mathsf
R)$ satisfying suitable axioms; we recall these in
Section~\ref{sec:revision} below.

As shown in~\cite{Garner2009Understanding}, any cofibrantly generated
weak factorisation system on a well-behaved category may be realised
as an \awfs, so that the algebraic notions are entirely appropriate
for doing homotopy theory; this point of view has been pushed by
Riehl, who in~\cite{Riehl2011Algebraic,Riehl2013Monoidal} gives
definitions of algebraic model category and algebraic monoidal model
category, and in subsequent collaboration has used these notions to
obtain non-trivial homotopical
results~\cite{Barthel2013On-the-construction,Blumberg2014Homotopical,Ching2014Coalgebraic}.

Yet \awfs can do more than just serve as well-behaved realisations of
their underlying weak factorisation systems; by making serious use of
the monad $\mathsf R$ and comonad $\mathsf L$, we may capture
phenomena which are invisible in the non-algebraic setting. For
example, each \awfs on $\C$ induces a \emph{cofibrant replacement
  comonad} on $\C$ by factorising the unique maps out of $0$; and if we
choose our \awfs carefully, then the Kleisli category of this comonad
$\mathsf Q$---whose maps $A \rightsquigarrow B$ are maps $QA \to B$ in
the original category---will equip $\C$ with a usable notion of
\emph{weak map}. For instance, there is an \awfs on the category of
tricategories~\cite{Gordon1995Coherence} and strict morphisms
(preserving all structure on the nose) for which $\Kl{Q}$ comprises
the tricategories and their trihomomorphisms (preserving all structure
up to coherent equivalence); this example and others were described
in~\cite{Garner2010Homomorphisms}, and will be revisited in
the companion paper~\cite{Bourke2014AWFS2}.

Probably the most important expressive advantage of \awfs is that
their left and right classes can delineate kinds of map which mere
weak factorisation systems cannot---the reason being that we interpret
the classes of an \awfs as being composed of the
$\mathsf L$-coalgebras and $\mathsf R$-algebras, rather than the
underlying $\L$-maps and $\R$-maps. For example, here are some classes
of map in $\Cat$ which are not the $\R$-maps of any weak factorisation
system, but which---as shown in Examples~\ref{ex:5} below---may be
described in terms of the possession of $\mathsf R$-algebra structure
for a suitable \awfs:
\begin{itemize}
\item The Grothendieck fibrations; 
\item The Grothendieck fibrations whose fibres are groupoids;
\item The Grothendieck fibrations whose fibres have
  finite limits, and whose reindexing functors preserve them;
\item The left adjoint left inverse functors.
\end{itemize}
At a crude level, the reason that these kinds of map cannot be
expressed as classes of $\R$-maps is that they are not retract-closed.
The deeper explanation is that being an $\R$-map is a mere
\emph{property}, while being an $\mathsf R$-algebra is a
\emph{structure} involving choices of basic lifting operations---and
this choice allows for necessary equational axioms to be imposed
between derived operations. As a further demonstration of the power
this affords, we mention the result of~\cite{Hebert2011Weak} that for
any monad $\mathsf T$ on a category $\C$ with finite products, there
is an \awfs on $\C$ whose ``algebraically fibrant objects''---$\mathsf
R$-algebras $X \to 1$---are precisely the $\mathsf T$-algebras.

The existence of \awfs that reach beyond the scope of the
non-algebraic theory opens up intriguing vistas. In a projected sequel
to this paper, we will consider the theory of enrichment over monoidal
\awfs~\cite{Riehl2013Monoidal} and use it to develop an abstract
``homotopy coherent enriched category theory''. In fact, we touch on
this already in the current paper;
Section~\ref{sec:enrich-small-object} describes the \emph{enriched
  small object argument} and sketches some of its applications to
two-dimensional category theory, and to notions from the theory of
quasicategories~\cite{Joyal2002Quasi-categories,Joyal2008The-theory,Lurie2009Higher}
such as \emph{limits}, \emph{Grothendieck fibrations} and \emph{Kan
  extensions}.

The role of these examples, and others like them, will be to
illuminate and justify the main contribution of this paper---that of
giving a comprehensive account of the theory of unenriched \awfs.
Parts of this theory can be found developed across the
papers~\cite{Barthel2013On-the-construction,Garner2009Understanding,Grandis2006Natural,Riehl2011Algebraic,Riehl2013Monoidal};
our objective is to draw the most important of these results together,
and to complete them with two new theorems that clarify and simplify
both the theory and the practice of \awfs.

In order to explain our two main theorems, we must first recall some
\emph{double-categorical} aspects of \awfs. A double category
$\mathbb A$ is an internal category in $\CAT$, as on the left below;
we refer to objects and arrows of $\A_0$ as \emph{objects} and
\emph{horizontal arrows}, and to objects and arrow of $\A_1$ as
\emph{vertical arrows} and \emph{squares}. Internal functors and
internal natural transformations between internal categories in $\CAT$
will be called \emph{double functors} and \emph{horizontal natural
  transformations}; they comprise a $2$-category $\DBL$.
\[
\cd[@C+0.2em]{
  \A_1\!\times_{\A_0}\! \A_1 \ar[r]^-{\circ}
  &
  \A_1 \ar@<-5pt>[r]_-{\mathrm{cod}}\ar@<5pt>[r]^-{\mathrm{dom}} &
  \ar[l]|-{\mathrm{id}}
  \A_0 } \qquad \quad
\cd[@C+0.2em]{
  \Alg{R}\!\times_{\C}\! \Alg{R} \ar[r]^-{\circ} 
  &
  \Alg{R} \ar@<-5pt>[r]_-{\mathrm{cod}}\ar@<5pt>[r]^-{\mathrm{dom}} &
  \ar[l]|-{\mathrm{id}} \C\rlap{ .} } 
\]

To each \awfs $(\mathsf L, \mathsf R)$ on a category $\C$ we may
associate a double category $\DAlg{R}$, as on the right above, whose
objects and horizontal arrows are the objects and arrows of $\C$,
whose vertical arrows are the $\mathsf R$-algebras, and whose squares
are maps of $\mathsf R$-algebras. The functor $\circ \colon \Alg{R}
\times_\C \Alg{R} \to \Alg{R}$ encodes a canonical \emph{composition
  law} on $\mathsf R$-algebras---recalled in
Section~\ref{sec:double-categ-algebr} below---which is an algebraic
analogue of the fact that $\R$-maps in a weak factorisation system are
closed under composition.

There is a forgetful double functor $\DAlg{R} \to \Sq{\C}$ into the
double category of \emph{squares} in $\C$---wherein objects are those
of $\C$, vertical and horizontal arrows are arrows of $\C$, and
squares are commuting squares in $\C$.
In~\cite[Lemma~6.9]{Riehl2011Algebraic}, it was shown that the
assignation sending an \awfs $(\mathsf L, \mathsf R)$ on $\C$ to the
double functor $\DAlg{R} \to \Sq{\C}$ constitutes the action on
objects of a $2$-fully faithful $2$-functor
\begin{equation}
  \DAlg{(\thg)} \colon \Lax \to \DBL^\atwo
\label{eq:36}
\end{equation}
from the $2$-category of \awfs, lax \awfs morphisms and \awfs
$2$-cells---whose definition we recall in
Section~\ref{sec:change-base} below---to the arrow $2$-category
$\DBL^\atwo$.

Our first main result, Theorem~\ref{thm:recognition} below, gives an
elementary characterisation of the essential image of~\eqref{eq:36}
and of its dual, the fully faithful coalgebra $2$-functor
$\DCoalg{(\thg)} \colon \Oplax \to \DBL^\atwo$. In the theory of
monads, a corresponding characterisation of the \emph{strictly
  monadic} functors---those in the essential image of the $2$-functor
sending a monad $\mathsf T$ to the forgetful functor $U^\mathsf T
\colon \Alg{T} \to \C$---is given by Beck's monadicity theorem; and so
we term our result a \emph{Beck theorem} for \awfs.
Various aspects of this result are already in the
literature---see~\cite[Appendix]{Garner2009Understanding},
\cite[Proposition 2.8]{Garner2010Homomorphisms} or \cite[Theorem
2.24]{Riehl2011Algebraic}---and during the preparation of this paper,
we became aware that Athorne had independently arrived at a similar
result as~\cite[Theorem~2.5.3]{Athorne2013Coalgebraic}. We nonetheless
provide a complete treatment here, as this theorem is crucial to a
smooth handling of \awfs, in particular allowing them to be
constructed simply by giving a double category of the correct form to
be one's double category of $\mathsf R$-algebras or $\mathsf
L$-coalgebras.

The second main result of this paper deals with the appropriate notion
of cofibrant generation for \awfs. Recall that a weak factorisation
system $(\L, \R)$ is \emph{cofibrantly generated} if there is a mere
\emph{set} of maps $J$ such that $\R$ is the precisely the class of
maps with the right lifting property against each $j \in J$.
Cofibrantly generated weak factorisation systems are commonplace due
to Quillen's \emph{small object argument}~\cite[\S
II.3.2]{Quillen1967Homotopical}, which ensures that for any set of
maps $J$ in a locally presentable category~\cite{Gabriel1971Lokal},
the \awfs $(\L, \R)$ cofibrantly generated by $J$ exists.

In~\cite[Definition~3.9]{Garner2009Understanding} is described a
notion of cofibrant generation for \awfs: given a small category
$U \colon \J \to \C^\atwo$ over $\C^\atwo$, the \awfs cofibrantly
generated by $\J$, if it exists, has as $\mathsf R$-algebras, maps $g$
equipped with choices of right lifting against each map $Uj$,
naturally with respect to maps of $\J$. This notion is already more
permissive than the usual one---as witnessed
by~\cite[Example~13.4.5]{Riehl2014Categorical}, for example---but we
argue that it is still insufficient, since it excludes important
examples on $\awfs$, such as the ones on $\Cat$ listed above, whose
$\mathsf R$-algebras are not retract-closed.

To rectify this, we introduce in Section~\ref{sec:cofibr-fibr-gener-1}
the notion of an \awfs being cofibrantly generated by a small
\emph{double category} $U \colon \dcat J \to \Sq{\C}$. This condition
specifies the $\mathsf R$-algebras as being maps equipped with
liftings against $Uj$ for each vertical map $j \in \dcat{J}$,
naturally with respect to squares of $\dcat J$, but now with the extra
requirement that, for each pair of composable vertical maps
$j \colon x \to y$ and $k \colon y \to z$ of $\dcat J$, the specified
lifting against $kj$ should be obtained by taking specified lifts
first against $k$ and then against $j$.

This broader notion of cofibrant generation is permissive enough to
capture all our leading examples, including the ones on $\Cat$ listed
above. Our second main theorem justifies it at a theoretical level, by
characterising the cofibrantly generated \awfs on locally presentable
categories as being exactly the \emph{accessible} \awfs---those
$(\mathsf L, \mathsf R)$ whose comonad $\mathsf L$ and monad $\mathsf
R$ preserve $\kappa$-filtered colimits for some regular
cardinal~$\kappa$. More precisely, we show for a locally presentable
$\C$ that every small $\dcat J \to \Sq{\C}$ cofibrantly generates
an accessible \awfs on $\C$; and that every accessible \awfs on $\C$
is cofibrantly generated by a small $\dcat J \to \Sq{\C}$.

We now give a summary of the contents of the paper. We begin in
Section~\ref{sec:revision} with a revision of the basic theory of
\awfs: the definition, the relation with weak factorisation systems,
double categories of algebras and coalgebras, morphisms of \awfs, and
the fully faithful $2$-functors $\DAlg{(\thg)}$ and $\DCoalg{(\thg)}$.
In Section~\ref{sec:beck}, we give our first main result, the Beck
theorem for~\awfs described above, together with two useful variants.
Section~\ref{sec:double-categories-at} then uses the Beck theorem to
give constructions of a wide range of \awfs; in particular, we discuss
the \awfs for \emph{split epis} in any category with binary
coproducts; the \awfs for \emph{lalis} (left adjoint left inverse
functors) in any $2$-category with oplax limits of arrows; and the
construction of \emph{injective} and \emph{projective} liftings of
\awfs.

Section~\ref{sec:cofibrant-generation-1} revisits the notion of
cofibrant generation of \awfs by small categories, as introduced
in~\cite{Garner2009Understanding}, and uses the Beck theorem to give a
simplified proof that such \awfs always exist in a locally presentable
$\C$. This prepares the way for our second main result; in
Section~\ref{sec:cofibr-gener-double} we introduce cofibrant
generation by small double categories, and in
Section~\ref{sec:char-access-awfs}, show that over a locally
presentable base $\C$, such \awfs always exist and are precisely the
accessible \awfs on $\C$.

Finally in Section~\ref{sec:enrich-small-object} we say a few words
about \emph{enriched cofibrant generation}. As mentioned above, a
sequel to this paper will deal with this in greater detail; here, we
content ourselves with giving the basic construction and a range of
applications. In particular, we will see how to express notions such
as \emph{Grothendieck fibrations}, \emph{categories with limits}, and
\emph{Kan extensions} in terms of algebras for suitable \awfs, and
explain how to extend these constructions to the quasicategorical
context.

\section{Revision of algebraic weak factorisation systems}
\label{sec:revision}
Algebraic weak factorisation systems were introduced
in~\cite{Grandis2006Natural}---there called \emph{natural} weak
factorisation systems---and their theory developed further
in~\cite{Athorne2012The-coalgebraic,Barthel2013On-the-construction,Garner2009Understanding,Riehl2011Algebraic,Riehl2013Monoidal}.
They are highly structured objects, and an undisciplined approach runs
the risk of foundering in a morass of calculations. The above papers,
taken together, show that a smoother presentation is possible; in this
introductory section, we draw together the parts of this presentation
so as to give a concise account of the basic aspects of the theory.

Before beginning, let us state our foundational assumptions. $\kappa$
will be a Grothendieck universe, and sets in $\kappa$ will be called
\emph{small}, while general sets will be called \emph{large}.
$\cat{Set}$ and $\cat{SET}$ are the categories of small and large
sets; $\cat{Cat}$ and $\cat{CAT}$ are the $2$-categories of small
categories (ones internal to $\cat{Set}$) and of locally small
categories (ones enriched in $\cat{Set}$). Throughout the paper, all
categories will be assumed to be locally small and all $2$-categories
will be assumed to be locally small (=$\Cat$-enriched) except for ones
whose names, like $\SET$ or $\CAT$, are in capital letters.

\subsection{Functorial factorisations}
\label{sec:functorial-fact}
By a \emph{functorial factorisation} on a category $\C$, we mean a
functor $F \colon \C^\atwo \to \C^\mathbf 3$ from the category of
arrows to that of composable pairs which is a section of the
composition functor $\C^\mathbf 3 \to \C^\atwo$. We write $F = (\l{},
E, \r{})$, to indicate that the value of $F$ at an object $f$ or
morphism $(h,k) \colon f \to g$ of $\C^\atwo$ is given as on the left
and the right of:
\[
X \xrightarrow{\l f} Ef \xrightarrow{\r f} Y
\qquad \quad \qquad
\cd{
 X
  \ar[r]^{\l f} \ar[d]_{h} & Ef \ar[d]|{E(h, k)} \ar[r]^{\r f} & Y
  \ar[d]^{k} \\
 W \ar[r]^{\l g} & Eg \ar[r]^{\r g} & Z\rlap{ .}}
\]

Associated to $(\l{}, E, \r{})$ are the functors $L, R \colon \C^\atwo
\to \C^\atwo$ with actions on objects $f \mapsto \l f$ and $f \mapsto
\r f$, and the natural transformations $\epsilon \colon L \Rightarrow
1$ and $\eta \colon 1 \Rightarrow R$ with components at $f$ given by
the commuting squares:
\begin{equation}
\cd{
A \ar[r]^1 \ar[d]_{\l f} & A \ar[d]^{f} \\
Ef \ar[r]^{\r f} & B
} \qquad \text{and} \qquad
\cd{
A \ar[r]^{\l f} \ar[d]_{f} & Ef \ar[d]^{\r f} \\
B \ar[r]^{1} & B\rlap{ .}
}\label{eq:3}
\end{equation}
Note that $(\l{}, E, \r{})$ is completely determined by either $(L,
\epsilon)$ or $(R, \eta)$.

\subsection{Algebraic weak factorisation systems}
\label{sec:algebr-weak-fact}
An \emph{algebraic weak factorisation system} (\awfs) on $\C$ comprises:
\begin{enumerate}[(i)]
\item A functorial factorisation $(\l{}, E, \r{})$ on $\C$;
\item An extension of $(L, \epsilon)$ to  a comonad $\mathsf L =
  (L,\epsilon,\Delta)$;
\item An extension of
$(R,\eta)$ to a monad $\mathsf R = (R,\eta,\mu)$;
\item A distributive law~\cite{Beck1969Distributive} of the comonad
  $\mathsf L$ over the monad $\mathsf R$, whose underlying 
  transformation $\delta \colon LR \Rightarrow RL$ satisfies
  $\mathrm{dom} \cdot \delta = \mathrm{cod} \cdot \Delta$ and
  $\mathrm{cod} \cdot \delta = \mathrm{dom} \cdot \mu$.
\end{enumerate}

This definition involves less data than may be immediately apparent.
The counit and unit axioms $\epsilon L \cdot \Delta = 1$ and $\mu
\cdot \eta R = 1$ force the components of $\Delta \colon L \to LL$ and
$\mu \colon RR \to R$ to be identities in their respective
domain and codomain components, so given by commuting squares as
on the left and right of:
\begin{equation}
  \label{eq:2}
 \cd{
A \ar[d]_{\l f} \ar[r]^1 & A \ar[d]^{\l{\l f}} \\
Ef \ar[r]^{\c f} & E\l f}
\qquad 
\cd{
Ef \ar[d]_{\l{\r f}} \ar[r]^{\c f} & E\l f
\ar[d]^{\r{\l f}} \\
E\r f \ar[r]^{\m f} & Ef
}
\qquad 
\cd{
E \r f \ar[r]^{\m f} \ar[d]_{\r{\r f}} & Ef \ar[d]^{\r f} \\
B \ar[r]^1 & B\rlap{ ;}}
\end{equation}
while the conditions on $\delta$ in (iv) force its component at $f$
to be given by the central square. Thus the only additional data
beyond the underlying functorial factorisation (i) are maps $\c f
\colon Ef \to E\l f$ and $\m f \colon E\r f \to Ef$, satisfying
suitable axioms. Note that the comonad $\mathsf L$ and monad $\mathsf
R$ between them contain all of the data, so that we may unambiguously
denote an \awfs simply by $(\mathsf L, \mathsf R)$.

By examining~\eqref{eq:3} and~\eqref{eq:2}, we see that the comonad
$\mathsf L$ of an \awfs is a comonad \emph{over}
the domain functor $\mathrm{dom} \colon \C^{\atwo} \to \C$. By this
we mean that $\mathrm{dom} \cdot L = \mathrm{dom}$ and that
$\mathrm{dom} \cdot \epsilon$ and $\mathrm{dom} \cdot \Delta$ are identity
natural transformations. Dually, the monad $\mathsf R$ is a monad
over the codomain functor.

\subsection{Coalgebras and algebras}
To an algebraic weak factorisation system $(\mathsf L, \mathsf R)$, we
may associate the categories $\Coalg{L}$ and $\Alg{R}$ of coalgebras
and algebras for $\mathsf L$ and $\mathsf R$; these are to be thought
of as constituting the respective left and right classes of the \awfs.
Because $\mathsf L$ is a comonad over the domain functor, a coalgebra
structure $f \to Lf$ on $f \colon A \to B$ necessarily has its domain
component an identity, and so is determined by a single map
$s \colon B \to Ef$; we write such a coalgebra as
$\alg f = (f, s) \colon A \to B$. Dually, an $\mathsf R$-algebra
structure on $g \colon C \to D$ is determined by a single map
$p \colon Ef \to C$, and will be denoted
$\alg g = (g,p) \colon C \to D$. If the base category $\C$ has an
initial object $0$, then we may speak of \emph{algebraically cofibrant
  objects}, meaning $\mathsf L$-coalgebras $0 \to X$; dually, if $\C$
has a terminal object~$1$, then an \emph{algebraically fibrant object}
is an $\mathsf R$-algebra $X \to 1$.


\subsection{Lifting of coalgebras against algebras}\label{sec:lift}
Given an $\mathsf L$-coalgebra $\alg f = (f,s)$, an $\mathsf
R$-algebra $\alg g = (g,p)$ and a commuting square as in the centre
of:
\begin{equation}\label{eq:filler}
\cd{
A' \ar[d]_{f'} \ar[r]^a & A \ar[d]^{f} \\
B' \ar[r]_{b} & B} \qquad \qquad
\cd{
A \ar[d]_{f} \ar[r]^{u}  & C \ar[d]^{g} \\
B \ar@{.>}[ur] \ar[r]_{v}  & D} \qquad \qquad
\cd{
C \ar[d]_{g} \ar[r]^c & C' \ar[d]^{g'} \\
D \ar[r]_{d} & D'}
\end{equation}
there is a canonical diagonal filler $\Phi_{\alg f, \alg g}(u,v) \colon B
\to C$ given by the composite $p \cdot E(u,v) \cdot s \colon B \to Ef
\to Eg \to C$. 
These fillers are natural with respect to morphisms of
$\mathsf L$-coalgebras and $\mathsf R$-algebras; which is to say that
if we have commuting squares as on the left and right above
which underlie, respectively, an $\mathsf L$-coalgebra
morphism $\alg f' \to \alg f$ and an $\mathsf R$-algebra morphism
$\alg g \to \alg g'$, then composing~\eqref{eq:filler} with these two
squares preserves the canonical filler:
\[
c \cdot \Phi_{\alg f, \alg g}(u,v) \cdot b = 
\Phi_{\alg f', \alg g'}(cua,dvb)\rlap{ .}
\]

Writing $U \colon \Coalg{L} \to \C^{\atwo}$ and
$V \colon \Alg{R} \to \C^{\atwo}$ for the forgetful
functors, this naturality may be expressed by saying that the
canonical liftings constitute the components of a natural
transformation
\begin{equation}\label{eq:4}
  \Phi \colon \C^{\atwo}(U\thg,V?) \Rightarrow
  \C(\mathrm{cod}\ U\thg, \mathrm{dom}\ 
  V?) \colon \Coalg{L}^\op \times \Alg{R} \to \Set \ \text.
\end{equation}

\subsection{Factorisations with universal properties}
\label{sec:fact-with-univ}
The two parts of the factorisation $f = \r f \cdot
\l f$ of a map underlie the cofree $\mathsf L$-coalgebra
$\fl f = (\l f,\c f) \colon A \to Ef$ and
the free $\mathsf R$-algebra $\fr f = (\r f,\m f)
\colon Ef \to B$. The freeness of the latter says that,
for any $R$-algebra $\alg g = (g,p)$ and morphism $(h,k) \colon f \to
g$ in $\C^\atwo$, there is a unique arrow $\ell$ such that the
left-hand diagram in
\begin{equation}\label{eq:5}
\cd[@-0.5em]{A\ar[rr]^{h} \ar[dd]_{f} \ar[dr]^{\l f} & & C \ar[dd]^{g}\\
& Ef \ar[d]_{\r f}  \ar@{.>}[ur]|{\exists ! \ell} \\
B \ar[r]^{1} & B \ar[r]^{k} & D\\
} \qquad \qquad
\cd[@-0.5em]{A \ar[r]^h \ar[dd]_f & C \ar[r]^1 \ar[d]^{\l g} & C \ar[dd]^{g}\\
& Ef \ar[dr]^{\r g} \\
B \ar@{.>}[ur]|{\exists ! m} \ar[rr]^{k} & & D
}
\end{equation}
commutes and such that $(\ell,k)$ is an algebra morphism $\fr f \to
\alg g$.
The dual universal property, as on the right above, describes the
co-freeness of $\fl g$ with respect to maps out of an $\mathsf
L$-coalgebra $\alg f = (f,s)$.

\setmanuallabel{eq:9z}  \setmanuallabel{eq:9az}
\setmanuallabel{eq:10z}
\addtocounter{equation}{-3}
Observe also the following canonical liftings involving (co)free
(co)algebras:
\begin{equation*}
  \begin{array}{cccc}
  \quad\cd{A \ar[d]_{\fl g} \ar[r]^-{1}  & A \ar[d]^{(g,p)\!\!\!}
    \\ Eg \ar[r]_-{\r g} \ar@{.>}[ur]|p & B} \quad &
  \quad\cd{A \ar[d]_{(f,s)} \ar[r]^-{\l f}  & A \ar[d]^{\fr f}
    \\ B \ar[r]_-{1} \ar@{.>}[ur]|s & B} \quad &
  \quad\cd{E f \ar[d]_{\fl {\r f}} \ar[r]^-{\c f}  & E\l f \ar[d]^{\fr {\l f}}
    \\ E\r f \ar[r]_{\m f} \ar@{.>}[ur]|{\c f \cdot \m f} & Ef\rlap{ .}}\quad \\
\printmanuallabel{eq:9z}&   \printmanuallabel{eq:9az} &   \printmanuallabel{eq:10z}
\end{array}
\end{equation*}
Here, \eqref{eq:9z} implies that an $\mathsf R$-algebra is uniquely
determined by its liftings against $\mathsf L$-coalgebras, and dually
in~\eqref{eq:9az}; while~\eqref{eq:10z} expresses precisely the
non-trivial axiom of the distributive law $\delta \colon \mathsf{LR}
\Rightarrow \mathsf{RL}$.

\subsection{Underlying weak factorisation system}
\label{sec:underly-weak-fact-1}
By the preceding two sections, the classes of maps in $\C$ that admit
$\mathsf L$-coalgebra or $\mathsf R$-algebra structure satisfy all of
the axioms needed to be the two classes of a weak factorisation
system, except maybe for closure under retracts. On taking the
retract-closures of these two classes, we thus obtain a weak
factorisation system $(\L, \R)$, the \emph{underlying weak
  factorisation system} of $(\mathsf L, \mathsf R)$.

\subsection{Algebras and coalgebras from liftings}
\label{sec:algebr-coalg-from}
A \emph{coalgebra lifting operation} on a map $g \colon C \to D$ is a
function $\phi_{\thg g}$ assigning to each $\mathsf L$-coalgebra $\alg
f$ and each commuting square $(u,v) \colon f \to g$ as
in~\eqref{eq:filler} a diagonal filler $\phi_{\alg f,g}(u,v) \colon B
\to C$, naturally in maps of $\mathsf L$-coalgebras. Maps equipped
with coalgebra lifting operations are the objects of a category
$\Rl[\Coalg{L}]$ over $\C^\atwo$---the nomenclature will be explained
in Section~\ref{sec:lifting-operations} below---whose maps $(g,
\phi_{\thg g}) \to (g', \phi_{\thg g'})$ are maps $(c,d) \colon g \to
g'$ of $\C^\atwo$ for which $c \cdot \phi_{\alg f,g}(u,v) = \phi_{\alg
  f,g'}(cu,dv)$. Any $\mathsf R$-algebra $\alg g$ induces the lifting
operation $\Phi_{\thg \alg g}$ on its underlying map, and
by~\eqref{eq:4} any algebra map respects these liftings; so we have a
functor $\bar \Phi \colon \Alg{R} \to \Rl[\Coalg{L}]$ over
$\C^\atwo$.

\begin{Lemma}
  \label{lem:2}
  $\bar \Phi \colon \Alg{R} \to \Rl[\Coalg{L}]$ is
  injective on objects and fully faithful, and has in its image just
  those $(g, \phi_{\thg g})$ such that $\phi_{\fl f, g}(u,v) \cdot
  \mu_f = \phi_{\fl{\r f}, g}(\phi_{\fl f, g}(u,v),v \cdot \mu_f)$ for
  all maps $(u,v) \colon \l f \to g$:
  \begin{equation}
\cd[@-0.6em@C+1em]{
  A \ar[d]_{\fl f} \ar[r]^1 & A \ar[dd]^(0.55){\fl f} \ar[r]^u & C \ar[dd]^g \\
  Ef \ar[d]_{\fl {\r f}} \\
  E\r f \ar[r]_{\m f} \ar@{.>}[uurr]
\ar@<1.1ex>@{}[uurr]|(0.4)*=0[@]{\scriptstyle{\phi(u,v) \cdot \mu_f}}
& Ef \ar[r]_v \ar@{.>}[uur]
\ar@<-1.1ex>@{}[uur]|(0.5)*=0[@]{\scriptstyle{\phi(u,v)}}
& D }
\qquad = \qquad \cd[@-0.6em@C+1.2em]{
  A \ar[d]_{\fl f} \ar[r]^u & C \ar[dd]^g \\
  Ef \ar[d]_{\fl {\r f}} \ar@{.>}[ur]  \ar@<1.1ex>@{}[ur]|(0.45)*=0[@]{\scriptstyle{\phi(u,v)}}\\
  E\r f \ar[r]_-{v \cdot \m f} \ar@{.>}[uur]
  \ar@<-1.1ex>@{}[uur]|-*=0[@]{\scriptstyle{\phi(\phi(u,v),v \mu_f)}} &
  D\rlap{ .} }\label{eq:37}
\end{equation}
\end{Lemma}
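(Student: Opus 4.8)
The plan is to exploit that $\bar\Phi$ is a functor over $\C^\atwo$, acting as the identity on underlying arrows; faithfulness is then automatic, and the remaining assertions all hinge on a single \emph{recovery formula}. Given any object $(g,\phi_{\thg g})$ of $\Rl[\Coalg{L}]$ with $g\colon C\to D$, I put $p\defeq\phi_{\fl g,g}(1,\r g)\colon Eg\to C$ and claim that
\[
\phi_{\alg f,g}(u,v)=p\cdot E(u,v)\cdot s
\]
for every $\mathsf L$-coalgebra $\alg f=(f,s)$ and every square $(u,v)\colon f\to g$.

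I would prove this by two applications of the naturality of $\phi_{\thg g}$ in coalgebra maps. The coalgebra structure supplies a coalgebra morphism $(1,s)\colon\alg f\to\fl f$; naturality along it, together with the counit identity $\r f\cdot s=1$, rewrites $\phi_{\alg f,g}(u,v)$ as $\phi_{\fl f,g}(u,v\cdot\r f)\cdot s$. Next, $L(u,v)\colon\fl f\to\fl g$ is a morphism of cofree coalgebras with underlying square $(u,E(u,v))\colon\l f\to\l g$; naturality along it, applied to $(1,\r g)\colon\l g\to g$ and using the functorial-factorisation identity $\r g\cdot E(u,v)=v\cdot\r f$, rewrites $\phi_{\fl f,g}(u,v\cdot\r f)$ as $\phi_{\fl g,g}(1,\r g)\cdot E(u,v)=p\cdot E(u,v)$. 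Crucially this argument uses naturality only, and never~\eqref{eq:37}.

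The three structural claims then fall out. For injectivity on objects, if $(g,\phi_{\thg g})=\bar\Phi(\alg g)$ for an algebra $\alg g=(g,p_0)$, then~\eqref{eq:9z} identifies $p=\phi_{\fl g,g}(1,\r g)$ with $p_0$, so the algebra is recovered from its image. For fullness, a morphism $(c,d)$ of $\Rl[\Coalg{L}]$ from $\bar\Phi(g,p)$ to $\bar\Phi(g',p')$ respects liftings; evaluating this at $\fl g$ and $(1,\r g)\colon\l g\to g$, then expanding by the filler formula for $\Phi'$ and simplifying $E(c,d\cdot\r g)\cdot\c g=E(c,d)$ via the comonad counit identity $E(1,\r h)\cdot\c h=1$ (here at $h=g$), produces exactly the algebra-morphism equation $c\cdot p=p'\cdot E(c,d)$; so $(c,d)$ lifts to $\Alg{R}$.

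Finally I would characterise the image. The forward inclusion is a direct calculation: expanding both sides of~\eqref{eq:37} through $\phi_{\fl f,g}(u,v)=p\cdot E(u,v)\cdot\c f$ and reducing by the monad multiplication axiom and the distributive-law identity of~\eqref{eq:10z}. For the converse, given $(g,\phi_{\thg g})$ satisfying~\eqref{eq:37}, I set $p\defeq\phi_{\fl g,g}(1,\r g)$ and show $(g,p)$ is an $\mathsf R$-algebra: the unit law $p\cdot\l g=1$ is the upper triangle of the filler, and the multiplication law $p\cdot\m g=p\cdot E(p,1_D)$ is precisely~\eqref{eq:37} at $f=g$, $(u,v)=(1,\r g)$, where the right-hand side is evaluated by the recovery formula and then simplified via $\r g\cdot\m g=\r{\r g}$ and $E(p,\r{\r g})\cdot\c{\r g}=E(p,1_D)$ by the counit identity (here at $h=\r g$); the recovery formula then gives $\bar\Phi(g,p)=(g,\phi_{\thg g})$. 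I expect the one genuinely delicate point to be this matching of~\eqref{eq:37} with the associativity axiom: collapsing the nested right-hand filler of~\eqref{eq:37} to the single composite $p\cdot E(p,1_D)$ requires careful bookkeeping of the counit, multiplication and distributive-law identities, and the forward inclusion is the mirror of the same computation.
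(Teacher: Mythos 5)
Your proof is correct, and its skeleton matches the paper's: the same $p \defeq \phi_{\fl g, g}(1, \r g)$, the same unit-square evaluation for fullness, and the same two algebra axioms checked in the converse. The genuine difference is organisational: you isolate the \emph{recovery formula} $\phi_{\alg f, g}(u,v) = p \cdot E(u,v) \cdot s$, valid for every object of $\Rl[\Coalg{L}]$ and proved by naturality alone (along the coassociativity map $(1,s) \colon \alg f \to \fl f$ and the cofree maps $L(u,v)$), and then derive all four claims from it. The paper never states this formula; it argues pointwise from ``universality~\eqref{eq:5} and naturality~\eqref{eq:4}'': its forward inclusion is reduced, via freeness of free algebras, to the single case $\alg g = \fr{\l f}$, $(u,v) = (\l{\l f}, 1)$, which \eqref{eq:9z} and \eqref{eq:10z} then settle in two lines; and its converse checks the multiplication law by naturality along the coalgebra map $(p, E(p,1)) \colon \fl{\r g} \to \fl g$, where you instead use recovery plus the counit identity. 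Your route buys a proof that never touches the universal property~\eqref{eq:5}, and it makes explicit the paper's unproved remark just after the lemma that $\Rl[\Coalg{L}]$ is isomorphic to $(R,\eta)\text-\cat{Alg}$ --- the recovery formula \emph{is} that isomorphism. The cost sits in your forward inclusion, which you leave as a ``direct calculation'': collapsing the nested filler needs, beyond the multiplication axiom and \eqref{eq:10z} that you cite, also functoriality of $E$ (to factor the square $(p \cdot E(u,v) \cdot \c f,\, v \cdot \m f)$ through $(\c f, \m f)$ and $(E(u,v), v)$) and naturality of $\mu$ in the form $\m g \cdot E(E(u,v), v) = E(u,v) \cdot \m{\l f}$; with these the computation does close up, so this is bookkeeping rather than a gap, but it is exactly where the paper's reduction to the universal case is slicker.
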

\begin{proof}
  We observed in Section~\ref{sec:fact-with-univ} that an $\mathsf
  R$-algebra is determined by its liftings against coalgebras, whence
  $\bar \Phi$ is injective on objects. For full fidelity, let $\alg g
  = (g,p)$ and $\alg h = (h,q)$ be $\mathsf R$-algebras and $(c,d)
  \colon (g, \Phi_{\thg\alg g}) \to (h, \Phi_{\thg \alg h})$ a map of
  underlying lifting operations; we must show $(c,d)$ is an $\mathsf
  R$-algebra map. Consider the diagram
\[
\cd{C \ar[d]_{\fl g} \ar[r]^-{1}  & C \ar[d]^{\alg g} \ar[r]^c & D
  \ar[d]^{\alg h} 
    \\ Eg \ar[r]_-{\r g} \ar@{.>}[ur]|p & C \ar[r]_d & C'} 
  \quad = \quad 
\cd{C \ar[d]_{\fl g} \ar[r]^-{c}  & C' \ar[d]_{\fl h} \ar[r]^-1 & C'
  \ar[d]^{\alg h}
    \\ Eg \ar[r]_-{E(c,d)}  & Eh \ar[r]_{\r{h}} \ar@{.>}[ur]|{q} &
    D\rlap{ .}}
\]
By~\eqref{eq:9z} $\Phi_{\thg\alg g}$ assigns the filler $p$ to the far
left square; so as $(c,d)$ respects liftings, $\Phi_{\thg\alg h}$
assigns the filler $c \cdot p$ to the composite left rectangle.
Likewise $\Phi_{\thg\alg h}$ assigns the filler $q$ to the far right
square; so as the square to its left is a (cofree) map of $\mathsf
L$-coalgebras, $\Phi_{\thg\alg h}$ assigns the filler $q \cdot E(c,d)$
to the composite right rectangle. Thus $c \cdot p = q \cdot E(c,d)$
and $(c,d)$ is an $\mathsf R$-algebra map as required.

We next show that each $(g, \Phi_{\thg \alg g})$ in the image of $\bar
\Phi$ satisfies~\eqref{eq:37}. By universality~\eqref{eq:5} and
naturality~\eqref{eq:4}, it suffices to take $\alg g = \fr {\l f}$ and
$(u,v) = (\l{\l f}, 1)$, and now
\begin{align*}
  \Phi_{\fl f, \fr {\l f}}(\l{\l f},1) \cdot \m f &=
  \c f \cdot \m f = \Phi_{\fl {\r f}, \fr {\l f}}(\c f, \m f) \\
&=\Phi_{\fl {\r f}, \fr {\l f}}(\Phi_{\fl f, \fr {\l f}}(\l{\l f},1), \m f)
\end{align*}
by~\eqref{eq:9z} and \eqref{eq:10z}, as required. Finally, we show that
$\bar \Phi$ is surjective onto those pairs $(g, \phi_{\thg g})$
satisfying~\eqref{eq:37}. Given such a pair, we define $p = \phi_{\fl
  g, g}(1, \r g)$; now by universality~\eqref{eq:5} and
naturality~\eqref{eq:4}, the pair $(g, \phi_{\thg g})$ will be the
image under $\bar \Phi$ of $\alg g =
(g,p)$ so long as $(g,p)$ is in fact an $\mathsf R$-algebra. The unit
axiom is already inherent in $p$'s being a lifting; as for the
multiplication axiom, we have
\begin{align*}
  p \cdot \mu_g & = \phi_{\fl g, g}(1, \r g) \cdot \mu_g
  = \phi_{\fl{\r g}, g}(\phi_{\fl g, g}(1,\r g),\r g \cdot \mu_g)\\
  & = \phi_{\fl{\r g}, g}(p,\r{\r g}) = \phi_{\fl{\r g}, g}(p,\r g
  \cdot E(p,1))\\
  & = \phi_{\fl{g}, g}(1,\r g ) \cdot E(p,1) = p \cdot E(p,1)
\end{align*}
by definition of $p$, \eqref{eq:37}, and naturality of $\phi_{\thg
  g}$ with respect to
the $\mathsf L$-coalgebra morphism $(p,E(p,1)) \colon \fl {\r g} \to
\fl g$.
\end{proof}

It is not hard to see that the category $\Rl[\Coalg{L}]$ is
in fact isomorphic to the category $(R,\eta)\text-\cat{Alg}$ of
algebras for the mere pointed endofunctor $(R, \eta)$ underlying the
monad $\mathsf R$, with the functor $\bar \Phi$ corresponding under
this identification to the natural inclusion of $\Alg{R}$ into $(R,
\eta)\text-\cat{Alg}$. Of course, all these results have a dual form
characterising coalgebras in terms of their liftings against algebras.

\subsection{Double categories of algebras and coalgebras}
\label{sec:double-categ-algebr}In a weak factorisation system the left
and right classes of maps are closed under composition and contain the
identities. We now describe the analogue of this for \awfs. For binary
composition, given $\mathsf R$-algebras $\alg g \colon A \to
B$ and $\alg h \colon B \to C$ we may define a coalgebra lifting
operation $\Phi_{\thg, \alg h \cdot \alg g}$ on the composite
underlying map $h \cdot g$, whose liftings are obtained by first
lifting against $\alg h$ and then against $\alg g$, as on the left in:
\begin{equation}
  \Phi_{\alg f, \alg h \cdot \alg g}(u, v) = \Phi_{\alg f, \alg g}(u,
  \Phi_{\alg f, \alg h}(gu, v)) \qquad \qquad \Phi_{\alg f, \alg 1_A}(u,v) =
  v
  \rlap{ .}\label{eq:7}
\end{equation}
It is easy to verify that this is a coalgebra lifting operation
satisfying~\eqref{eq:37}, so that by Lemma~\ref{lem:2}, it is the
canonical lifting operation associated to a unique $\mathsf R$-algebra
$\alg h \cdot \alg g \colon A \to C$, the composite of $\alg g$ and
$\alg h$. As for nullary composition, each identity map $1_A$ bears a
\emph{unique} coalgebra lifting operation as on the right above, which
is easily seen to satisfy~\eqref{eq:37}; so each identity map bears an
$\mathsf R$-algebra structure $\alg 1_A$ which is in fact
\emph{unique}.

This composition law for algebras is associative and unital: to see
associativity, we check that $\alg k \cdot (\alg h \cdot \alg g)$ and
$(\alg k \cdot \alg h) \cdot \alg g$ have the same lifting operations,
and apply Lemma~\ref{lem:2}; unitality is similar. We may also use
Lemma~\ref{lem:2} to verify that each $(a,a) \colon \alg 1_A \to \alg
1_{A'}$ is a map of $\mathsf R$-algebras, and that if $(a,b) \colon
\alg g \rightarrow \alg g'$ and $(b,c) \colon \alg h \to \alg h'$ are
maps of $\mathsf R$-algebras, then so too is $(a,c) \colon \alg h
\cdot \alg g \to \alg h' \cdot \alg g'$.
Consequently, there is a double category $\DAlg{R}$ whose objects and
horizontal arrows are those of $\C$, and whose vertical arrows and
squares are the $\mathsf R$-algebras and the maps thereof.
There is a forgetful double functor $U^\mathsf R \colon \DAlg{R} \to \Sq{\C}$
into the double category of commutative squares in $\C$, which,
displayed as an internal functor between internal categories in
$\CAT$, is as on the left in:
\[
\cd[@C+2.2em@R-0.3em]{
  \Alg{R} \times_{\C} \Alg{R} \ar[r]^-{U^\mathsf R \times_\C U^\mathsf
    R}\ar[d]|{m} \ar@<-5pt>[d]_p \ar@<5pt>[d]^q  & \C^{\mathbf 3} \ar@<-5pt>[d]_p \ar@<5pt>[d]^q \ar[d]|m \\
  \Alg{R} \ar@<-5pt>[d]_d\ar@<5pt>[d]^c \ar[r]^{U^\mathsf R} &
  \C^\atwo
  \ar@<-5pt>[d]_d\ar@<5pt>[d]^c \\
  \ar[u]|i \C \ar[r]_1 & \C \ar[u]|i } \qquad \quad \qquad
\cd[@C+1.5em@R-0.3em]{
  \A_1\!\times_{\A_0}\! \A_1 \ar[r]^-{V_1
    \times_{V_0} V_1}\ar[d]|{m} \ar@<-5pt>[d]_p \ar@<5pt>[d]^q & \C^\mathbf 3 \ar[d]|m \ar@<-5pt>[d]_p \ar@<5pt>[d]^q\\
  \A_1 \ar@<-5pt>[d]_d\ar@<5pt>[d]^c \ar[r]^{V_1} &
  \C^\atwo
  \ar@<-5pt>[d]_d\ar@<5pt>[d]^c \\
  \ar[u]|i \A_0 \ar[r]_{V_0 = 1} & \C\rlap{ .} \ar[u]|i }\] 

Note that this double functor has object component \emph{an identity}
and arrow component a \emph{faithful functor}. By a \emph{concrete
  double category over $\C$}, we mean a double functor $V \colon
\mathbb A \to \Sq{\C}$ (as on the right above) whose $V$ has these two
properties. 
For example, the $\mathsf L$-coalgebras also constitute a concrete
double category $\DCoalg{L}$ over $\C$.

We note before continuing that the equality~\eqref{eq:37} may now be
re-expressed as saying that each square as on the
left below is one of $\DCoalg{L}$, and each square on the right is one
of $\DAlg{R}$. These squares will be important in what follows.
\begin{equation}
\cd[@-0.5em]{
A \ar[d]_{\fl f} \ar[r]^1 & A \ar[dd]^{\fl f} \\
 Ef \ar[d]_{\fl {\r f}} \\
E\r f \ar[r]_{\m f} & Ef
} \qquad \qquad \qquad
\cd[@-0.5em]{
Ef \ar[dd]_{\fr {f}} \ar[r]^{\c f} & E\l f
\ar[d]^{\fr {\l f}} \\
& Ef \ar[d]^{\fr f} \\
B \ar[r]_{1} & B\rlap{ .}
}\label{eq:13}
\end{equation}

\subsection{Morphisms of AWFS}\label{sec:change-base}
Given \awfs $(\mathsf L, \mathsf R)$ and $(\mathsf L', \mathsf R')$ on
categories $\C$ and $\D$, a \emph{lax morphism} of \awfs $(F, \alpha)
\colon (\C, \mathsf L, \mathsf R) \to (\D, \mathsf L', \mathsf R')$
comprises a functor $F \colon \C \to \D$ and a natural family of maps
$\alpha_f$ rendering commutative each square as on the left in:
\begin{equation}\label{eq:laxmorphism}
  \cd[@-1em@C-0.5em]{
    & FA \ar[dl]_{\lp {Ff}} \ar[dr]^{F\l f} \\
    E'Ff \ar[rr]^{\alpha_f} \ar[dr]_{\rp {Ff}} & & 
    FEf\rlap{ } \ar[dl]^{F\r f} \\ & FB} \qquad \qquad
  \cd[@!@-0.8em]{
    E'Ff \ar[r]^{\alpha_f} \ar[d]_{E'(\gamma_A,  \gamma_B)} & 
    FEf \ar[d]^{\gamma_{Ef}} \\
    E'Gf \ar[r]_{\beta_f} & GEf\rlap{ ,}
  }
\end{equation}
and such that the induced $(\alpha, 1) \colon R'F^\atwo \Rightarrow
F^\atwo R$ and $(1, \alpha) \colon L'F^\atwo \Rightarrow F^\atwo L$
are respectively a lax monad morphism $\mathsf R \to \mathsf R'$ and a
lax comonad morphism $\mathsf L \to \mathsf L'$ over $F^\atwo \colon
\C^\atwo \to \D^\atwo$ (i.e., a \emph{monad functor} and a
\emph{comonad opfunctor} in the terminology
of~\cite{Street1972The-formal}). A transformation $(F, \alpha)
\Rightarrow (G, \beta)$ between lax morphisms is a natural
transformation $\gamma \colon F \Rightarrow G$ rendering commutative
the square above right for each $f \colon A \to B$ in $\C$. Algebraic
weak factorisation systems, lax morphisms and transformations
constitute a $2$-category $\Lax $. Dually, an \emph{oplax morphism} of \awfs involves maps
$\alpha_f$ with the opposing orientation to~\eqref{eq:laxmorphism},
and with the induced $( \alpha, 1)$ and $(1, \alpha)$ now being
\emph{oplax} monad and comonad morphisms over $F^\atwo$. With a
similar adaptation on $2$-cells, this yields a $2$-category
$\Oplax$.

\subsection{Adjunctions of AWFS}
\label{sec:adjunctions}
Given \awfs $(\mathsf L, \mathsf R)$ and $(\mathsf L', \mathsf R')$ on
categories $\C$ and $\D$, and an adjunction $F \dashv G \colon \C \to
\D$, there is a bijection between $2$-cells $\alpha$ exhibiting $G$ as
a lax \awfs morphism and $2$-cells $\beta$ exhibiting $F$ as oplax;
this is the \emph{doctrinal adjunction} of~\cite{Kelly1974Doctrinal}.
From $\alpha$ we
determine the components of 
$\beta$ by
\[
\cd{
\beta_f = FE'f \xrightarrow{FE'(\eta_A,\eta_B)} FE'GFf
\xrightarrow{F\alpha_{Ff}}  FGEFf \xrightarrow{\epsilon_{EFf}} EFf
\rlap{ ,}
}
\]
so that $\beta$ is the \emph{mate}~\cite{Kelly1974Review} of $\alpha$
under the adjunctions $F \dashv G$ and $F^\atwo \dashv G^\atwo$. The
functoriality of this correspondence is expressed through an
identity-on-objects isomorphism of $2$-categories
$\Radj^{\co\op} \cong \Ladj$,
where $\Radj$ is defined identically to $\Lax$ except that its
$1$-cells come equipped with chosen left adjoints, and where $\Ladj$
is defined from $\Oplax$ dually.
By an \emph{adjunction of \awfs}, we mean a morphism of one of these
isomorphic $2$-categories; thus, a pair of a lax \awfs
morphism $(G, \alpha)$ and an oplax \awfs morphism $(F, \beta)$ whose
underlying functors are adjoint, and whose $2$-cell data determine
each other by mateship. In this situation, an easy calculation shows
that
\looseness=-1
\begin{equation}
\Phi_{\alg f, G \alg g}(u,v) = \Phi_{F \alg f, \alg g}(\bar u,\bar
v)\label{eq:46}
\end{equation}
for any $\mathsf L'$-coalgebra $\alg f$, any $\mathsf R$-algebra
$\alg g$, and any $(u,v) \colon f \to Gg$ in $\D^\atwo$ with adjoint
transpose $(\bar u, \bar v) \colon Ff \to g$ in $\C^\atwo$.

\subsection{Semantics $2$-functors}
\label{sec:semantics-2-functors}
A lax morphism of \awfs
$(F, \alpha) \colon (\C, \mathsf L, \mathsf R) \to (\C', \mathsf L',
\mathsf R')$
has an underlying lax monad morphism, yielding as
in~\cite[Lemma~1]{Johnstone1975Adjoint} a lifted functor as to the
left in
\begin{equation}
\cd{
\Alg{R} \ar[d]_{U^\mathsf R} \ar@{.>}[r]^{\bar F} & \Alg{R'}
\ar[d]^{U^\mathsf{R'}} \\
\C^\atwo \ar[r]_{F^\atwo} & \D^\atwo
} \qquad \qquad \qquad
\cd{
\DAlg{R} \ar[d]_{U^\mathsf R} \ar@{.>}[r]^{\bar F} & \DAlg{R'}
\ar[d]^{U^\mathsf{R'}} \\
\Sq{\C} \ar[r]_{\Sq{F}} & \Sq{\D}\rlap{ ,}
}\label{eq:17}
\end{equation}
whose action on algebras we will abusively denote by $\alg g \mapsto
F\alg g$. A short calculation from the fact that $(1, \alpha)$ is a
lax comonad morphism shows that $ F\Phi_{\fl f, \alg g}(u,v) \cdot
\alpha_f = \Phi_{\flp{Ff}, F\alg g}(Fu, Fv \cdot \alpha_f)$ for each
$\mathsf R$-algebra $\alg g$. From this and~\eqref{eq:7} it follows
that $\Phi_{\flp{Ff}, F\alg h \cdot F \alg g}(Fu, Fv \cdot
\alpha_{hg}) = \Phi_{\flp{Ff}, F(\alg h \cdot \alg g)}(Fu, Fv \cdot
\alpha_{hg})$ for all composable $\mathsf R$-algebras $\alg g$ and
$\alg h$; now taking $f = hg$ and $(u,v) = (1, \r{hg})$ and
applying~\eqref{eq:9z}, we conclude that $F\alg h \cdot F \alg g =
F(\alg h \cdot \alg g)$. Thus the lifted functor on the left
of~\eqref{eq:17} preserves algebra composition; it must also preserve
the (unique) algebra structures on identities, and so underlies a
double functor as on the right.
Moreover, each $2$-cell $\gamma \colon (F, \alpha) \to (G, \beta)$ in
$\Lax $ has an underlying lax monad transformation, so that $\gamma
\colon F \to G$ may be lifted to a transformation on categories of
algebras, which---by concreteness---lifts further to a horizontal
transformation on double categories. 

The preceding constructions have evident duals involving $\Oplax$ and
coalgebras; and in this way, we obtain the left and right
\emph{semantics $2$-functors}:
\begin{equation}
  \label{eq:18}
\DCoalg{(\thg)} \colon \Oplax 
\to \DBL^\atwo  \qquad   \qquad
\DAlg{(\thg)} \colon \Lax
\to \DBL^\atwo\rlap{ ,}
\end{equation}
where $\DBL$ denotes the $2$-category of double categories, double
functors and horizontal transformations. 
The following result is now~\cite[Lemma~6.9]{Riehl2011Algebraic}:

\begin{Prop}\label{prop:ff2semantics}
  The $2$-functors $\DAlg{(\thg)}$ and $\DCoalg{(\thg)}$ are $2$-fully faithful.
\end{Prop}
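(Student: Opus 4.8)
The plan is to show that for \awfs $(\C,\mathsf L,\mathsf R)$ and $(\D,\mathsf L',\mathsf R')$ the functor $\Lax\big((\C,\mathsf L,\mathsf R),(\D,\mathsf L',\mathsf R')\big)\to\DBL^\atwo(U^\mathsf R,U^\mathsf{R'})$ induced by $\DAlg{(\thg)}$ is an isomorphism of categories, i.e.\ a bijection on $1$-cells and on $2$-cells. Passing to opposite categories interchanges $\mathsf L$-coalgebras with $\mathsf R$-algebras and the $2$-categories $\Oplax$ and $\Lax$, and carries $\DCoalg{(\thg)}$ to $\DAlg{(\thg)}$ up to the evident dualisation of $\DBL$; so it suffices to treat $\DAlg{(\thg)}$.

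For the bijection on $1$-cells I start from a $1$-cell of $\DBL^\atwo$, a pair of double functors $P\colon\DAlg R\to\DAlg{R'}$ and $Q\colon\Sq\C\to\Sq\D$ with $U^\mathsf{R'}P=QU^\mathsf R$. First I recover an ordinary functor $F\colon\C\to\D$: a double functor out of $\Sq\C$ is determined by its effect on objects and horizontal arrows, since its vertical and square components are then forced (for instance the commuting square whose top and left edges are $f$ and whose bottom and right edges are identities shows that $Q$ sends the vertical arrow $f$ to $Ff$); hence $Q=\Sq F$ for a unique $F$. Because $U^\mathsf R$ and $U^\mathsf{R'}$ are the identity on objects and horizontal arrows, $P$ agrees with $F$ there, and its action on vertical arrows and squares is a lift $\bar F\colon\Alg R\to\Alg{R'}$ of $F^\atwo$ through the forgetful functors, exactly as in \eqref{eq:17}.

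The next step is to decode this lift. By Johnstone's lifting lemma \cite{Johnstone1975Adjoint}, lifts $\bar F$ of $F^\atwo$ correspond bijectively to lax monad morphism structures $(\alpha,1)\colon R'F^\atwo\Rightarrow F^\atwo R$; as $\mathsf R$ is a monad over the codomain, such an $\alpha$ is precisely a natural family $E'Ff\to FEf$ making the triangles of \eqref{eq:laxmorphism} commute and satisfying the monad-functor axioms. It remains to match the residual datum---that $\bar F$ extends to the \emph{double} functor $P$, i.e.\ preserves the composition law and units of $\DAlg R$---with the requirement that $(1,\alpha)$ be a comonad opfunctor $\mathsf L\to\mathsf L'$; together these say exactly that $(F,\alpha)$ is a lax \awfs morphism, and $(P,Q)\mapsto(F,\alpha)$ is then inverse to $\DAlg{(\thg)}$ on $1$-cells. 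One implication is already available: the computation recalled in Section~\ref{sec:semantics-2-functors} shows that a comonad opfunctor structure forces $\bar F$ to preserve composition. The main obstacle is the converse. Here the idea is that the composition law on $\mathsf R$-algebras encodes the comultiplication $\c f$ of $\mathsf L$ through the canonical squares \eqref{eq:13}: applying $P$ to the right-hand square of \eqref{eq:13}, whose horizontal datum is $\c f$, and using that $P$ preserves both this square and the composite $\fr f\cdot\fr{\l f}$ it witnesses, I expect to read off precisely the comultiplication axiom relating $\alpha$, $\c f$ and $\cp{f}$, so that $(1,\alpha)$ is a comonad opfunctor.

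For the bijection on $2$-cells I take lax morphisms $(F,\alpha),(G,\beta)$ and a modification between their images under $\DAlg{(\thg)}$, that is horizontal transformations $\sigma\colon P\Rightarrow P'$ and $\tau\colon\Sq F\Rightarrow\Sq G$ with $U^\mathsf{R'}\sigma=\tau U^\mathsf R$. As before the structure of $\Sq\D$ forces $\tau=\Sq\gamma$ for a unique natural transformation $\gamma\colon F\Rightarrow G$. The object-components $\sigma_A$ are $\mathsf R'$-algebra structures on $\gamma_A$, with underlying arrows fixed as $\gamma_A$ by the modification condition; I will show that these algebra structures, and the square-components of $\sigma$, are uniquely determined by $\gamma$ using faithfulness of $U^\mathsf{R'}$ (concreteness) together with the vertical-composition compatibility of $\sigma$ evaluated at free algebras, and that such a lift $\sigma$ exists precisely when $\gamma$ satisfies the $2$-cell axiom on the right of \eqref{eq:laxmorphism}. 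This identifies the $2$-cells of $\DBL^\atwo$ with \awfs $2$-cells and completes the proof.
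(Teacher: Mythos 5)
Your handling of the $1$-cells is essentially the paper's own argument: duality to reduce to $\DAlg{(\thg)}$; the observation that $Q = \Sq{F}$; the identification, via the lifting correspondence for monad morphisms, of the vertical-arrow component of $P$ with a lax monad morphism $(\alpha,1)$ over $F^\atwo$; and the derivation of the comultiplication compatibility of $(1,\alpha)$ from double functoriality of $P$ applied to the right-hand square of~\eqref{eq:13}. The step you leave as ``I expect to read off'' is exactly where the paper does its work: it pastes the $P$-image of that square together with the corresponding square~\eqref{eq:13} for $\mathsf R'$ at $Ff$ and several lax-monad-morphism squares, so as to obtain two squares of $\DAlg{R'}$ out of the \emph{free} algebra $\frp{Ff}$; these agree after precomposition with the unit $(\lp{Ff},1) \colon Ff \to \rp{Ff}$, hence coincide by freeness of $\frp{Ff}$, and equality of their domain components is precisely the comultiplication axiom. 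So that half is sound, if compressed.

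The $2$-cell half contains a genuine error. You take the object-components $\sigma_A$ of a horizontal transformation $\sigma \colon P \Rightarrow P'$ to be ``$\mathsf R'$-algebra structures on $\gamma_A$''; but the components of a horizontal transformation at objects are \emph{horizontal} arrows of the codomain, not vertical ones. Concretely, $\sigma$ assigns to each object $A$ a horizontal arrow of $\DAlg{R'}$---just a morphism $FA \to GA$ of $\D$, which concreteness forces to equal $\gamma_A$---and to each vertical arrow $\alg g \colon A \to B$ of $\DAlg{R}$ a square of $\DAlg{R'}$, i.e.\ an $\mathsf R'$-algebra map $(\gamma_A,\gamma_B) \colon \bar F\alg g \to \bar G\alg g$. (This is the reading forced by the construction in Section~\ref{sec:semantics-2-functors}, where the lift $\bar\gamma$ of $\gamma$ to algebra categories supplies exactly these square-components and no more; the paper's phrase ``internal natural transformations'' should not be read as transformations valued in $\A_1$.) Under your reading, the claim you set out to prove is false: take $\C = \D = \cat{Ab}$ with the split-epi \awfs and the identity lax morphism on it; the zero transformation $\gamma = 0 \colon 1_{\cat{Ab}} \Rightarrow 1_{\cat{Ab}}$ satisfies the $2$-cell axiom of~\eqref{eq:laxmorphism} (both sides are zero maps), yet for $A \neq 0$ the component $0 \colon A \to A$ admits no section and hence no $\mathsf R'$-algebra structure, so no family $\sigma_A$ of the kind you require exists, and your asserted equivalence ``such a lift $\sigma$ exists precisely when $\gamma$ satisfies the axiom'' breaks down. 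With the correct reading, the $2$-cell comparison reduces to the standard fact that $(\gamma_A,\gamma_B)$ is an algebra map for every $\alg g$ if and only if $\gamma^\atwo$ is a lax monad transformation, if and only if the axiom of~\eqref{eq:laxmorphism} holds; the paper instead disposes of this step by factoring $\DAlg{(\thg)}$ through $\cat{MND}_\mathrm{lax}$ and $\Alg{(\thg)}$ and observing that the other three edges of the resulting square of $2$-functors are locally fully faithful on the relevant objects.
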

\begin{proof}
  By duality, we need only deal with the case of $\DAlg{(\thg)}$. Note
  that each double functor $\Sq{\C} \to \Sq{\D}$ must be of the form
  $\Sq{F}$ for some $F \colon \C \to \D$; thus given $(\C, \mathsf L,
  \mathsf R)$ and $(\D, \mathsf L', \mathsf R')$ in $\Lax$, a morphism
  between their images in $\DBL^\atwo$ amounts to a square as on the
  right of~\eqref{eq:17}. Each such has its underlying action on
  vertical arrows and squares as on the left in~\eqref{eq:17}, and so
  must be induced by a unique lax monad morphism $\gamma \colon
  \mathsf R \to \mathsf R'$ over $F^\atwo$. As $\mathsf R$ and
  $\mathsf R'$ are monads over the codomain functor, we must have
  $\gamma = (\alpha, 1)$ for natural maps $\alpha_f$ as
  in~\eqref{eq:laxmorphism}. It remains to show that $(1, \alpha)$ is
  a lax comonad morphism $\mathsf L \to \mathsf L'$ over $F^\atwo$.
  The compatibilities in~\eqref{eq:laxmorphism} already show that
  $(1,\alpha)$ commutes with counits; we need to establish the same
  for the comultiplications. Consider the diagrams:
\begin{gather*}
  \cd[@C-0.7em]{ E'Ff \ar[dd]^{\frp {Ff}} \ar[r]^-{\cp {Ff}} & \sh{r}{0.2em}E'\lp
    Ff \ar[d]|{\frp {\lp {Ff}}} \ar[rr]^{E'(1, \alpha_f)}& & E'F\l f
    \ar[d]|{\frp {F \l f}} \ar[r]^{\alpha_{\l f}} & FE\l f \ar[d]_{F
      \fr {\l f}}
    \\
    & E'Ff \ar[rr]_{\alpha_f} \ar[d]|{\frp {Ff}} & & FEf \ar[d]|{F\fr f}
    \ar[r]_1 & FEf
    \ar[d]_{F \fr f}\\
    FB \ar[r]_{1} &FB \ar[rr]_{1} & & FB \ar[r]_{1} & FB } \ \quad
  \cd[@C-0.25em]{ E'Ff \ar[dd]^{\frp {Ff}} \ar[r]^{\alpha_f} & FEf
    \ar[dd]|{F \fr f} \ar[r]^{F\c f} &
    FE\l f \ar[d]_{F \fr{ \l f}} \\
    & & FEf
    \ar[d]_{F \fr f} \\
    FB \ar[r]_1 & FB \ar[r]_{1} & FB\rlap{ .} }
\end{gather*}

Every region here is a square of $\DAlg{R'}$: the leftmost as it is of
the form~\eqref{eq:13}, the rightmost as it is the image under the
lifted double functor of another such square in $\DAlg{R}$, and all
the rest because $(\alpha, 1)$ is a lax monad morphism. So the
exterior squares are also squares in $\DAlg{R'}$; but as both have the
same composite $F \l{\l f}$ with the unit morphism $(\lp F f, 1)
\colon F f \to \rp F f$, they must, by freeness of $\frp{Ff}$,
coincide; now the equality of their domain-components expresses
precisely the required compatibility with comultiplications. 

This completes the proof of full fidelity on $1$-cells; on $2$-cells,
it is easy to see that in the commuting diagram of $2$-functors
\[
\cd[@C+1.5em]{
\Lax  \ar[r]^-{\DAlg{(\thg)}} \ar[d]_{(\C, \mathsf L,
  \mathsf R) \mapsto (\C^\atwo, \mathsf R)} & \DBL^\atwo \ar[d]^{(\thg)_1} \\
\cat{MND}_\mathrm{lax} \ar[r]^-{\Alg{(\thg)}} & \CAT^\atwo
}
\] 
the left and bottom edges are locally fully faithful, and that the
right edge is locally fully faithful on the concrete double categories
in the image of $\DAlg{(\thg)}$; whence the top edge is also locally
fully faithful.
\end{proof}
\subsection{Orthogonal factorisation systems} Recall that an
\emph{orthogonal factorisation system}~\cite{Freyd1972Categories} is a
weak factorisation systems $(\L, \R)$ wherein liftings~\eqref{eq:1} of
$\L$-maps against $\R$-maps are unique. The following result
characterises those \awfs arising from orthogonal factorisation
systems: it improves on Theorem~3.2 of~\cite{Grandis2006Natural} by
requiring idempotency of only \emph{one} of $\mathsf L$ or $\mathsf
R$. This resolves the open question posed in Remark~3.3(a) of
\emph{ibid}.

\begin{Prop}
  \label{prop:8}
  Let $(\mathsf L, \mathsf R)$ be an \awfs on $\C$. The following are
  equivalent:
  \begin{enumerate}[(i)]
  \item $\Coalg{L} \to \C^\atwo$ is fully faithful;
  \item $\mathsf L$ is an idempotent comonad;
  \item For each $\mathsf L$-coalgebra
  $\alg f \colon A \to B$, there is a coalgebra map
  $(f, 1) \colon \alg f \to \alg 1_B$;
  \item $\Alg{R} \to \C^\atwo$ is fully faithful;
  \item $\mathsf R$ is an idempotent monad;
  \item For each $\mathsf R$-algebra
  $\alg g \colon C \to D$, there is an algebra map
  $(1, g) \colon \alg 1_C \to \alg g$;
  \item Liftings of $\mathsf L$-coalgebras against $\mathsf
    R$-algebras are unique;
  \item The underlying weak factorisation system $(\L, \R)$ is 
    orthogonal.
  \end{enumerate}
  Under these circumstances, moreover, the \awfs $(\mathsf L, \mathsf
  R)$ is determined up to isomorphism by the underlying $(\L, \R)$.
\end{Prop}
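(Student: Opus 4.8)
The plan is to exploit the evident self-duality of the statement: conditions (i)--(iii) are the formal duals of (iv)--(vi), while (vii) and (viii) are self-dual. It therefore suffices to relate (i), (ii), (iii) to (vii), to prove (vii) $\Leftrightarrow$ (viii), and then to read everything through the dual \awfs on $\C^\op$ (under which $\mathsf L$-coalgebras become $\mathsf R$-algebras and conversely) to fold in (iv), (v), (vi). The equivalence (i) $\Leftrightarrow$ (ii) I would simply quote from the standard theory of idempotent comonads: a comonad is idempotent exactly when the forgetful functor from its coalgebras is fully faithful. For (ii) $\Rightarrow$ (iii), idempotency makes $\Coalg{L} \to \C^\atwo$ full, so the evident commuting square $(f,1) \colon f \to 1_B$ between the underlying arrows of the coalgebras $\alg f$ and $\alg 1_B$ is automatically a coalgebra map.

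The first substantive step is (iii) $\Rightarrow$ (vii), for which I would show that, granted (iii), every diagonal filler agrees with the canonical one. Given a coalgebra $\alg f = (f,s) \colon A \to B$, an algebra $\alg g$, a square $(u,v) \colon f \to g$ and an arbitrary filler $w$ (so $wf = u$ and $gw = v$), regard $(w,v) \colon 1_B \to g$ as a square over $\alg 1_B$; since any filler of a square whose left leg is an identity must equal its top arrow, the canonical filler satisfies $\Phi_{\alg 1_B, \alg g}(w,v) = w$. Transporting along the coalgebra map $(f,1) \colon \alg f \to \alg 1_B$ of (iii) by the naturality \eqref{eq:4} of the canonical fillers gives $\Phi_{\alg f, \alg g}(u,v) = \Phi_{\alg f, \alg g}(wf,v) = \Phi_{\alg 1_B, \alg g}(w,v) = w$, so fillers are unique.

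The main obstacle is the converse (vii) $\Rightarrow$ (i), where uniqueness of liftings must be converted into fullness of $\Coalg{L} \to \C^\atwo$. Faithfulness being automatic, I must show that any $\C^\atwo$-map $(a,b) \colon f \to f'$ between the underlying arrows of coalgebras $\alg f = (f,s)$ and $\alg{f'} = (f',s')$ obeys the coalgebra-morphism equation $E(a,b) \cdot s = s' \cdot b$. The idea is to exhibit both sides as fillers of a single lifting problem and then invoke (vii). Using the coalgebra relations $sf = \l f$, $\r f \cdot s = 1$ (and their primed versions) together with the functoriality of $E$, I would check that both $E(a,b)\cdot s$ and $s'\cdot b$ solve the square $(\l{f'}\cdot a,\, b) \colon f \to \r{f'}$ for the coalgebra $\alg f$ against the free algebra $\fr{f'}$; uniqueness then forces them equal. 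This is precisely where the result improves on \cite{Grandis2006Natural}: it manufactures idempotency of $\mathsf L$ out of a pure lifting condition, and, read through its dual, lets (vii) deliver idempotency of $\mathsf R$ as well, so that assuming just one of the two suffices.

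Finally, for (vii) $\Leftrightarrow$ (viii): since any map admitting $\mathsf L$-coalgebra (resp.\ $\mathsf R$-algebra) structure lies in $\L$ (resp.\ $\R$), orthogonality of $(\L,\R)$ specialises at once to uniqueness of liftings of coalgebras against algebras, giving (viii) $\Rightarrow$ (vii); conversely, as the class of left maps lifting uniquely against a fixed right map is closed under retracts (and dually), the uniqueness asserted by (vii) propagates from the generating coalgebras and algebras to the retract-closures $\L$ and $\R$, yielding (viii). For the determinacy clause I would then observe that under these hypotheses $(\L,\R)$ is an orthogonal factorisation system, so each $f$ has an $(\L,\R)$-factorisation unique up to a unique isomorphism, whence functoriality makes $(\l{}, E, \r{})$ unique up to canonical isomorphism; and since $\mathsf L$, $\mathsf R$ are idempotent, their comultiplication $\c{}$ and multiplication $\m{}$ are the forced canonical isomorphisms, so the whole comonad--monad structure, and hence $(\mathsf L, \mathsf R)$ itself, is recovered up to isomorphism from $(\L,\R)$.
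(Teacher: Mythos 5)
Your proof is correct. Its skeleton coincides with the paper's---the standard facts about idempotent (co)monads for (i) $\Leftrightarrow$ (ii), fullness giving (iii), an argument for (iii) $\Rightarrow$ (vii) that is literally the paper's (factor an arbitrary filler's square through the coalgebra map $(f,1) \colon \alg f \to \alg 1_B$ and apply naturality~\eqref{eq:4}), and the same duality trick to fold in (iv)--(vi)---but the two substantive converse steps are done by genuinely different arguments. For the return from (vii), the paper proves (vii) $\Rightarrow$ (iv) by citing Lemma~\ref{lem:2}: an algebra map is exactly a square commuting with the coalgebra lifting operations, and under (vii) every square $(c,d) \colon g \to h$ commutes with them, since both candidate fillers fill the same square. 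You instead prove (vii) $\Rightarrow$ (i) by hand: for $(a,b) \colon f \to f'$ between the underlying arrows of coalgebras $(f,s)$ and $(f',s')$, both $E(a,b) \cdot s$ and $s' \cdot b$ fill the square $(\l{f'} \cdot a,\, b) \colon f \to \r{f'}$ against the free algebra $\fr{f'}$---the verification via $sf = \l f$, $\r f \cdot s = 1$ and naturality of the factorisation checks out exactly as you say---so (vii) forces the coalgebra-map equation $E(a,b) \cdot s = s' \cdot b$. Your route is self-contained and avoids Lemma~\ref{lem:2}; the paper's is shorter because that lemma is already on the table. Likewise for (vii) $\Rightarrow$ (viii): the paper uses (i) and (iv) to split idempotents in $\Coalg{L}$ and $\Alg{R}$ and thereby show every $\L$-map (resp.\ $\R$-map) actually carries coalgebra (resp.\ algebra) structure, whereas you propagate uniqueness itself through retracts, via the correct observation that the maps lifting uniquely against a fixed map form a retract-closed class; your version needs only (vii) and not the fullness conditions. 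Finally, for the determinacy clause the paper reconstructs $\Coalg{L}$ and $\Alg{R}$ as the full subcategories of $\C^\atwo$ on the $\L$- and $\R$-maps and recovers the (co)monads from these, while you recover the functorial factorisation from uniqueness of orthogonal factorisations and then note that idempotent (co)monad structure and the distributive law are forced; both reconstructions are sound.
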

\begin{proof}
  (i) $\Rightarrow$ (ii) $\Rightarrow$ (i) by standard properties of
  idempotent comonads, and (i) $\Rightarrow$ (iii) is trivial. We next
  prove (iii) $\Rightarrow$ (vii). So given $\alg f \in \Coalg{L}$ and
  $\alg g \in \Alg{R}$, we must show that any diagonal filler $j$ for
  a square $(u,v) \colon f \to g$, as on the left below, is equal to
  the canonical filler $\Phi_{\alg f, \alg g}(u,v)$.
\[
\cd{A \ar[d]_f \ar[r]^u & C \ar[d]^g \\
B \ar@{.>}[ur]|j \ar[r]_{v} & D} \qquad = \qquad
\cd{
A \ar[d]_f \ar[r]^f & B \ar[d]_{1} \ar[r]^j & C \ar[d]^g \\
B \ar[r]_1 & B \ar[r]_{v} & D\rlap{ .}
}
\]
So factorise $(u,v)$ as on the right above; by (iii), the left-hand
square is a coalgebra map $\alg f \to \alg 1_B$, whence by
naturality~\eqref{eq:4} we have $j = \Phi_{\alg{1}_B, \alg g}(j,v) =
\Phi_{\alg f, \alg g}(u,v)$ as required. We next prove (vii)
$\Rightarrow$ (iv); so for algebras $\alg g$ and $\alg h$, we must
show that each $(c,d) \colon g \to h$ underlies an algebra map $\alg g
\to \alg h$. By Lemma~\ref{lem:2}, it suffices to show that $(c,d)$
commutes with the coalgebra lifting functions; in other words, that
for each coalgebra $\alg f$ and each $(u,v) \colon f \to g$, we have
$c \cdot \Phi_{\alg f,\alg g}(u,v) = \Phi_{\alg f,\alg h}(cu,dv)$.
This is so by (vii) since both these maps fill the square $(cu,dv)
\colon f \to h$.

Dual arguments now prove that (iv) $\Rightarrow$ (v) $\Rightarrow$
(iv) $\Rightarrow$ (vi) $\Rightarrow$ (vii) $\Rightarrow$ (i); it
remains to show that (i)--(vii) are equivalent to (viii). Clearly
(viii) $\Rightarrow$ (vii); on the other hand, if (i) and (iv) hold,
then any retract of an $\mathsf L$-coalgebra or $\mathsf R$-algebra is
again a coalgebra or algebra---being given by the splitting of an
idempotent in $\Alg{R}$ or $\Coalg{L}$---so that any map in $\L$ or
$\R$ is the underlying map of an $\mathsf L$-coalgebra or $\mathsf
R$-algebra; whence by (vii) liftings of $\L$-maps against $\R$-maps
are unique.

Finally, if (i)--(viii) hold, then we can reconstruct $\mathsf L$ and
$\mathsf R$ up to isomorphism from $\Alg{R} \to \C^\atwo$ and
$\Coalg{L} \to \C^\atwo$, and can reconstruct these in turn from $\L$
and $\R$ as the full subcategories of $\C^\atwo$ on the $\L$-maps and
the $\R$-maps.
\end{proof}

\section{A Beck theorem for awfs}
\label{sec:beck}
In this section, we give our first main
result---Theorem~\ref{thm:recognition} below---which provides an
elementary characterisation of the concrete double categories in the
essential image of the semantics $2$-functors~\eqref{eq:18}. This will
allow us to construct an \awfs simply by exhibiting a double category
of an appropriate form to be one's double category of algebras or
coalgebras. As explained in the introduction, the essential images of
the corresponding semantics $2$-functors for monads and comonads are
characterised by Beck's (co)monadicity theorem, and so we term our
result a ``Beck theorem'' for algebraic weak factorisation systems.

\subsection{Reconstruction}
\label{sec:reconstruction}
There are two main aspects to the Beck theorem. The first is the
following reconstruction result; in the special case where $\mathsf R$
is \emph{idempotent}, this
is~\cite[Proposition~5.9]{Im1986On-classes}, while the general case is
given as~\cite[Theorem~4.15]{Barthel2013On-the-construction}; for the
sake of a self-contained presentation, we include a proof here, which
improves on that of~\cite{Barthel2013On-the-construction} only in
trivial ways.

\begin{Prop}\label{prop:reconstruct}
  Let $\mathsf R \colon \C^\atwo \to \C^\atwo$ be a monad over the
  codomain functor. The right semantics $2$-functor~\eqref{eq:18}
  induces a bijection between extensions of $\mathsf R$ to an
  algebraic weak factorisation system $(\mathsf L, \mathsf R)$ and
  extensions of the diagram
  \begin{equation}
    \cd[@C+1em]{
      \Alg{R} \ar@<-3pt>[d]_{dU^\mathsf R}\ar@<3pt>[d]^{cU^\mathsf R} \ar[r]^{U^\mathsf R} &
      \C^\atwo
      \ar@<-3pt>[d]_d\ar@<3pt>[d]^c \\
      \C \ar[r]^1 & \C  }\label{eq:14}
  \end{equation}
  to a concrete double category over $\C$.
\end{Prop}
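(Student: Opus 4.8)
The plan is to treat the forward assignment as essentially given and to concentrate on building its inverse. Since the functorial factorisation $(\l{}, E, \r{})$ and the pair $(L, \epsilon)$ are already determined by the monad $\mathsf R$ over the codomain functor, an extension of $\mathsf R$ to an \awfs amounts precisely to a choice of comultiplication $\Delta$---equivalently, components $\c f \colon Ef \to E\l f$---making $\mathsf L = (L, \epsilon, \Delta)$ a comonad and satisfying the distributive-law axiom~\eqref{eq:10z}. The right semantics $2$-functor sends such an \awfs to $U^\mathsf R \colon \DAlg R \to \Sq{\C}$, and restricting this double functor to its source, target and forgetful data recovers exactly~\eqref{eq:14}; so its image is a concrete double category extending~\eqref{eq:14}. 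This defines the map whose bijectivity we must establish.

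First I would reconstruct $\Delta$ from a concrete double category. Given such a structure, with vertical composition written $\bullet$, the free algebras $\fr{\l f} \colon E\l f \to Ef$ and $\fr f \colon Ef \to B$ are composable, and their composite $\fr f \bullet \fr{\l f}$ is an $\mathsf R$-algebra on $\r f \cdot \r{\l f}$. Since $\r f \cdot \r{\l f} \cdot \l{\l f} = \r f \cdot \l f = f$, the pair $(\l{\l f}, 1_B)$ is a map $f \to \r f \cdot \r{\l f}$ in $\C^\atwo$, so the universal property~\eqref{eq:5} of the free algebra $\fr f$ yields a unique $\c f \colon Ef \to E\l f$ for which $(\c f, 1)$ is an algebra map $\fr f \to \fr f \bullet \fr{\l f}$ and $\c f \cdot \l f = \l{\l f}$; naturality of $\c f$ in $f$ follows from this universal property together with the functoriality of $\bullet$. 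That this inverts the forward map on comultiplications is immediate: for the $\Delta$ of an actual \awfs, the right-hand square of~\eqref{eq:13} exhibits $(\c f, 1)$ as precisely such an algebra map, so uniqueness forces the reconstructed comultiplication to agree with the original. In particular the forward map is injective, and this reconstruction is its only candidate inverse.

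It then remains to verify two things: that the reconstructed $\c f$ genuinely defines an \awfs, and that the \awfs so obtained induces back the given composition $\bullet$. For the first, the counit and coassociativity conditions for $\Delta$ must be extracted from the unit and associativity laws of $\bullet$---the section identity $\r{\l f} \cdot \c f = 1$ and coassociativity being obtained by comparing, through the universal property of the relevant free algebras, the iterated composites of $\fr{\l{\l f}}$, $\fr{\l f}$ and $\fr f$---while the distributive-law axiom~\eqref{eq:10z} reduces to the assertion that a specific canonical square lies in $\DAlg R$, which holds by construction. For the second, I would compute the canonical lifting operation determined by the reconstructed $\Delta$ on a composite $\alg h \bullet \alg g$ and show that it agrees with the formula~\eqref{eq:7}; Lemma~\ref{lem:2} then identifies $\alg h \bullet \alg g$ with the composite $\alg h \cdot \alg g$ of the reconstructed \awfs, using~\eqref{eq:9z} to reduce the comparison to structure maps and functoriality of $\bullet$ to pass from free algebras to arbitrary ones.

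I expect the conversion of the associativity of $\bullet$ into the coassociativity of $\Delta$ to be the main obstacle, since it is exactly here that the double-category axioms, which live at the level of algebras and their composites, must be transported across the free-algebra universal property into a single equation between maps $Ef \to E\l{\l f}$; the remaining verifications are comparatively routine consequences of the unit laws, naturality, and~\eqref{eq:9z}.
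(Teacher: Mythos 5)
Your proposal is correct and follows essentially the same route as the paper's proof: you construct the comultiplication $\c f$ by applying the universal property~\eqref{eq:5} of $\fr f$ to the composite algebra $\fr f \bullet \fr{\l f}$, obtain uniqueness from~\eqref{eq:13}, and leave the comonad/distributivity verifications to a pasting argument, exactly as the paper does. The only cosmetic difference is the final step, where the paper identifies the induced composition with $\bullet$ directly via freeness of $\fr f$, whereas you route the same comparison through Lemma~\ref{lem:2} and the lifting operations; both reductions rest on the same use of~\eqref{eq:9z} and functoriality of the given composition.
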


\begin{proof}
  To give an extension of~\eqref{eq:14} to a concrete double category
  over ${\C}$ is to give a composition law $\alg h, \alg g \mapsto
  \alg h \star \alg g$ on $\mathsf R$-algebras which is associative,
  unital, and compatible with $\mathsf R$-algebra maps. For each such,
  we must exhibit a unique $(\mathsf L, \mathsf R)$ whose induced
  $\mathsf R$-algebra composition is $\star$. As in
  Section~\ref{sec:functorial-fact}, the monad $\mathsf R$ determines
  the $(L, \epsilon)$ underlying $\mathsf L$, and so we need only give
  the maps $\c f$ satisfying appropriate axioms. Consider the diagram
  on the left in:
  \begin{equation*}
    \cd[@-0.5em]{
      A\ar[rr]^{\l {\l f}} \ar[dd]_{f} \ar[dr]^{\l f} & & E\l f
      \ar[d]^{\r {\l f}}\\
      & Ef \ar[d]_{\r f}  \ar@{.>}[ur]|{\c f} & Ef \ar[d]^{\r f}\\
      B \ar[r]^{1} & B \ar[r]^{1} & B
    } \qquad \qquad
    \cd[@-0.35em]{
      A\ar[rr]^{f} \ar[dd]_{f} \ar[dr]^{\l f} & & B
      \ar[dd]^{1_B}\\
      & Ef \ar[d]_{\r f}  \ar@{.>}[ur]|{Rf} \\
      B \ar[r]^{1} & B \ar[r]^{1} & B\rlap{ .}
    }
\end{equation*}

The outer square commutes, and the right edge bears the $\mathsf
R$-algebra structure $\fr f \star \fr {\l f}$; now
applying~\eqref{eq:5} yields the map $\c f$, with the induced square
forming an $\mathsf R$-algebra morphism $\fr f \to \fr f \star \fr {\l
  f}$. If $\star$ did arise from an \awfs, this induced square would
be exactly the right square of~\eqref{eq:13}; whence these $\c f$'s
are the unique possible choice for $\mathsf L$'s comultiplication. By
pasting together appropriate squares as in the proof of
Proposition~\ref{prop:ff2semantics}---involving algebra squares as
on the left above, and also ones
$(Rf, 1) \colon \fr f \to \alg 1_B$ induced by universality as on the
right above---we may now check that these $\c f$'s satisfy the comonad
and distributivity axioms;
we thus have the desired $(\mathsf L, \mathsf R)$, and it remains only
to show that the composition law it induces coincides with
$\star$. So let $\alg g$ and $\alg h$ be composable $\mathsf
R$-algebras, write
$f = hg$ for the composite of the underlying maps, and consider the
left-hand diagram in:
\[
\cd[@-0.7em]{A\ar[rr]_{1} \ar[dd]_{\l f} \ar[dr]_{\l {\l f}} & & A \ar[d]^{g}\\
& E\l f \ar[d]_{\r {\l f}}  \ar@{.>}[ur]|{\exists ! \ell} & B \ar[dd]^h \\
Ef \ar[d]_{\r f} \ar[r]_1 & Ef \ar[d]_{\r f} \ar@{.>}[ur]|{\exists ! m}\\
C \ar[r]^{1} & C \ar[r]^{1} & C
} \qquad \quad \quad
\cd[@-0.5em]{
Ef \ar[dd]_{\r f} \ar[r]^{\c
    f} & E \l f \ar[d]_{\r {\l f}} \ar[r]^{\ell} & A \ar[d]^g \\
 & E f \ar[r]^m \ar[d]_{\r f} & B \ar[d]^h \\
C \ar[r]^1 & C \ar[r]^1 & C\rlap{ .}}
\]
The dotted maps $m$ and $\ell$ are obtained by successively
applying~\eqref{eq:5} to the maps $(g,1) \colon f \to h$ and $(1,m)
\colon \l f \to g$; thus the two squares on the far right above are
maps of $\mathsf R$-algebras $\fr {\l f} \to \alg g$ and $\fr f \to
\alg h$. The rectangle to the left of these squares is a map of
$\mathsf R$-algebras $\fr f \to \fr f \star \fr {\l f}$ by definition,
and also one $\fr f \to \fr f \cdot \fr {\l f}$, by~\eqref{eq:13}.
Thus the composite square can be seen both as a map $\fr f \to \fr {\l
  f} \cdot \fr f \to \alg h \cdot \alg g$ and as one $\fr f \to \fr
{\l f} \star \fr f \to \alg h \star \alg g$. Since precomposing
further with the unit $f \to \r f$ yields the identity square on $f =
hg$, we conclude by universality of $\fr f$ that $(\ell \c f, 1)$ is
both the $\mathsf R$-algebra structure of $\alg h \cdot \alg g$ and
that of $\alg h \star \alg g$, which thus coincide.
\end{proof}

\subsection{Monads over the codomain functor}
\label{sec:monads-over-codomain}
What is missing from the last result is a characterisation of
when a monad on an arrow category is over the codomain functor. Our
next result provides this; it generalises the characterisation
in~\cite[Proposition~5.1]{Im1986On-classes} of \emph{idempotent}
monads over the codomain functor.

\begin{Prop}\label{prop:overcodomain}
  A monad $\mathsf R$ on $\C^\atwo$ is isomorphic to one over the
  codomain functor if and only if:
  \begin{enumerate}[(a)]
  \item Each identity map has an $\mathsf
  R$-algebra structure $\alg 1_A$;
\item For each $f \colon A \to B$, there is an algebra map $(f,f)
  \colon \alg 1_A \to \alg 1_B$;
  \item For each $\mathsf R$-algebra
  $\alg g \colon A \to B$, there is an algebra map
  $(g, 1) \colon \alg g \to \alg 1_B$. 
  \end{enumerate}
Moreover, in this situation, the algebra structures on identity
arrows are unique.
\end{Prop}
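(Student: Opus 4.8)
The plan is to dispatch necessity by a direct verification and to concentrate the real work in sufficiency, where the crux is to prove that the codomain component of the unit is invertible.

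For necessity, suppose $\mathsf R$ is over the codomain functor. An $\mathsf R$-algebra structure on an identity $1_A$ is a map $R(1_A) \to 1_A$ in $\C^\atwo$ whose codomain component is forced to be $1_A$ by the unit axiom (since $\eta_{1_A}$ has identity codomain component), and commutativity then forces its domain component to be the second factor $\r{1_A} \colon E1_A \to A$. That $(1_A,\r{1_A})$ obeys the multiplication axiom is immediate: both sides reduce to a common map, one by naturality of the second-factor maps $\r f$, the other because the codomain component of $\mu_{1_A}$ is an identity. This yields (a) together with the uniqueness asserted in the ``moreover''; conditions (b) and (c) are checked the same way, each reducing to the commutativity of a square that holds precisely because $\mathsf R$ lies over $\mathrm{cod}$.

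For sufficiency, assume (a)--(c) and write $\pi_f \colon B \to \mathrm{cod}(Rf)$ for the codomain component of $\eta_f \colon f \to Rf$. I claim each $\pi_f$ is invertible; this is necessary, as a monad over the codomain functor has identity such components and isomorphic monads have isomorphic ones. First, $\pi_f$ is split monic: applying the universal property~\eqref{eq:5} of the free algebra $\fr f$ to the square $(f,1)\colon f \to 1_B$ produces (using (a) for the target) an algebra map $\fr f \to \alg 1_B$ which restricts along $\eta_f$ to $(f,1)$, so its codomain component $r_f$ satisfies $r_f \cdot \pi_f = 1$. The harder half is that $\pi_f$ is split epic, and this is the main obstacle. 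To settle it I compare two algebra maps $\fr f \rightrightarrows \alg 1_{\mathrm{cod}(Rf)}$: the one supplied by (c), and the composite of the map $\fr f \to \alg 1_B$ above with the algebra map $(\pi_f,\pi_f)\colon \alg 1_B \to \alg 1_{\mathrm{cod}(Rf)}$ furnished by (b). Both restrict along $\eta_f$ to the same square out of $f$, so by the uniqueness clause of~\eqref{eq:5} they coincide, and comparing codomain components gives $\pi_f \cdot r_f = 1$. Thus each $\pi_f$ is an isomorphism, natural in $f$ since $\eta$ is.

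It then remains to conjugate. I set $R'f := (\pi_f)^{-1}\cdot Rf \colon \mathrm{dom}(Rf) \to B$ and take $\theta_f := (1,\pi_f)\colon R'f \to Rf$, an isomorphism in $\C^\atwo$; transporting $\eta$ and $\mu$ across the natural isomorphism $\theta$ yields a monad $\mathsf R'$ isomorphic to $\mathsf R$, whose unit and multiplication have identity codomain components by construction, so $\mathsf R'$ is over the codomain functor. The uniqueness of the identity algebra structures then transports back across this isomorphism from the necessity half. Everything but the split-epi claim is bookkeeping with the monad axioms; invertibility of $\pi_f$ is exactly the obstruction to lying over $\mathrm{cod}$, and it is there that all three of (a), (b), (c) and the freeness of $\fr f$ are used together.
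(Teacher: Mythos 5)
Your proposal is correct and follows essentially the same route as the paper's own proof: the necessity half and the uniqueness clause come from the same direct computation, and your sufficiency argument---splitting the codomain unit component $\pi_f$ on one side via freeness of $\fr f$ mapped into $\alg 1_B$ using (a), and on the other side by comparing the two algebra maps $\fr f \to \alg 1_{\mathrm{cod}(Rf)}$ built from (b) and (c) via the uniqueness clause of~\eqref{eq:5}, then transporting the monad structure along the isomorphisms $(1,\pi_f)$---is precisely the paper's argument with its $s$ and $u$ renamed $\pi_f$ and $r_f$. Both treatments leave the same routine verifications implicit (invariance of (a)--(c) under monad isomorphism, and the check that the transported monad really lies over the codomain functor), so there is nothing to correct.
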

\begin{proof}
  As the properties (a)--(c) are clearly invariant under monad
  isomorphism, we may assume in the ``only if'' direction that
  $\mathsf R$ is a monad over the codomain functor; now a short
  calculation shows that $\mathbf 1_A = (1_A, \r{1_A}) \colon A \to A$
  is the unique $\mathsf R$-algebra structure on $1_A$, and that
  (b)--(c) are then satisfied. Conversely, let $\mathsf R$ satisfy
  (a)--(c), and consider the unit map $(r,s) \colon f \to \r f$, as on
  the left in:
\begin{equation*}
\cd{A\ar[d]_{f} \ar[r]^{r} & C \ar[d]^{\r f} \\
B \ar[r]^{s}  & D}
\quad \quad \quad
\cd{A\ar[d]_{f} \ar[r]^{f} & C \ar[d]^{1}\\
B \ar[r]^{1}  & B}
\ 
=
\ 
\cd{A \ar[d]_{f} \ar[r]^{r} & C \ar[d]^{\r f} \ar[r]^{t} & B
  \ar[d]^{1}\\
B \ar[r]^{s}  & D \ar[r]^{u} & B\rlap{ .} }
\end{equation*}
We claim that $s$ is invertible. Since $\r f$ underlies the
free $\mathsf R$-algebra on $f$, and $1_C$ has by (a) the algebra
structure $\alg 1_C$, the commuting square in the middle
above induces a unique algebra map $(t,u) \colon \fr f \to \alg 1_C$
making the rightmost diagram commute. In particular $us=1$, and it
remains to show that $su=1$. Using (b), we have the algebra map
$(s,s)\cdot(t,u) \colon \fr f \to \alg 1_B \to \alg 1_D$, and using
(c) we have $(\r f,1) \colon \fr f \to \alg 1_D$. These maps coincide
on precomposition with the unit $(r,s) \colon f \to \r f$, as on the
left of:
\begin{equation*}
\cd{A \ar[d]_{f} \ar[r]^{r} & C \ar[d]^{\r f} \ar[r]^{t} & B 
  \ar[d]^{1} \ar[r]^{s} & D \ar[d]^{1}\\
B \ar[r]^{s}  & D \ar[r]^{u} & B \ar[r]^{s} & D}
\ 
=
\ 
\cd{A \ar[d]_{f} \ar[r]^{r} & C \ar[d]^{\r f} \ar[r]^{\r f} & D
  \ar[d]^{\alg 1}\\
B \ar[r]^{s}  & D \ar[r]^{1} & D} \qquad \quad
\cd{C\ar[d]_{\r f} \ar[r]^{1} & C \ar[d]^{\rp f} \\
D \ar[r]^{u} & B}
\end{equation*}
So they must themselves agree; in particular, $su=1$ and $s$ and $u$
are isomorphisms. So defining $\rp f = u \r f$, we have isomorphisms
$\r f \cong \rp f$ for each $f \in \C^\atwo$ as on the right above;
now successively transporting the functor $R$ and the monad
structure thereon along these isomorphisms yields a monad $\mathsf
R' \cong \mathsf R$ over the codomain functor, as required.
\end{proof}

\subsection{The Beck theorem}
\label{sec:beck-theorem}
Combining the preceding two results, we obtain our first main theorem,
characterising the concrete double categories in the essential image
of the semantics $2$-functor $\DAlg{(\thg)}$. Of course, we have also
the dual result, which we do not trouble to state, characterising the
essential image of $\DCoalg{(\thg)}$.
\begin{Thm}\label{thm:recognition}
  The $2$-functor $\DAlg{(\thg)} \colon \Lax \to \DBL^\atwo$ has in
  its essential image exactly those concrete double categories $V
  \colon \mathbb A \to \Sq{\C}$ such that:
\begin{enumerate}[(i)]
\item The functor $V_1 \colon \A_{1} \to \C^{\atwo}$ on vertical
  arrows and squares is strictly monadic;
\item For each vertical arrow $\alg f \colon A \to B$ of $\mathbb
  A$, the following is a square of $\mathbb A$:
  \begin{equation}
\cd{ A\ar[r]^{f} \ar[d]_{\alg f} & B \ar[d]^{\alg 1} \\
B \ar[r]^{1} & B\rlap{ .}}\label{eq:16}
\end{equation}
\end{enumerate}
\end{Thm}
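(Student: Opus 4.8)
The plan is to prove both inclusions of the essential-image characterisation. By Proposition~\ref{prop:ff2semantics} the semantics $2$-functor is $2$-fully faithful, so its essential image consists exactly of the objects of $\DBL^\atwo$ isomorphic to one of the form $U^\mathsf R \colon \DAlg{R} \to \Sq{\C}$; since hypotheses (i) and (ii) are plainly invariant under isomorphism in $\DBL^\atwo$, it suffices to check that every $U^\mathsf R$ satisfies them, and conversely that a concrete $V$ satisfying them is of this form. For necessity I would note that (i) holds because $U^\mathsf R \colon \Alg{R} \to \C^\atwo$ is by construction the forgetful functor of the Eilenberg--Moore category of $\mathsf R$, hence strictly monadic; and that (ii) asserts exactly that the square~\eqref{eq:16}, read as the map of $\mathsf R$-algebras $(f,1) \colon \alg f \to \alg 1_B$, lies in $\DAlg{R}$. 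This map exists for any awfs by Proposition~\ref{prop:overcodomain}(c), whose hypotheses are met since the monad of an awfs is over the codomain functor.

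For sufficiency, let $V \colon \mathbb A \to \Sq{\C}$ be concrete and satisfy (i) and (ii). By (i) the functor $V_1$ exhibits $\A_1$ as the category of algebras for a monad $\mathsf R$ on $\C^\atwo$; identifying along this isomorphism I would take $\A_1 = \Alg{R}$ and $V_1 = U^\mathsf R$. The double-categorical data of $\mathbb A$ then equip $\Alg{R}$ with an identities functor $i \colon \C \to \Alg{R}$ and a composition $m$ lying over those of $\Sq{\C}$. Reading these off yields precisely the three hypotheses of Proposition~\ref{prop:overcodomain}: the objects $i(A)$ are $\mathsf R$-algebra structures $\alg 1_A$ on the identities (condition (a)), because $V_1 i$ is the units functor of $\Sq{\C}$; the action of $i$ on a horizontal arrow $f$ gives the algebra map $(f,f) \colon \alg 1_A \to \alg 1_B$ (condition (b)); and hypothesis (ii) supplies the algebra maps $(f,1) \colon \alg f \to \alg 1_B$ (condition (c)). Hence $\mathsf R$ is isomorphic to a monad over the codomain functor, and after transporting the monad---and with it the double-category structure---along this isomorphism, I may assume $\mathsf R$ itself is over the codomain functor.

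With this in hand, $\mathbb A$ is exactly an extension of the diagram~\eqref{eq:14} to a concrete double category over $\C$: its composition $m$ furnishes an associative, unital composition law on $\mathsf R$-algebras compatible with algebra maps, by the double-category axioms. Proposition~\ref{prop:reconstruct} then produces the unique awfs $(\mathsf L, \mathsf R)$ whose induced algebra composition is this $m$, and the associated double category $\DAlg{R}$ agrees with $\mathbb A$ on objects, vertical arrows, composition and units. Thus $V \cong U^\mathsf R = \DAlg{(\thg)}(\mathsf L, \mathsf R)$ lies in the essential image, completing the argument.

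I expect the main obstacle to be the bookkeeping around the two appeals to monadic reconstruction: ensuring that the identification $\A_1 \cong \Alg{R}$ coming from strict monadicity is genuinely compatible with the concrete structure (that $V_0 = 1$ and that $i$ and $m$ lie over $\Sq{\C}$), and that the transport along the isomorphism of Proposition~\ref{prop:overcodomain} carries the composition law faithfully, so that Proposition~\ref{prop:reconstruct} may be applied to the transported, honestly codomain-based, monad. The conceptual content is by contrast slight: everything reduces to matching the double-categorical units and composition against the monad-theoretic data already packaged in Propositions~\ref{prop:overcodomain} and~\ref{prop:reconstruct}.
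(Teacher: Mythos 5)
Your proposal is correct and follows essentially the same route as the paper's proof: necessity of (i) from strict monadicity of the Eilenberg--Moore forgetful functor and of (ii) from Proposition~\ref{prop:overcodomain}, then sufficiency by using strict monadicity to identify $\A_1$ with $\Alg{R}$, extracting conditions (a)--(c) of Proposition~\ref{prop:overcodomain} from the vertical identities and hypothesis (ii), transporting along the resulting isomorphism to a monad over the codomain functor, and concluding with Proposition~\ref{prop:reconstruct}. The bookkeeping points you flag are exactly those the paper handles implicitly in its phrase ``transporting the double category structure along the isomorphisms $\A_1 \cong \Alg{R} \cong \Alg{R'}$''.
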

\begin{proof}
  Any $U^\mathsf R \colon \DAlg{R} \to \Sq{\C}$ clearly has property
  (i) while property (ii) follows from
  Proposition~\ref{prop:overcodomain}. Suppose conversely that the
  concrete $V \colon \mathbb A \to \Sq{\C}$ satisfies (i) and (ii). As
  $V_1$ is strictly monadic, $\A_1$ is isomorphic over $\C^\atwo$ to
  the category of algebras for the monad $\mathsf R$ induced by $V_1$
  and its left adjoint. This $\mathsf R$ satisfies (a)--(c) of
  Proposition~\ref{prop:overcodomain}: (c) by virtue of (ii) above,
  and (a) and (b) using the vertical identities of the double category
  $\mathbb A$. Thus we have a monad $\mathsf R' \cong \mathsf R$ over
  the codomain functor; now transporting the double category structure
  of $V \colon \mathbb A \to \Sq{\C}$ along the isomorphisms $\A_1
  \cong \Alg{R} \cong \Alg{R'}$ and $\C^\atwo$ yields $V' \colon
  \mathbb A' \to \Sq{\C}$ which, by
  Proposition~\ref{prop:reconstruct}, is in the image of
  $\DAlg{(\thg)}$.
\end{proof}
We will call a concrete double category \emph{right-connected} if it
satisfies (ii) above, and \emph{monadic right-connected} if it
satisfies (i) and (ii). Now combining Theorem~\ref{thm:recognition}
with Proposition~\ref{prop:reconstruct} and the remarks preceding it
yields the following result; again, there is a dual form
which we do not state characterising $\Oplax$.
\begin{Cor}\label{cor:characterise}
  The $2$-category $\Lax $ is equivalent to the full sub-$2$-category
  of $\DBL^\atwo$ on the monadic right-connected concrete double
  categories.
\end{Cor}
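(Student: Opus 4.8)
The plan is to deduce the corollary formally from two ingredients already established: the $2$-full faithfulness of $\DAlg{(\thg)}$ recorded in Proposition~\ref{prop:ff2semantics}, and the identification of its essential image carried out in Theorem~\ref{thm:recognition}. I would rely on the standard fact from two-dimensional category theory that a $2$-functor which is locally an equivalence of hom-categories and essentially surjective on objects is an equivalence of $2$-categories, together with the observation that any $2$-functor becomes essentially surjective once its codomain is cut down to the full sub-$2$-category spanned by its essential image.

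First I would invoke Proposition~\ref{prop:ff2semantics} to record that each hom-functor of $\DAlg{(\thg)} \colon \Lax \to \DBL^\atwo$ is an isomorphism of categories---bijective on $1$-cells by full fidelity and bijective on $2$-cells by local full faithfulness---and in particular an equivalence. Since passing from $\DBL^\atwo$ to a full sub-$2$-category leaves the hom-categories between objects of that subcategory untouched, this local property is inherited by any corestriction of $\DAlg{(\thg)}$ whose codomain contains the image. Next I would appeal to Theorem~\ref{thm:recognition}, which exhibits the essential image of $\DAlg{(\thg)}$ as consisting of exactly the monadic right-connected concrete double categories; writing $\M$ for the full sub-$2$-category of $\DBL^\atwo$ they span, the corestriction $\DAlg{(\thg)} \colon \Lax \to \M$ is then essentially surjective on objects by the very definition of essential image.

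Combining these observations, the corestricted $2$-functor $\Lax \to \M$ is locally an equivalence of categories and essentially surjective on objects, hence an equivalence of $2$-categories, which is precisely the assertion of the corollary. I do not anticipate any genuine obstacle: the entire mathematical content sits in Proposition~\ref{prop:ff2semantics} and Theorem~\ref{thm:recognition}, and the only point deserving a moment's care is the bookkeeping that $\M$, being by Theorem~\ref{thm:recognition} literally the essential image, is automatically closed under equivalence in $\DBL^\atwo$, so that it is indeed the full sub-$2$-category on that image as the statement requires.
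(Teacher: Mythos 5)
Your proposal is correct and is essentially the paper's own argument: the corollary is presented there as an immediate formal consequence of combining the $2$-full fidelity of $\DAlg{(\thg)}$ (Proposition~\ref{prop:ff2semantics}) with the essential-image characterisation (Theorem~\ref{thm:recognition}, which rests on Proposition~\ref{prop:reconstruct}), assembled exactly as you do via the standard fact that a $2$-fully faithful, essentially surjective $2$-functor onto a full sub-$2$-category is a $2$-equivalence.

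One caution about your closing ``bookkeeping'' remark, though it does not damage the proof: it is \emph{not} true that the monadic right-connected concrete double categories are closed under equivalence in $\DBL^\atwo$. Concreteness (object component an identity, arrow component faithful) and strict monadicity are invariant under isomorphism but not under equivalence, so Theorem~\ref{thm:recognition} cannot be read as saying that this class is literally the equivalence-closed essential image; the correct reading is that a \emph{concrete} double category lies in the essential image precisely when it satisfies (i) and (ii)---and indeed the proof of the theorem produces an isomorphism, not merely an equivalence, to an object of the strict image. Fortunately your argument never needs the closure property: for the corestriction $\Lax \to \M$ to be an equivalence one needs only that the image of $\DAlg{(\thg)}$ lands in $\M$ and that every object of $\M$ is equivalent (in fact isomorphic) in $\DBL^\atwo$ to an object of the image, and both facts are exactly what the theorem supplies.
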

We conclude this section by describing two variations on our main
result which will come in useful from time to time.
\subsection{Discrete pullback-fibrations}
\label{sec:discr-pullb-fibr}
Under mild conditions on the base category $\C$, we may rephrase
Proposition~\ref{prop:overcodomain} so as to obtain a more intuitive
characterisation of the concrete double categories over $\C$ in the
essential image of the semantics functor. Let us define a functor
$p \colon \A \to \C^\atwo$ to be a \emph{discrete pullback-fibration}
if, for every $\alg g \in \A$ over $g \in \C^\atwo$ and every pullback
square $(h, k) \colon f \to g$, there is a unique arrow
$\phi \colon \alg f \to \alg g$ in $\A$ over $(h,k)$, and this arrow
is cartesian:
\begin{equation}
\cd[@R-1.8em]{
& & & & A \ar[dd]_{f}
\ar[r]^h \pushoutcorner & C \ar[dd]^{g} \\
\exists! \alg f \ar@{.>}[r]^{\exists! \phi} & \alg g & {}\ar@{|->}[r] & {} \\ & & & & B \ar[r]^k &
D\rlap{ .}
}\label{eq:15}
\end{equation}

If $\C$ has all pullbacks, then the codomain functor $\C^\atwo \to \C$
is a fibration, with the pullback squares in $\C^\atwo$ as its
cartesian arrows; now the fact that the lifts~\eqref{eq:15} are
cartesian implies that the composite functor
$\mathrm{cod} \cdot p \colon \A \to \C^\atwo \to \C$ is also a
Grothendieck fibration, with cartesian liftings preserved and
reflected by $p$.

\begin{Prop}\label{prop:overcodomain2}
  If $\C$ has pullbacks, then a monad $\mathsf R$ on $\C^\atwo$ is
  isomorphic to one over the codomain functor if and only if
  $U^\mathsf R \colon \Alg{R} \to \C^\atwo$ is a discrete
  pullback-fibration.
\end{Prop}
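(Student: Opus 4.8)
The plan is to treat the two implications separately, using Proposition~\ref{prop:overcodomain} to reduce the ``if'' direction to the three conditions (a)--(c) recorded there.

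For the ``only if'' direction, since being a discrete pullback-fibration is invariant under isomorphism of monads, I may assume $\mathsf R$ is genuinely over the codomain functor, so that an $\mathsf R$-algebra on $g \colon C \to D$ is a single map $p \colon Eg \to C$ with $p \cdot \l g = 1$ and $g \cdot p = \r g$. Given such an $\alg g = (g,p)$ and a pullback square $(h,k) \colon f \to g$ with $f \colon A \to B$, I would define the required structure $q \colon Ef \to A$ on $f$ by its two composites into the pullback $A \cong B \times_D C$, namely $f \cdot q = \r f$ and $h \cdot q = p \cdot E(h,k)$; these are compatible since $k \cdot \r f = \r g \cdot E(h,k) = g \cdot p \cdot E(h,k)$, using functoriality of the factorisation and $g \cdot p = \r g$. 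The unit and multiplication axioms for $(f,q)$, the uniqueness of $q$, and the cartesianness of the induced map $\alg f \to \alg g$ all then reduce to equalities of maps into $A$, which one verifies by postcomposing with the two projections $f$ and $h$ and invoking the corresponding facts for $\alg g$ together with the naturality of $E$ and of $\m{}$. This is the routine but lengthy part of the argument.

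For the ``if'' direction I would verify (a)--(c). Condition (b) is the most transparent: the square $(f,f) \colon 1_A \to 1_B$ exhibits $1_A$ as the pullback of the identity $1_B$ along $f$, so once the identities carry algebra structures, discreteness forces $(f,f)$ to be a map of algebras. The substance lies in (a) and (c), which together amount to showing that each fibre of the induced Grothendieck fibration $\mathrm{cod} \cdot U^\mathsf R \colon \Alg{R} \to \C$ has a terminal object whose underlying map is an identity: such a terminal object is exactly an algebra structure $\alg 1_A$ (giving (a)), and terminality delivers the unique algebra map $(g,1) \colon \alg g \to \alg 1_D$ required by (c). To build $\alg 1_A$ I would begin with the free algebra on $1_A$, pull it back along the codomain-component of its unit to obtain an algebra $w \colon P \to A$ that lies over the codomain functor and is split by the unit, and then transport the structure along the isomorphism $1_A \cong w$ in $\C^\atwo$.

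The main obstacle is precisely this last step: showing that the comparison map $w$---equivalently, that the codomain-component of the unit at $1_A$---is invertible. Unlike for (b), the square in play here is not a pullback, so discreteness does not apply to it directly, and one is forced to combine the monad multiplication with the uniqueness and cartesianness of pullback-lifts in order to manufacture an inverse. I expect this invertibility argument---which is exactly what rigidifies a general monad into one over the codomain functor---to be the crux, with condition (b), the uniqueness of the identity structures, and hence (c) all following formally once it is established.
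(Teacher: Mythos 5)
Your ``only if'' direction is exactly the paper's argument---define $q \colon Ef \to A$ into the pullback by $fq = \r f$ and $hq = p \cdot E(h,k)$, then check the axioms, uniqueness and cartesianness componentwise---so there is no issue there. The ``if'' direction, however, rests on a step that is not merely deferred but false. You propose to obtain condition (a) of Proposition~\ref{prop:overcodomain} by pulling back the free algebra $\fr{1_A}$ along the codomain component $s$ of its unit, getting an algebra structure on the projection $w \colon P \to A$, and then transporting it along an isomorphism $1_A \cong w$ in $\C^\atwo$; equivalently, you need $w$ to be invertible. It is not, even when the hypothesis of the proposition holds for the best possible reason. Take $\mathsf R$ to be the split-epi monad on $\Set^\atwo$ of Section~\ref{sec:split-epimorphisms}: it is over the codomain functor, so $U^\mathsf R$ is a discrete pullback-fibration by the easy direction. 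Here $\r{1_A} = \spn{1,1} \colon A+A \to A$ and $s = 1_A$, so $P \cong A+A$ and $w$ is the codiagonal $\spn{1,1} \colon A+A \to A$, which is not invertible for $A \neq \emptyset$. Your claim that invertibility of $w$ is ``equivalent'' to invertibility of $s$ is also wrong, and pinpoints the confusion: when $s$ is invertible so is its pullback $u \colon P \to E1_A$, whence $(u,s)$ exhibits $w \cong \r{1_A}$ in $\C^\atwo$; thus $w$ is invertible iff $\r{1_A}$ is---which essentially never happens---whereas invertibility of $s$ is exactly what \emph{is} true under the hypotheses. The underlying slip is conflating ``split by the unit'' with ``isomorphic to $1_A$'': an algebra structure on $1_A$ is a \emph{retraction} $\r{1_A} \to 1_A$ in $\C^\atwo$ satisfying the monad axioms (in the example above it is precisely the non-invertible codiagonal), not an isomorphism.

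For comparison, the paper sidesteps (a)--(c) entirely: it observes that in the proof of Proposition~\ref{prop:overcodomain} those conditions are used only to show that each unit component $(r,s) \colon f \to \r f$ has $s$ invertible, and it proves this invertibility directly from the fibration hypothesis. One forms the pullback $g$ of $\r f$ along $s$, with second projection $u$ and induced map $t$ out of the domain of $f$; the discrete pullback-fibration property gives the unique algebra $\alg g$ making $(u,s) \colon \alg g \to \fr f$ an algebra map; freeness of $\fr f$ applied to the square $(t,1) \colon f \to g$ gives an algebra map $(v,w') \colon \fr f \to \alg g$ with $w's = 1$; and the composite algebra map $(uv, sw') \colon \fr f \to \fr f$ precomposes with the unit to give the unit, hence equals the identity by freeness, so $sw' = 1$ and $s$ is invertible. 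Note that what gets inverted is $s$, never the pullback projection. If you insist on routing through (a)--(c), you would first have to run precisely this argument and then transport structure along the isomorphisms $s$---at which point the conclusion of Proposition~\ref{prop:overcodomain} is already in hand, so the detour buys nothing. (A lesser point: your verification of (b) presupposes uniqueness of the algebra structures on identities, which itself needs an argument; but this is moot given the failure of your construction for (a).)
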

Under the hypotheses of this proposition, the composite
$\mathrm{cod} \cdot U^\mathsf R \colon \Alg{R} \to \C$ is a
Grothendieck fibration; which, loosely speaking, is the statement that
``$\mathsf R$-algebra structure is stable under pullback''.
\begin{proof}
  First let $\mathsf R$ be over the codomain functor. Given an
  $\mathsf R$-algebra $\alg g = (g,p) \colon C \to D$ and pullback
  square as in~\eqref{eq:15}, form the unique map $q \colon Ef \to A$
  with $fq = \r f$ and $hq = p \cdot E(h,k)$. This easily yields an
  $\mathsf R$-algebra $(f,q) \colon A \to B$ for which
  $(h,k) \colon (f,q) \to (g,p)$ is a cartesian algebra map; moreover,
  any $q'$ for which $(h,k)$ were a map $(f,q') \to (g,p)$ would have
  to satisfy the defining conditions of $q$; whence $q$ is unique as
  required. Conversely, suppose that $U^\mathsf R$ is a discrete
  pullback-fibration. Arguing as in
  Proposition~\ref{prop:overcodomain}, it suffices to show that each
  unit-component $(r,s) \colon f \to \r f$ has $s$ invertible. Form
  the pullback of $\r f$ along $s$ and induced map $t$ as on the left
  in:
  \begin{equation*}
    \cd{A \ar[d]_{f} \ar@/^1.5em/[rr]^r \ar@{.>}[r]^{t} & E \pushoutcorner \ar[d]^{g} \ar[r]^{u} & C
  \ar[d]^{\r f}\\
B \ar[r]^{1}  & B \ar[r]^{s} & D} \qquad \qquad \cd{
  A \ar[r]^t \ar[d]_f & E \ar[d]^g \\ B \ar[r]^1 & B} \ = \ 
\cd{A \ar[r]^r \ar[d]_f & C \ar[r]^v \ar[d]_{\r f} & E \ar[d]^g \\ B \ar[r]^s & D
  \ar[r]^w & B\rlap{ .}}
\end{equation*}
As $U^\mathsf R$ is a discrete pullback-fibration, there is a unique
$\mathsf R$-algebra structure $\alg g$ on $g$ making $(u,s) \colon
\alg g \to \fr f$ an algebra map; now applying freeness of $\fr f$ to
the central square above yields a unique algebra map $(v,w) \colon \fr
f \to \alg g$ making the right-hand diagram commute. In particular,
$ws = 1$; on the other hand, the algebra map $(uv,sw) \colon \fr f \to
\alg g \to \fr f$ precomposes with the unit $(r,s) \colon f \to \r f$
to yield $(r,s)$; whence $(uv,sw) = (1,1)$ and $s$
is invertible as required.
\end{proof}

The evident adaptation of the proof of Theorem~\ref{thm:recognition}
now yields:
\begin{Thm}\label{thm:recognition2}
  If $\C$ admits pullbacks, then a concrete double category $V \colon \mathbb A \to
  \Sq{\C}$ is in the essential image of $\DAlg{(\thg)} \colon
  \Lax  \to \DBL^\atwo$
  just when:
\begin{enumerate}[(i)]
\item $V_1 \colon \A_{1} \to \C^{\atwo}$ 
 is strictly monadic;
\item $V_1 \colon \A_{1} \to \C^{\atwo}$ is a discrete
  pullback-fibration.
\end{enumerate}
\end{Thm}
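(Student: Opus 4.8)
The plan is to rerun the proof of Theorem~\ref{thm:recognition} essentially verbatim, substituting Proposition~\ref{prop:overcodomain2} for Proposition~\ref{prop:overcodomain} at the single point where the characterisation of monads over the codomain functor is invoked. The hypothesis that $\C$ admits pullbacks is exactly what Proposition~\ref{prop:overcodomain2} requires, so the two characterisations are interchangeable and the remainder of the argument is insensitive to which is used.

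For the ``only if'' direction, I would begin with an \awfs $(\mathsf L, \mathsf R)$ and its image $U^\mathsf R \colon \DAlg{R} \to \Sq{\C}$ under $\DAlg{(\thg)}$. Condition (i) is immediate, since the vertical-arrow functor $V_1$ is the forgetful functor $\Alg{R} \to \C^\atwo$ out of an Eilenberg--Moore category, hence strictly monadic. For (ii), I would note that $\mathsf R$, arising from an \awfs, is a monad over the codomain functor, and then apply Proposition~\ref{prop:overcodomain2} to conclude that $U^\mathsf R = V_1$ is a discrete pullback-fibration.

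For the ``if'' direction, suppose the concrete $V \colon \mathbb A \to \Sq{\C}$ satisfies (i) and (ii). By strict monadicity, $\A_1$ is isomorphic over $\C^\atwo$ to $\Alg{R}$ for the monad $\mathsf R$ induced by $V_1$ and its left adjoint, and under this isomorphism $V_1$ corresponds to $U^\mathsf R$. Since being a discrete pullback-fibration is a property of a functor into $\C^\atwo$ and the comparison isomorphism lies over $\C^\atwo$, condition (ii) transfers to $U^\mathsf R$; Proposition~\ref{prop:overcodomain2} then supplies a monad $\mathsf R' \cong \mathsf R$ over the codomain functor. Transporting the double-category structure of $\mathbb A$ along the isomorphisms $\A_1 \cong \Alg{R} \cong \Alg{R'}$ over $\C^\atwo$ produces a concrete $V' \colon \mathbb A' \to \Sq{\C}$ extending the diagram~\eqref{eq:14} for $\mathsf R'$, whence Proposition~\ref{prop:reconstruct} places it in the image of $\DAlg{(\thg)}$.

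I expect no genuine obstacle, as all the hard work is already isolated in Propositions~\ref{prop:reconstruct} and~\ref{prop:overcodomain2}. The only point needing care is confirming that the discrete-pullback-fibration property, defined purely through the functor to $\C^\atwo$, is preserved under the equivalence with $\Alg{R}$; this is immediate because the monadic comparison and the transport isomorphisms all commute with the forgetful functors to $\C^\atwo$.
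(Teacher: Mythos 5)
Your proposal is correct and is essentially identical to the paper's own argument: the paper dispatches Theorem~\ref{thm:recognition2} by noting it follows from ``the evident adaptation of the proof of Theorem~\ref{thm:recognition}'', and the adaptation you spell out---substituting Proposition~\ref{prop:overcodomain2} for Proposition~\ref{prop:overcodomain}, with the discrete pullback-fibration property transferred along the monadic comparison isomorphism over $\C^\atwo$---is exactly that adaptation.
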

\begin{Rk}
  Note that for the ``if'' direction of this result, $\C$ does not
  need to have \emph{all} pullbacks; a closer examination of the proof
  of Proposition~\ref{prop:overcodomain2} shows that only pullbacks
  along (the underlying maps of) $\mathsf R$-algebras are needed. This
  is useful in practice; categories with pullbacks of this restricted
  kind arise in the study of stacks~\cite{Stanculescu2014Stacks}, or
  in the categorical foundations of Martin--L\"of type
  theory~\cite{Gambino2008The-identity}.
\end{Rk}

\subsection{Intrinsic concreteness}
\label{sec:intr-char}
Above, we have chosen to view the semantics $2$-functors~\eqref{eq:18}
as landing in the $2$-category $\DBL^\atwo$. We have done this so
as to stress the analogy with the situation for monads and comonads,
and also because this is the most practically useful form of our
results. Yet this presentation has some redundancy, as we now explain.

In the presence of the remaining hypotheses, condition (ii) of
Theorem~\ref{thm:recognition} is easily seen to be equivalent to the
requirement that the codomain functor $c \colon \A_1 \to \A_0$ of the
double category $\mathbb A$ be a left adjoint left inverse for the
identities functor $i \colon \A_0 \to \A_1$ (such \emph{lalis} will
appear again in Section~\ref{sec:lalis} below). This is a
property of $\mathbb A$, rather than extra structure; namely the
property that
\begin{equation}
  \text{each } c_{\alg f, iB} \colon \A_1(\alg f, iB) \to
  \A_0(c\alg f, B) \text{ is invertible.}\label{eq:24}
\end{equation}
But given only this, we may reconstruct the double functor $V \colon
\mathbb A \to \Sq{\A_0}$. First we reconstruct the
components~\eqref{eq:16} of the unit $\eta \colon 1 \Rightarrow ic$
using~\eqref{eq:24}; now we reconstruct $V_1 \colon \A_1 \to
\A_0^\atwo$ as the functor classifying the natural transformation
\begin{equation}
\cd{
\A_1 \ar@/^0.7em/[r]^{1} \ar@/_0.7em/[r]_{ic}
\dtwocell{r}{\eta} & \A_1 \ar[r]^d & \A_0\rlap{ ;}
}\label{eq:19}
\end{equation}
finally, by pasting together unit squares and using~\eqref{eq:24}, we
verify that $V = (\id,V_1)$ preserves vertical composition, and so is
a double functor. If $W \colon \mathbb B \to \Sq{\B_0}$ is another
concrete double category whose $W$ is determined in this manner, then
\emph{any} double functor $F \colon \mathbb A \to \mathbb B$ will
by~\eqref{eq:24} satisfy
$F_1 \cdot \eta_\mathbb A = \eta_\mathbb B \cdot F_1$, and so
$\Sq{F_0} \cdot V = W \cdot F$; that is, any double functor
$\mathbb A \to \mathbb B$ is concrete.

Let us, therefore, define a double category $\mathbb A$ to be
\emph{right-connected} if it satisfies~\eqref{eq:24}, and
\emph{monadic right-connected} if in addition the induced functor
$\A_1 \to (\A_0)^\atwo$ is strictly monadic. The preceding discussion
now shows that:
\begin{Prop}\label{prop:monadicrc}
  The assignation $(\C, \mathsf L, \mathsf R) \mapsto \DAlg{R}$ gives
  an equivalence of $2$-categories between $\Lax$ and the full
  sub-$2$-category of $\DBL$ on the monadic right-connected double
  categories.
\end{Prop}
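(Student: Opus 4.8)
The plan is to realise the assignation $(\C, \mathsf L, \mathsf R) \mapsto \DAlg{R}$ as the composite of the right semantics $2$-functor $\DAlg{(\thg)} \colon \Lax \to \DBL^\atwo$ with the domain $2$-functor $\DBL^\atwo \to \DBL$ that sends a concrete double category $V \colon \mathbb A \to \Sq{\A_0}$ to its domain $\mathbb A$. By Corollary~\ref{cor:characterise}, the first factor is a $2$-equivalence onto the full sub-$2$-category of $\DBL^\atwo$ on the monadic right-connected concrete double categories (in the sense of Theorem~\ref{thm:recognition}). It therefore suffices to show that the domain $2$-functor restricts to a $2$-equivalence from this sub-$2$-category onto the full sub-$2$-category of $\DBL$ on the monadic right-connected double categories, the point being precisely that the concreteness data $V$ is recoverable from, and transported by, the bare double-categorical structure.

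For surjectivity on objects I would take a monadic right-connected double category $\mathbb A$, so that~\eqref{eq:24} holds and the induced $V_1 \colon \A_1 \to (\A_0)^\atwo$ is strictly monadic, and run the reconstruction of the preceding discussion: using~\eqref{eq:24} I recover the unit squares~\eqref{eq:16}, I let $V_1$ be the functor classifying~\eqref{eq:19}, and I verify by pasting unit squares that $V = (\id, V_1)$ preserves vertical composition and so is a concrete double functor $\mathbb A \to \Sq{\A_0}$. Condition (ii) of Theorem~\ref{thm:recognition} for $V$ is just the reformulation of~\eqref{eq:24}, while (i) is the assumed monadicity; hence $(\mathbb A, V)$ lies in the relevant sub-$2$-category of $\DBL^\atwo$ and projects to $\mathbb A$.

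The heart of the matter is local full fidelity, and here the main obstacle is to show that every double functor automatically carries the reconstructed concrete structures. Fix monadic right-connected concrete double categories $(\mathbb A, V)$ and $(\mathbb B, W)$, whose $V$ and $W$ are necessarily the reconstructed ones; since each double functor $\Sq{\A_0} \to \Sq{\B_0}$ is of the form $\Sq{F_0}$, a $1$-cell between their images in $\DBL^\atwo$ is exactly a double functor $F \colon \mathbb A \to \mathbb B$ with $\Sq{F_0} \cdot V = W \cdot F$. As $V$ and $W$ are built from the lali units, this condition collapses to the single identity $F_1 \cdot \eta_{\mathbb A} = \eta_{\mathbb B} \cdot F_1$, which is where~\eqref{eq:24} does its work: both $F_1 \eta_{\mathbb A, \alg f}$ and $\eta_{\mathbb B, F_1 \alg f}$ are squares of $\mathbb B$ out of $F_1\alg f$ into a vertical identity with common codomain, and applying $c$ to each returns the identity map, using $c\eta = \id$ together with $c_{\mathbb B} F_1 = F_0 c_{\mathbb A}$; so by the invertibility~\eqref{eq:24} for $\mathbb B$ the two squares coincide. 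The uniqueness of the concrete lift then makes $F \mapsto (F, F_0)$ a bijection on $1$-cells, and the very same transport of units shows that every horizontal transformation between such double functors is automatically a $2$-cell of $\DBL^\atwo$; thus the domain $2$-functor is a local isomorphism of hom-categories. Combining surjectivity on objects with this local isomorphism, the restricted domain $2$-functor is a $2$-equivalence, and composing with Corollary~\ref{cor:characterise} yields the stated equivalence.
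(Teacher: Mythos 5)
Your proposal is correct and takes essentially the same route as the paper: the paper's own proof is precisely the discussion of Section~\ref{sec:intr-char}---recovering the unit squares from~\eqref{eq:24}, defining $V_1$ as the classifier of~\eqref{eq:19}, and observing that any double functor between such double categories automatically commutes with the reconstructed concrete structures---combined with Corollary~\ref{cor:characterise}. Your explicit factorisation through the domain $2$-functor $\DBL^\atwo \to \DBL$, and your spelled-out argument that $F_1\cdot\eta_{\mathbb A} = \eta_{\mathbb B}\cdot F_1$ via injectivity of~\eqref{eq:24}, merely formalise what the paper leaves as prose.
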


\section{Double categories at work}\label{sec:double-categories-at}
As a first illustration of the usefulness of the double categorical
approach and our Theorem~\ref{thm:recognition}, we use it to construct
a variety of algebraic weak factorisation systems, some well-known,
and some new.

\subsection{Split epimorphisms}
\label{sec:split-epimorphisms}
Given a category $\C$, we write $\cat{SplEpi}(\C)$ for the category of
split epimorphisms therein: objects are pairs of a map $g \colon A \to
B$ of $\C$ together with a section $p$ of $g$, while morphisms $(g,p)
\to (h,q)$ are serially commuting diagrams as on the left in:
\begin{equation}
\cd{A \ar@<-3pt>[d]_{g} \ar[r]^{u} & C \ar@<-3pt>[d]_{h} \\
B \ar@<-3pt>[u]_{p} \ar[r]_{s} & D \ar@<-3pt>[u]_{q}}
\quad\quad \quad \quad \quad
\cd{
1 \ar[r]^{m} \ar[dr]_{1} & 0 \ar[d]^{e} \ar[dr]^{me} \\
& 1 \ar[r]_{m} & 0\rlap{ .}
}\label{eq:22}
\end{equation}
Split epimorphisms compose---by composing the sections---so that we
have a double category $\dcat{SplEpi}(\C)$ which is concrete over $\C$
and easily seen to be right-connected. It will thus be the double
category of algebras of an \awfs on $\C$ whenever $U \colon
\cat{SplEpi}(\C) \to \C^\atwo$ is strictly monadic.

Now, we may identify $U$ with $\C^j \colon \C^\Ss \to \C^\atwo$, where
$\Ss$ is the \emph{free split epimorphism}, drawn above right, and $j
\colon \atwo \to \Ss$ is the evident inclusion. Thus $U$ strictly
creates colimits, and so will be strictly monadic whenever it has a
left adjoint: which will be so whenever $\C$ is cocomplete
enough to admit left Kan extensions along~$j$. Using the Kan
extension formula one finds that only binary coproducts are required;
the free split epi $\r f$ on $f \colon A \to B$ is $\spn{f,1} \colon
A+B \to B$ with section $\iota_B$, while the unit $f \to \r f$ is
given by:
\begin{equation}
  \cd{A \ar[d]_{f} \ar[r]^-{\iota_{A}} & A+B \ar@<-3pt>[d]_{\spn{f,1}} \\
B \ar[r]_-{1} & B\rlap{ .} \ar@<-3pt>[u]_{\iota_B}}\label{eq:11}
\end{equation}

The remaining structure of the \awfs for split epis can  be
calculated using Proposition~\ref{prop:reconstruct}. We find that each
coproduct inclusion admits a $\mathsf L$-coalgebra structure; while if
$\C$ is lextensive, then this coalgebra structure is \emph{unique},
and the category of $\mathsf L$-coalgebras is precisely the category
of coproduct injections and pullback squares. This was proved in
\cite[Proposition~4.2]{Grandis2006Natural}, but see also
Example~\ref{ex:8}(\ref{item:coprod}) below.

Finally, let us observe an important property of the split epi \awfs.
Although $\mathsf R = (R,\eta,\mu)$ is a monad, its algebras---the
split epis---are simultaneously the algebras for its underlying
pointed endofunctor $(R,\eta)$; which is to say that the monad
$\mathsf R$ is \emph{algebraically-free}~\cite[\S
22]{Kelly1980A-unified} on its underlying pointed endofunctor. We will
make use of this property in Proposition~\ref{prop:6} below.

\subsection{Lalis}
\label{sec:lalis}
In the terminology of Gray \cite{Gray1966Fibred}, a \emph{lali} 
(left adjoint left inverse) in a $2$-category $\C$ is a
split epi $(g,p) \colon A \to B$ with the extra property that $g
\dashv p$ with identity counit. This property may be expressed either
by requiring that for each $x \colon A \to X$ and $y \colon B \to X$
in $\C$, the function
\begin{equation}
(\thg) \cdot p \colon \C(A,X)(x,yg) \to \C(B,X)(xp,y)\label{eq:25}
\end{equation}
be invertible; or by requiring the provision of a---necessarily
unique---unit $2$-cell $\eta \colon 1 \Rightarrow pg$ satisfying
$g\eta = 1$ and $\eta p = 1$. Lalis in $\C$ form a category
$\cat{Lali}(\C)$, wherein a morphism is a commuting diagram as on the
left of~\eqref{eq:22}; it is automatic by invertibility
of~\eqref{eq:25} that such a morphism also commutes with the unit
$2$-cells.

Since split epis and adjoints compose, so too do lalis; thus---writing
$\C_0$ for the underlying category of $\C$---lalis in $\C$ form a
concrete sub-double category $\dcat{Lali}(\C)$ of
$\dcat{SplEpi}(\C_0)$ which, since it is full on cells, inherits
right-connectedness. So $\dcat{Lali}(\C)$ will be the double category
of algebras for an \awfs on $\C_0$ whenever $U \colon \cat{Lali}(\C)
\to (\C_0)^{\atwo}$ is strictly monadic. Now, we may identify $U$ with
restriction
\[
\cat{2}\text-\CAT(j, \C) \colon \cat{2}\text-\CAT(\L, \C) \to \cat{2}\text-\CAT(\atwo, \C)
\]
along the inclusion $j \colon \atwo \to \L$ of $\atwo$ into the
\emph{free lali} $\L$---which has the same underlying category as the
free split epimorphism $\Ss$ and a single non-trivial 2-cell $1
\Rightarrow me$; so as before, monadicity obtains whenever $\C$ is
cocomplete enough (as a $2$-category) to admit left Kan extensions
along $j$. In this case, the colimits needed are \emph{oplax colimits
  of arrows}: given $f \colon C \to B$, its oplax colimit is the
universal diagram as on the left of:
\begin{equation*}
  \cd[@C-0.2em]{
    C \ar[rr]^f \ar[dr]_\ell & \rtwocell{d}{\alpha} & B \ar[dl]^p & & &
    C \ar[rr]^f \ar[dr]_f & \rtwocell{d}{1} & B \ar[dl]^1 \\
        & A & & & & & B\rlap{ .}
  }
\end{equation*}
Applying the one-dimensional aspect of this universality to the cocone
on the right yields a retraction $g \colon A \to B$ for $p$; now
applying the two-dimensional aspect shows that the pair $(g,p)$
verifies invertibility in~\eqref{eq:25}, and so is a lali, the free
lali on $f$, with as unit $f \to Rf$ the map $(\ell, 1)$.

When $\C = \Cat$, we may characterise the corresponding category of
$\mathsf L$-coalgebras as the subcategory of $\Cat^\atwo$ whose
objects are the \emph{categorical cofibrations}---those functors
arising as pullbacks of the domain inclusion $\cat{1} \to \atwo$---and
whose morphisms are the pullback squares; see
example~(\ref{item:catcof}) in Section~\ref{sec:stable-class-monom}
below.\footnote{This characterisation would extend to any suitably
  exact $2$-category, except that there is no account of the
  appropriate two-dimensional exactness notion; it appears to be
  closely related to the one mentioned in the introduction
  to~\cite{Street1980Fibrations}.}

On reversing or inverting the non-trivial $2$-cell of $\L$, it becomes
the free \emph{rali} (right adjoint left inverse) $\L^\co$ or the free
\emph{retract equivalence} $\L^g$; from which we obtain \awfs whose
algebras are ralis or retract equivalences on any $2$-category that
admits lax colimits or pseudocolimits of arrows. When $\C = \Cat$, the
respective $\mathsf L$-coalgebra structures on a morphism $f$ are
unique, and exist just when $f$ is a pullback of the codomain
inclusion $\aone \to \atwo$, respectively, when $f$ is injective on
objects.

\subsection{Via cocategories}\label{sec:via-cocategories}
The preceding examples fit into a common framework. Let $\V$ be a
suitable base for enriched category theory, and suppose that we are
given a cocategory object $\dcat A$ in $\VCat$:
\begin{equation*}
\cd{
\mathbb A_0 = \cat I \ar@<5pt>[rr]^{d} \ar@<-5pt>[rr]_{c} & &
\mathbb A_1 \ar[ll]|{i} \ar@<5pt>[rr]^{p} \ar@<-5pt>[rr]_{q}
\ar[rr]|{m}& & \mathbb A_{2} }
\end{equation*}
with object of co-objects the unit $\V$-category, with $d$ and $c$
jointly bijective on objects, and with $i \dashv c$ with identity
unit. For any $\V$-category $\C$, the image of $\mathbb A$ under the
limit-preserving $\VCAT(\thg, \C) \colon \VCat^\op \to \CAT$ is an
internal category $\mathbb A(\C)$ in $\CAT$---a double
category---which is right-connected and concrete over $\C_0$, the
underlying ordinary category of $\C$. The induced comparison functor
$\mathbb A(\C)_1 \to (\C_0)^\atwo$ is the functor
\begin{equation}
\VCAT(j, \C) \colon \VCAT(\mathbb A_1, \C) \to \VCAT(\atwo, \C)\label{eq:26}
\end{equation}
where $j \colon \atwo \to \mathbb A_1$ is the functor from the free
$\V$-category on an arrow which classifies the $d$-component of the
unit
$\eta \colon 1 \Rightarrow ci \colon \mathbb A_1 \to \mathbb A_1$.
This $j$ is bijective on objects, and so~\eqref{eq:26} will be monadic
whenever $\C$ is sufficiently cocomplete that each $\V$-functor
$\atwo \to \C$ admits a left Kan extension along $j$; whereupon
Theorem~\ref{thm:recognition} induces an \awfs on $\C$ with
$\mathbb A(\C)$ as its double category of algebras. Note that the
assignation $\C \mapsto \mathbb A(\C)$ is clearly $2$-functorial in
$\C$, so that by Proposition~\ref{prop:monadicrc}, it underlies a
$2$-functor $\VCAT' \to \Lax$ defined on all $\V$-categories with
sufficient colimits and all $\V$-functors (not necessarily
cocontinuous) between them.

Of course, the preceding examples are instances of this framework on
taking $\V = \Set$ with $\mathbb A_{1}= \Ss$ (for split epis); or
taking $\V = \cat{Cat}$ with $\mathbb A_1 = \L$, $\L^\co$ or $\L^g$
(for lalis, ralis or retract equivalences).

\subsection{Stable classes of monics}
\label{sec:stable-class-monom}

Let $\C$ be a category with pullbacks, and consider a class $\M$ of
monics which contains the isomorphisms, is closed under composition,
and is stable under pullback along maps of $\C$; we call this a
\emph{stable class of monics}. The class $\M$ is said to be
\emph{classified} if the category $\M_{\mathrm{pb}}$ of pullback
squares between $\M$-maps has a terminal object, which we call a
\emph{generic $\M$-map}. A standard
argument~\cite[p.~24]{Johnstone1977Topos} shows that the domain of a
generic $\M$-map must be terminal in $\C$.

Since $\M$-maps and pullback squares compose, the category $\M_{\mathrm{pb}}$ underlies a double category $\dcat M_{\mathrm{pb}}$
concrete over $\C$. The monicity of the $\M$'s ensures that for each
$m \in \M$, the square on the left below is a pullback and so in $\M_\mathrm{pb}$. Thus $\dcat M_{\mathrm{pb}}$ is left-connected, and so
will comprise the $\mathsf L$-coalgebras of an \awfs on $\C$ as soon
as the forgetful $U \colon \M_{\mathrm{pb}} \to \C^\atwo$ is
comonadic.
\[
\cd[@-0.5em]{
A \ar[d]_1 \ar[r]^1 & A \ar[d]^m \\ A \ar[r]_m & B}
\qquad \quad
\cd[@-0.5em]{
A_1 \ar[r]^{f_1} \ar[d]_{a} & B_1 \ar@<2pt>[r]^{g_1}
\ar@<-2pt>[r]_{h_1} \ar[d]_{b} & C_1 \ar[d]_c\\
A_2 \ar[r]_{f_2} & B_2 \ar@<2pt>[r]^{g_2}
\ar@<-2pt>[r]_{h_2} & C_2 
}
\]

First we note that $U$ always strictly creates equalisers. Indeed, if
$b \rightrightarrows c$ is a parallel pair in $\M_{\mathrm{pb}}$
as on the far right above, then on forming the equaliser $a \to b$ in
$\C^\mathbf 2$, as to the left, the equalising square will be a
pullback---since $c$ is monic and both rows are equalisers---and so
will lift uniquely to $\M_\mathrm{pb}$. Thus $U$ will be strictly
comonadic as soon as it has a right adjoint.

For this, we must assume that the generic $\M$-map $p \colon 1 \to
\Sigma$ is exponentiable; it follows that every $\M$-map is
exponentiable, since exponentiable maps are pullback-stable in any
category with pullbacks
(see~\cite[Corollary~2.6]{Street2010The-comprehensive}, for example).
In particular, each map $B \times p \colon B \to B \times \Sigma $ is
exponentiable, and so for any $f \colon A \to B$ of $\C$, we may form
the object $\Pi_{B \times p}(f)$ of $\C / (B \times \Sigma)$; which
has the universal property that, in the category $\P_f$ whose objects
are pullback diagrams as on the left in
\[
\cd[@C-0.5em]{ X \ar[d]_\ell \ar[r]^-h & A \ar[r]^{f} & B
  \ar[d]^{B \times p}\\
  Y \ar[rr]_-{k} & & B \times \Sigma } \qquad \qquad \quad
\cd[@C-0.5em]{ C \ar[d]_\eta \ar[r]^-\varepsilon & A \ar[r]^{f} & B
  \ar[d]^{B \times p}\\
  D \ar[rr]_-{\Pi_{B \times p}(f)} & & B \times \Sigma }
\]
and whose maps $(X,Y,h,k,\ell) \to (X',Y',h', k', \ell')$ are maps $X
\to X'$ and $Y \to Y'$ satisfying the obvious equalities, there is a
terminal object as on the right. Now, an object of $\P_f$ determines
and is determined by a pair of squares
\[
\cd[@-0.5em]{ X \ar[d]_\ell \ar[r]^-{h_1} & A \ar[r]^{f} & B
  \ar[d]^{1}\\
  Y \ar[rr]_{k_1} & & B} \qquad \qquad 
\cd[@-0.5em]{ X \ar[d]_\ell \ar[rr]^-{!} & & 1
  \ar[d]^{p}\\
  Y \ar[rr]_{k_2} & & \Sigma }
\]
with the right-hand one a pullback. This ensures that $\ell$---as a
pullback of $p$---is an $\M$-map, whereupon by genericity of $p$, the
map $k_2$ is uniquely determined. Thus we may identify $\P_f$ with the
comma category $U \downarrow f$, and the terminal object therein
provides the value $\eta \colon C \to D$ of the desired right adjoint
at $f$.

For the $(\mathsf L, \mathsf R)$ induced in this way, the restriction
of the monad $\mathsf R$ to the slice over $B$ is the \emph{partial
  $\M$-map classifier monad} on $\C / \B$;
see~\cite{Rosolini1986Continuity} and the references therein. Note
also that, in the terminology of~\cite{Dyckhoff1987Exponentiable}, the
$\Pi_{B \times p}(f)$ constructed above is a \emph{pullback
  complement} of $f$ and $B \times p$.

\begin{Exs}
  \begin{enumerate}[(i)]
  \item Let $\E$ be an elementary topos, and $\M$ the stable class of
    all monics. $\M$~is classified by $\top \colon 1 \to \Omega$,
    which---like any map in a topos---is exponentiable; and so we have
    an \awfs on $\E$ for which $\Coalg{L}$ is the category of
    monomorphisms and pullback squares. The corresponding $\mathsf
    R$-algebras are discussed in~\cite{Kock1991Algebras}. More
    generally, we can let $\E$ be any quasitopos, and $\M$ the class
    of strong monomorphisms.\vskip0.5\baselineskip
  \item Let $\E$ be an elementary topos, $j$ a Lawvere-Tierney
    topology on $\E$, and $\M$ the class of $j$-dense monomorphisms.
    This is a stable class of monics classified by $\top \colon 1 \to
    J$, where $J$ is the equaliser of $j, \top \colon \Omega \to
    \Omega$. So $j$-dense monomorphisms and their pullbacks form the
    coalgebras of an \awfs on $\E$. An algebraically fibrant object is
    a ``weak sheaf'': an object equipped with coherent, but not
    unique, choices of patchings for $j$-covers. An object $X$ is a
    sheaf if and only if both $X \to 1$ and $X \to X \times X$ admit
    $\mathsf R$-algebra structure. \vskip0.5\baselineskip
  \item\label{item:coprod} Let $\C$ be a lextensive category and $\M$
    the stable class of coproduct injections. This is classified by
    $\iota_1 \colon 1 \to 1 + 1$; which by extensivity is always
    exponentiable, with $\Pi_{\iota_1} = (\thg) + 1 \colon \C \to \C /
    (1+1)$. More generally, the right adjoint to pullback along $B
    \times \iota_1$ is $(\thg) + B \colon \C / B \to \C / (B+B)$;
    whence the \awfs generated on $\C$ is that for split epis. This
    re-proves the fact that, in any lextensive category, the
    coalgebras of the \awfs for split epis are the coproduct
    injections and pullback squares.

    \vskip0.5\baselineskip
  \item\label{item:catcof} Consider, as in Section~\ref{sec:lalis},
    the stable class of categorical cofibrations in $\cat{Cat}$. This
    is classified by the domain functor $d \colon \aone \to \atwo$,
    which is exponentiable in $\cat{Cat}$, so that we have an \awfs
    wherein $\mathsf L$-coalgebras are the categorical cofibrations
    and pullback squares. For any category $B$, the functor $\Pi_{B
      \times d} \colon \cat{Cat} / B \to \cat{Cat} / (B \times \atwo)$
    sends $f \in \cat{Cat} / B$ to the induced map between the oplax
    colimits of $f$ and of $1_B$---the latter being simply $B \times
    \atwo$---and it follows that this \awfs is the one for lalis. This
    proves the claim about its coalgebras made in
    Section~\ref{sec:lalis} above.\vskip0.5\baselineskip

  \item\label{item:1} By a \emph{left cofibration} of simplicial sets,
    we mean a pullback of the face inclusion $\delta_1 \colon \Delta[0] \to
    \Delta[1]$. The left cofibrations constitute a stable class of
    monics, classified by $\delta_1$; and since $\cat{SSet}$ is a topos,
    this $\delta_1$ is exponentiable. We thus have an \awfs on simplicial
    sets whose $\mathsf L$-coalgebras are left cofibrations and
    pullbacks. The corresponding $\mathsf R$-coalgebras we call the
    \emph{simplicial lalis}; we will have more to say about them in
    Section~\ref{sec:enrich-cofibr-gener} below.
  \end{enumerate}\label{ex:8}
\end{Exs}

\subsection{Projective and injective liftings}
\label{sec:liftings-awfs}
Our next example is a construction which, under suitable
circumstances, allows us to lift an \awfs $(\D, \mathsf L, \mathsf R)$
along a functor $F \colon \C \to \D$ to yield an \awfs on $\C$. There
are two ways of doing this, well known from the model category
literature: we either lift the algebras (as
in~\cite{Crans1995Quillen}) or the coalgebras (as
in~\cite{Bayeh2014Left-induced}). In the former case, we form the
pullback on the left in:
  \begin{equation}
    \cd[@C+0.5em]{
      \dcat A \pushoutcorner \ar[r] \ar[d]_{V} & \DAlg{R} \ar[d]^{U^{\mathsf
          R}}\\
      \Sq{\C} \ar[r]^{\Sq{F}} & \Sq{\D}} \qquad \qquad \qquad
    \cd{
      \A_1 \pushoutcorner\ar[r] \ar[d]_{V_1} & \Alg{R} \ar[d]^{U^{\mathsf
          R}}\\
      \C^\atwo \ar[r]^{F^\atwo} & \D^\atwo\rlap{ .}
    }\label{eq:39}
\end{equation}
The double category $\mathbb A$ therein, as a pullback of
right-connected double categories, is itself right-connected; while
the component functor $V_1$ of $V$ as on the right above is a pullback
of the strictly monadic $U^{\mathsf R}$, whence easily seen to
strictly create coequalisers for $V_1$-absolute coequaliser pairs. So
whenever $V_1$ admits a left adjoint, $\mathbb A$ will by
Theorem~\ref{thm:recognition} constitute the algebra double category
of an \awfs $(\mathsf L', \mathsf R')$ on $\C$, the \emph{projective
  lifting} of $(\mathsf L, \mathsf R)$ along $F$. The left square
of~\eqref{eq:39} then amounts to a lax morphism of \awfs $(\C, \mathsf
L', \mathsf R') \to (\D, \mathsf L, \mathsf R)$, which is easily seen
to be a cartesian lifting of $F$ along the forgetful functor $\Lax \to
\CAT$.

Dually, we may form the pullback of double categories on the left in
\begin{equation}
  \cd{
    \dcat B \pushoutcorner \ar[r] \ar[d]_{W} & \DCoalg{L} \ar[d]^{U^{\mathsf
        L}}\\
    \Sq{\C} \ar[r]^{\Sq{F}} & \Sq{\D}
  } \qquad \qquad \qquad
  \cd{
    \B_1 \pushoutcorner \ar[r] \ar[d]_{W_1} & \Coalg{L} \ar[d]^{U^{\mathsf
        L}}\\
    \C^\atwo \ar[r]^{F^\atwo} & \D^\atwo\rlap{ ;}   
  }\label{eq:40}
\end{equation}
if on doing so, the component $W_1$ of $W$ as to the right has a right
adjoint, then $\dcat B$ will be the coalgebra double category of an
\awfs on $\C$, the \emph{injective lifting} of $(\mathsf L, \mathsf
R)$ along $F$. The left square of~\eqref{eq:40} now corresponds to an
oplax map of \awfs providing a cartesian lifting of $F$ with respect
to the forgetful $\Oplax \to \CAT$.

Sometimes, it may be that the required adjoints to $V_1$ or $W_1$
simply exist. For example, the projective and injective liftings of an
\awfs $(\C, \mathsf L, \mathsf R)$ along a forgetful functor $\C / X \to
\C$ always exist, and coincide; the factorisations of the resultant
\emph{slice \awfs} are given by
\begin{equation*}
  \cd[@-0.2em]{A \ar[rr]^f \ar[dr]_a & & B \ar[dl]^b \\
    & X  } 
  \qquad \mapsto \qquad
  \cd[@-0.2em]{ A \ar[r]^{\l f} \ar[dr]_a & 
    Ef \ar[r]^{\r f} \ar[d]|{b \cdot \r f} & B \ar[dl]^b \rlap{ .} \\
    & X
  }
\end{equation*}

More typically, we will appeal to a general result like the following,
which ensures that the desired adjoint exists without necessarily
giving a closed formula for it. As in the introduction, we call an
\awfs on a locally presentable category \emph{accessible} if its
comonad $\mathsf L$ and monad $\mathsf R$ are so, in the sense of
preserving $\kappa$-filtered colimits for some regular cardinal
$\kappa$; in fact, it is easy to see that accessibility of the comonad
implies that of the monad, and vice versa.

\begin{Prop}
\label{prop:2}  Let $\C$ and $\D$ be locally presentable categories, and
  $(\mathsf L, \mathsf R)$ an accessible \awfs on $\D$.
  \begin{enumerate}[(a)]
  \item The projective lifting of $(\mathsf L, \mathsf R)$ along any
    right adjoint $F \colon \C \to \D$ exists and is
    accessible;\vskip0.2\baselineskip
  \item The injective lifting of $(\mathsf L, \mathsf R)$ along any
    left adjoint $F \colon \C \to \D$ exists and is accessible.
  \end{enumerate}
\end{Prop}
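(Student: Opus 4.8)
The plan is to prove part~(a) in detail and obtain part~(b) by dualisation. For part~(a), the discussion preceding the statement reduces everything to producing a left adjoint to $V_1 \colon \A_1 \to \C^\atwo$ from~\eqref{eq:39}: once such an adjoint exists, $V_1$ is strictly monadic by Beck's theorem, and Theorem~\ref{thm:recognition} then exhibits $\mathbb A$ as the algebra double category of the projective lifting $(\mathsf L', \mathsf R')$. I would produce this adjoint by the adjoint functor theorem for locally presentable categories, so the task is to verify that $\A_1$ is locally presentable and that $V_1$ is accessible and continuous.

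First I would identify $\A_1$ as locally presentable. Since $\mathsf R$ is accessible, $\Alg{R}$ is locally presentable and $U^\mathsf R \colon \Alg{R} \to \D^\atwo$ is an accessible isofibration that creates limits; and since $F$ is a right adjoint between locally presentable categories it is accessible and preserves all limits, hence so does $F^\atwo$. As one leg of~\eqref{eq:39} is an isofibration, the strict pullback agrees with the pseudo-pullback, so $\A_1$ is a pseudo-pullback of accessible categories along accessible functors and is therefore accessible. Moreover $\A_1$ is complete, with limits computed componentwise: this uses precisely that $F^\atwo$ and $U^\mathsf R$ both preserve limits, so that the componentwise limit lands in the strict pullback. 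An accessible complete category is locally presentable, so $\A_1$ is locally presentable.

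Next I would check the hypotheses of the adjoint functor theorem for $V_1$. Continuity is immediate from the componentwise description of limits just used. For accessibility, choose $\kappa$ so large that both $U^\mathsf R$ and $F^\atwo$ preserve $\kappa$-filtered colimits; these are then computed componentwise in $\A_1$ and preserved by $V_1$. The adjoint functor theorem supplies a left adjoint $G \dashv V_1$, whence the \awfs $(\mathsf L', \mathsf R')$ exists. To see it is accessible, note that under $\A_1 \cong \Alg{R'}$ we have $V_1 = U^{\mathsf R'}$ and $R' = V_1 G$; as $G$ preserves all colimits and $V_1$ preserves $\kappa$-filtered ones, $R'$ preserves $\kappa$-filtered colimits, so $\mathsf R'$ is accessible, and hence so is $\mathsf L'$ by the remark preceding the statement.

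Part~(b) is formally dual. With $F$ now a left adjoint, $F^\atwo$ preserves all colimits; the pullback $\B_1$ of~\eqref{eq:40} is accessible and cocomplete---colimits being computed componentwise using that $F^\atwo$ and the comonadic $U^\mathsf L$ preserve them---hence locally presentable, and $W_1 \colon \B_1 \to \C^\atwo$ is accessible and cocontinuous. The adjoint functor theorem then gives a right adjoint $H$ to $W_1$, the dual of Theorem~\ref{thm:recognition} yields the injective lifting, and $\mathsf L' = W_1 H$ is accessible because $W_1$ preserves all colimits while $H$, as a right adjoint between locally presentable categories, is accessible; accessibility of $\mathsf R'$ again follows from the remark. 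The one point to watch throughout is the bookkeeping---that right-adjointness of $F$ feeds continuity of $V_1$ and hence the left-adjoint form of the adjoint functor theorem, while left-adjointness of $F$ feeds cocontinuity of $W_1$ and the right-adjoint form---but this presents no real difficulty once the two cases are set out in parallel.
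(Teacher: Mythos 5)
Your proposal is correct in outline and its skeleton matches the paper's: both reduce the problem to producing an adjoint to $V_1$ (resp.\ $W_1$), both use that the strict pullback agrees with the bipullback because the monadic/comonadic leg is an isofibration \cite{Joyal1993Pullbacks}, and both extract accessibility of the lifted \awfs from the resulting adjunction exactly as you do. The difference is in packaging: where you prove by hand that the pullback category is locally presentable (accessible, by closure of accessible categories under pseudo-pullbacks, plus complete resp.\ cocomplete via componentwise (co)limits) and then invoke the adjoint functor theorem for locally presentable categories, the paper instead cites Bird's theorems \cite[Theorems~2.18 and~3.15]{Bird1984Limits} that the $2$-categories of locally presentable categories with right (resp.\ left) adjoint functors are closed under bilimits in $\CAT$, which yields the adjoint to $V_1$ (resp.\ $W_1$) in a single step. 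Your route essentially inlines the proof of Bird's theorem; it buys self-containedness at the cost of the componentwise-(co)limit computations, where you should be slightly more careful: preservation of limits by $F^\atwo$ and $U^{\mathsf R}$ only produces a limit in the pullback up to isomorphism, and what makes the \emph{strict} pullback complete is that $U^{\mathsf R}$, being strictly monadic, strictly \emph{creates} limits (dually, $U^{\mathsf L}$ strictly creates colimits); you mention creation earlier, so this is a matter of phrasing rather than substance.

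The one substantive point you elide is in part~(b): your assertion that $\B_1$ is accessible requires $\Coalg{L}$ to be accessible, and this is \emph{not} formally dual to the statement that $\Alg{R}$ is locally presentable for an accessible monad---local presentability is not a self-dual notion, and Eilenberg--Moore categories of accessible \emph{comonads} are precisely where the paper has to do its work: it invokes \cite[Theorem~5.1.6]{Makkai1989Accessible}, closure of accessible categories and accessible functors under all bilimits (in particular under Eilenberg--Moore objects of comonads), to conclude that $\Coalg{L}$ is accessible, and then cocompleteness (since $U^{\mathsf L}$ creates colimits) to conclude it is locally presentable. Since you already appeal to the same Makkai--Par\'e closure theorem for pseudo-pullbacks in part~(a), the repair costs you no new tools; but as written, ``part~(b) is formally dual'' passes over the only step of the proposition where something genuinely non-dual must be proved.
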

\begin{proof}
  For (a), let $F \colon \C \to \D$ be a right adjoint. To show the
  projective lifting exists, we must prove that the functor $V_1$ to
  the right of~\eqref{eq:39} has a left adjoint. Now, $\C^\atwo$ and
  $\D^\atwo$ are locally presentable since $\C$ and $\D$ are,
  by~\cite[\S7.2(h)]{Gabriel1971Lokal}, while $\Alg{R}$ is locally
  presentable since $\mathsf R$ is accessible,
  by~\cite[Satz~10.3]{Gabriel1971Lokal}; moreover, both $F^\atwo$ and
  $U^\mathsf R$ are right adjoints. Since $U^\mathsf R$ has the
  isomorphism-lifting property, its pullback against $F^\atwo$ is also
  a bipullback~\cite{Joyal1993Pullbacks}; but
  by~\cite[Theorem~2.18]{Bird1984Limits}, the $2$-category of locally
  presentable categories and right adjoint functors is closed under
  bilimits in $\CAT$, whence $V_1$, like $F^\atwo$ and $U^\mathsf R$,
  lies in this $2$-category; thus it has a left adjoint $K_1$ and is
  accessible by~\cite[Satz~14.6]{Gabriel1971Lokal}. So the projective
  lifting of $(\mathsf L, \mathsf R)$ along $F$ exists, and is
  accessible as its monad $V_1 K_1$ is so.

  For (b), let $F \colon \C \to \D$ be a left adjoint; we must show
  that $W_1$ to the right of~\eqref{eq:40} has a right adjoint. We now
  argue using~\cite[Theorem~3.15]{Bird1984Limits}, which states that the
  $2$-category of locally presentable categories and left adjoint
  functors is closed under bilimits in $\CAT$. As before, $F^\atwo
  \colon \C^\atwo \to \D^\atwo$ lies in this $2$-category, and
  $U^\mathsf L \colon \Coalg{L} \to \D^\atwo$ will so long as
  $\Coalg{L}$ is in fact locally presentable. But $\mathsf L$ is an
  accessible comonad on the accessible category $\D^\atwo$;
  by~\cite[Theorem~5.1.6]{Makkai1989Accessible}, the $2$-category of
  accessible categories and accessible functors is closed in $\CAT$
  under all bilimits, in particular under Eilenberg-Moore objects of
  comonads, and so $\Coalg{L}$ is accessible; it is also cocomplete
  (since $U^\mathsf L$ creates colimits) and so is locally
  presentable, as required. We thus conclude that the functor $W_1$,
  like $U^\mathsf L$ and $F^\atwo$, is a left adjoint between locally
  presentable categories; while by~\cite[Satz~14.6]{Gabriel1971Lokal},
  its right adjoint $G_1$ is accessible. So the projective lifting of
  $(\mathsf L, \mathsf R)$ along $F$ exists, and is accessible as its
  comonad $W_1 G_1$ is so.
\end{proof}

In fact, we may drop the requirement of local presentability from the
first part of the preceding proposition if we strengthen the
hypotheses on $U$.
\begin{Prop}
  \label{prop:7}
  Let $\D$ be a cocomplete category, let $\mathsf T$ be an accessible
  monad on $\D$ and let $(\mathsf L, \mathsf R)$ be an accessible
  \awfs on $\D$. Then the projective lifting of $(\mathsf L, \mathsf
  R)$ along the forgetful functor $U^\mathsf T \colon \Alg{T}
  \to \D$ exists and is accessible.
\end{Prop}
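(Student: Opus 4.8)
The plan is to reduce the statement to the existence of a left adjoint for the functor $V_1$ appearing in the pullback~\eqref{eq:39} that defines the projective lifting along $F = U^{\mathsf T}$, and then to produce that adjoint---with the required colimit-preservation---by recognising $\A_1$ as the category of algebras for a coproduct of two accessible monads on $\D^\atwo$. As was observed when the projective lifting was constructed, $V_1 \colon \A_1 \to (\Alg T)^\atwo$ is a pullback of the strictly monadic $U^{\mathsf R}$, and so strictly creates $V_1$-absolute coequalisers; by Beck's theorem it will therefore be strictly monadic, and the lifted \awfs $(\mathsf L', \mathsf R')$ will exist via Theorem~\ref{thm:recognition}, as soon as $V_1$ admits a left adjoint $K_1$. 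The lifted monad $\mathsf R'$ is then the monad $V_1 K_1$ of this adjunction, so for accessibility it will suffice to check that $V_1 K_1$ preserves $\kappa$-filtered colimits for some $\kappa$; the comonad $\mathsf L'$ is then accessible as well, by the remark preceding Proposition~\ref{prop:2}. Everything thus reduces to constructing $K_1$ and bounding it.

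Since $\D$ is cocomplete, so is $\D^\atwo$, colimits there being computed pointwise. On $\D^\atwo$ we have two monads: the monad $\mathsf R$ itself, and the monad $\mathsf T^\atwo$ obtained by applying $\mathsf T$ to domains and codomains, whose algebras are precisely the morphisms of $\mathsf T$-algebras, so that $\Alg{T^\atwo} = (\Alg T)^\atwo$ over $\D^\atwo$. As $\mathsf T$ preserves $\kappa$-filtered colimits and these are computed pointwise in $\D^\atwo$, the monad $\mathsf T^\atwo$ is accessible; and $\mathsf R$ is accessible by hypothesis. Enlarging $\kappa$ if necessary, we may assume $\mathsf R$, $\mathsf T^\atwo$ and their coproduct are all $\kappa$-accessible. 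Because $\D^\atwo$ is cocomplete and both monads are accessible, their coproduct $\mathsf M \defeq \mathsf R \ast \mathsf T^\atwo$ in the category of monads on $\D^\atwo$ exists and is again accessible, by Kelly's transfinite construction of free monads and coproducts thereof~\cite[\S\S 22--27]{Kelly1980A-unified}.

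Now I would invoke the fact that the Eilenberg--Moore construction carries coproducts of monads to pullbacks of their algebra categories over the base; this exhibits $\Alg M$ as isomorphic over $\D^\atwo$ to $\Alg R \times_{\D^\atwo} (\Alg T)^\atwo = \A_1$. Under this identification $V_1$ becomes the functor $\phi^\ast$ restricting along the coprojection $\phi \colon \mathsf T^\atwo \to \mathsf M$. Since $\mathsf M$ is accessible and $\D^\atwo$ cocomplete, $\A_1 = \Alg M$ is cocomplete, in particular has reflexive coequalisers; so $\phi^\ast = V_1$, being the algebraic functor induced by a monad morphism whose target has reflexive coequalisers, admits a left adjoint $K_1$ by the adjoint lifting theorem~\cite{Johnstone1975Adjoint}. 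This establishes existence of the projective lifting. For accessibility, note that $U^{\mathsf T^\atwo} V_1 = U^{\mathsf M}$, and that both forgetful functors $U^{\mathsf M}$ and $U^{\mathsf T^\atwo}$ create $\kappa$-filtered colimits, their monads being $\kappa$-accessible; hence $V_1$ preserves $\kappa$-filtered colimits, while $K_1$, as a left adjoint, preserves all colimits. Therefore $\mathsf R' = V_1 K_1$ preserves $\kappa$-filtered colimits, and the lifted \awfs is accessible.

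The main obstacle is concentrated in the middle step: recasting the bespoke ``lifting'' pullback $\A_1$ as the algebras for the coproduct monad $\mathsf M$, and securing the existence and accessibility of $\mathsf M$ over the merely \emph{cocomplete} base $\D^\atwo$ (rather than a locally presentable one, as in Proposition~\ref{prop:2}). Once that identification is in hand the remaining arguments are formal, since the hard analytic content---convergence of the free-algebra construction---is exactly what the accessibility hypotheses on $\mathsf T$ and $\mathsf R$, together with cocompleteness of $\D$, are there to guarantee.
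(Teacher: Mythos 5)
Your proposal is correct and follows essentially the same route as the paper: both form the pullback defining the projective lifting along $U^{\mathsf T}$, identify its vertex $\A_1$ with the category of algebras for the coproduct monad $\mathsf T^\atwo + \mathsf R$ on $\D^\atwo$ (existence and accessibility of which come from~\cite[Theorem~27.1]{Kelly1980A-unified}, with $\Alg{(\thg)}$ carrying the defining pushout of monads to a pullback of algebra categories), and then identify $V_1$ with restriction along the coprojection out of $\mathsf T^\atwo$. The one divergence is the final step: the paper obtains the left adjoint to this algebraic functor directly from~\cite[Theorem~25.4]{Kelly1980A-unified}, whereas you invoke the adjoint lifting theorem of~\cite{Johnstone1975Adjoint}, which needs the additional (true, but itself Kelly-dependent) fact that the category of algebras for an accessible monad on a cocomplete category is cocomplete, hence has reflexive coequalisers; either works, with the paper's citation being the more economical. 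On the other hand, your closing argument for accessibility of the lifted \awfs---that $V_1$ preserves $\kappa$-filtered colimits because both forgetful functors create them, while $K_1$ preserves all colimits, so the induced monad $V_1K_1$ is accessible---is spelled out more explicitly than in the paper, which leaves this point implicit.
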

\begin{proof}
As in~\eqref{eq:39}, we form the pullback of double categories as to
the left in
\begin{equation*}
  \cd[@C+0.5em]{
    \dcat A \ar[r] \ar[d]_V \pushoutcorner & \DAlg{R}
    \ar[d]^{U^\mathsf R}\\
    \Sq{\Alg{T}} \ar[r]^-{\Sq{U^\mathsf T}} & \Sq{\D}
  } \qquad
  \cd{
    \A_1 \ar[r] \ar[d]_{V_1} & \Alg{R} \ar[d]\\
    (\Alg{T})^\atwo \ar[r]^-{(U^\mathsf T)^\atwo} & \D^\atwo
  } \qquad
  \cd{
    \mathsf S \ar@{<-}[r]^\sigma \ar@{<-}[d]_\nu & \mathsf R \ar@{<-}[d]^{!} \\
    \mathsf T^\atwo \ar@{<-}[r]^{!} & \id
  }
\end{equation*}
and must show that the underlying $1$-component $V_1$, as in the
centre, has a left adjoint. Now, the monad $\mathsf R$ on $\D^\atwo$
is accessible by assumption, while $\mathsf T^\atwo$ is accessible
since $\mathsf T$ is so; whence
by~\cite[Theorem~27.1]{Kelly1980A-unified} the coproduct $\mathsf S =
\mathsf T^\atwo + \mathsf R$ exists in $\cat{MND}(\D^\atwo)$ and is
accessible. This coproduct is equally the pushout on the right above,
and~\cite{Kelly1980A-unified} guarantees that the functor
$\Alg{(\thg)}$ sends this square to a pullback of categories, so that we
may identify $V_1$ with $\nu^\ast \colon \Alg{S} \to \Alg{T^\atwo} =
(\Alg{T})^\atwo$. Now as $\D^\atwo$ is cocomplete and $\mathsf S$ is
accessible, this $\nu^\ast$ has a left adjoint
by~\cite[Theorem~25.4]{Kelly1980A-unified}.
\end{proof}

A standard application of the preceding results is to the construction
of \awfs on diagram categories. If the cocomplete $\C$ bears an
accessible \awfs $(\mathsf L, \mathsf R)$, and $\I$ is a small
category, then the functor $U \colon \C^\I \to \C^{\ob \I}$ given by
precomposition with $\ob \I \to \I$ has a left adjoint (given by left
Kan extension) and is strictly monadic; so that by
Proposition~\ref{prop:7} we may projectively lift the pointwise \awfs
on $\C^{\ob \I}$ along $U$ to obtain the \emph{projective} \awfs on
$\C^\I$. If $\C$ is moreover complete, then $U$ also has a right
adjoint given by right Kan extension; if it is moreover locally
presentable then we may apply Proposition~\ref{prop:2}(b) to
injectively lift the pointwise \awfs along $U$, so obtaining the
\emph{injective} \awfs on $\C^\I$.

\section{Cofibrant generation by a category}
\label{sec:cofibrant-generation-1}
For our next application of Theorem~\ref{thm:recognition}, we use it
to give a simplified treatment of algebraic weak factorisation systems
\emph{cofibrantly generated by a small category of morphisms}. An
\awfs is cofibrantly generated in this sense when its
$\mathsf R$-algebras are precisely morphisms equipped with a choice of
liftings against some small category $\J \to \C^\atwo$ of ``generating
cofibrations''. This extends and tightens the familiar notion of
cofibrant generation of a weak factorisation system by a set of
generating cofibrations; and the main result
of~\cite{Garner2009Understanding} extends and tightens Quillen's small
object argument to show that, under commonly-satisfied conditions on
$\C$, the \awfs cofibrantly generated by any small $\J \to \C^\atwo$
exists. The proof given there is quite involved; we will give an
simpler one exploiting Theorem~\ref{thm:recognition}. First we recall
the necessary background.

\subsection{Lifting operations}
\label{sec:lifting-operations}
Given categories $U \colon \J \to \C^\atwo$ and $V \colon \K \to
\C^\atwo$ over $\C^\atwo$, a \emph{$(\J,\K)$-lifting operation} is a
natural family of functions $\phi_{j,k}$ assigning to each $j \in \J$
and $k \in \K$ and each commuting square
\begin{equation*}
\cd{\mathrm{dom}\ Uj \ar[d]_{Uj} \ar[r]^-{u} &  \mathrm{dom}\ Vk \ar[d]^{Vk}  \\
\mathrm{cod}\ Uj \ar@{.>}[ur]|{\phi_{j,k}(u,v)} \ar[r]^-{v}  & \mathrm{cod}\ Vk}
\end{equation*}
a diagonal filler as indicated making both triangles commute.
Naturality here expresses that the $\phi_{j,k}$'s are components of a
natural transformation
$ \phi \colon \C^{\atwo}(U\thg, V?) \Rightarrow \C(\mathrm{cod}\
U\thg, \mathrm{dom}\ V?) \colon \J^\op \times \K \to \Set$.
Thus Section~\ref{sec:lift} describes the canonical
$(\Coalg{L}, \Alg{R})$-lifting operation associated to any \awfs.

The assignation taking $(\J,\K)$ to the collection of $(\J,
\K)$-lifting operations is easily made the object part of a
functor $\cat{Lift} \colon (\CAT / \C^\atwo)^\op \times (\CAT /
\C^\atwo)^\op \to \SET$; we now have, as
in~\cite[Proposition~3.8]{Garner2009Understanding}:

\begin{Prop}
  \label{prop:5}
  Each functor $\cat{Lift}(\J, \thg)$ and $\cat{Lift}(\thg, \K)$ is
  representable, so that we induce an adjunction
 \begin{equation} \cd[@C+1em]{(\CAT/\C^\atwo)^\op
     \ar@<-4.5pt>[r]_-{{\Rl[(\thg)]}} \ar@{}[r]|-{\bot} &
     \CAT/\C^{\atwo}\rlap{ .} \ar@<-4.5pt>[l]_-{{\Ll[(\thg)]}} }\label{eq:33}
 \end{equation}
\end{Prop}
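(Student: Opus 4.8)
The plan is to prove that each functor $\cat{Lift}(\J,\thg)$ and $\cat{Lift}(\thg,\K)$ is representable, and then deduce the adjunction formally. By duality it suffices to treat one of these; I will show that $\cat{Lift}(\J,\thg) \colon \CAT/\C^\atwo \to \SET$ is representable, yielding the functor $\Rl[(\thg)]$ as the representing object, with $\Ll[(\thg)]$ arising symmetrically. The key observation is that, for fixed $\J \colon \J \to \C^\atwo$, a $(\J,\K)$-lifting operation is by definition a natural family of fillers indexed by $j \in \J$ and $k \in \K$, so that it decomposes over the objects $k$ of $\K$: to give such an operation is to equip each $k \in \K$ with a choice of liftings against every $j \in \J$, naturally in $\J$, and to ask that these choices be natural in $\K$ as well.

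The heart of the matter is therefore to construct, for each fixed $\J \colon \J \to \C^\atwo$, a category $\Rl$ over $\C^\atwo$ whose objects are maps $g \in \C^\atwo$ equipped with a coherent choice of liftings against each $Uj$, natural in maps of $\J$, and whose morphisms are maps of $\C^\atwo$ respecting these liftings. Concretely, I would define an object of $\Rl$ to be a pair $(g, \psi_{\thg g})$ where $\psi_{j,g}(u,v)$ is a filler for each square $(u,v) \colon Uj \to g$, natural in $j$; this is exactly the data of a single element of a representable-type functor. The natural transformation description in Proposition~\ref{prop:5} shows that such data amount to an element of $\C^\atwo(U\thg, g) \Rightarrow \C(\mathrm{cod}\,U\thg, \mathrm{dom}\,g)$ over $\J^\op$, i.e. a section of the evident forgetful map. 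Then, given any $V \colon \K \to \C^\atwo$, a $(\J,\K)$-lifting operation is precisely a functor $\K \to \Rl$ over $\C^\atwo$: the universal property is that
\[
\cat{Lift}(\J,\K) \cong (\CAT/\C^\atwo)(\K, \Rl)\rlap{ ,}
\]
natural in $\K$, which is exactly the statement that $\cat{Lift}(\J,\thg)$ is represented by $\Rl \to \C^\atwo$.

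The verification that this bijection is natural in $\K$ is essentially formal once the objects and morphisms of $\Rl$ are set up correctly: a functor $F \colon \K \to \Rl$ over $\C^\atwo$ sends each $k$ to its underlying map $Vk$ together with a lifting operation against $\J$, and functoriality of $F$ is precisely the naturality of these liftings in $\K$; conversely a $(\J,\K)$-lifting operation assigns exactly this data. Dually, $\Ll[(\thg)]$ is built from $(\J,\K)$-lifting operations regarded as structure on the $j$-side, representing $\cat{Lift}(\thg,\K)$. Finally, having both families of functors representable, the adjunction~\eqref{eq:33} follows by the standard argument that a functor $\cat{Lift} \colon \A^\op \times \B^\op \to \SET$ with each $\cat{Lift}(a,\thg)$ and $\cat{Lift}(\thg,b)$ representable induces a mutually right adjoint pair between $\A^\op$ and $\B$; the two triangle identities reduce to the defining universal properties of the representing objects.

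I expect the main obstacle to be purely bookkeeping rather than conceptual: namely, being careful about the $\op$'s and the contravariance, so that $\Rl[(\thg)]$ and $\Ll[(\thg)]$ end up mutually \emph{right} adjoint (as an adjunction between $(\CAT/\C^\atwo)^\op$ and $\CAT/\C^\atwo$) rather than a plain adjunction, and checking that the representing objects assemble functorially in the \emph{other} variable so that $\Rl[(\thg)]$ is genuinely a functor and not merely defined objectwise. This functoriality—that a map $\J \to \J'$ over $\C^\atwo$ induces $\Rl[\J'] \to \Rl[\J]$ by restricting liftings—is the one point where a small argument is needed, but it follows directly from the naturality built into the definition of $\cat{Lift}$.
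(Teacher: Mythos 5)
Your proposal is correct and follows essentially the same route as the paper: the paper also proves representability by exhibiting the representing object $\Rl$ explicitly as the category of pairs $(g,\phi_{\thg g})$ of a map $g\in\C^\atwo$ and a $(\J,g)$-lifting operation, with morphisms the maps of $\C^\atwo$ commuting with the lifting functions (and dually for $\Ll$), the adjunction then following formally from the two-sided representability. The bookkeeping points you flag (contravariance, and functoriality of $\Rl[(\thg)]$ in $\J$ by restriction of liftings) are indeed the only remaining details, and they go through exactly as you describe.
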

\begin{proof}
  The category $\Rl \to \C^{\atwo}$ representing $\cat{Lift}(\J,
  \thg)$, has as objects, pairs of $g \in \C^\atwo$ and $\phi_{\thg
    g}$ a $(\J, g)$-lifting operation; and as maps $(g, \phi_{\thg g})
  \to (h,\phi_{\thg h})$, those $g \to h$ of $\C^\atwo$ which commute
  with the lifting functions. Dually, the representing category $\Ll
  \to \C^\atwo$ for $\cat{Lift}(\thg, \K)$ comprises objects $f \in
  \C^\atwo$ equipped with $(f, \K)$-lifting operations, together with
  the maps of $\C^\atwo$ between them that commute with the lifting
  functions.
\end{proof}

We have seen a particular instance of this result in
Section~\ref{sec:algebr-coalg-from} above; the category
$\Rl[\Coalg{\mathsf L}]$ described there is the representing object
for $\cat{Lift}(\Coalg{L},\thg)$, and the functor $\bar \Phi \colon
\Alg{R} \to \Rl[\Coalg{\mathsf L}]$ is the one induced by this
representation from the canonical lifting operation of
Section~\ref{sec:lift}.
 
\subsection{Cofibrant generation}
\label{sec:cofibr-fibr-gener}

If the maps $g \colon C \to D$ and $h \colon D \to E$ of $\C$ are equipped
with lifting operations $\phi_{\thg g}$ and $\phi_{\thg h}$ against a
category $U \colon \J \to \C^\atwo$, then their composite $hg$ also
bears a lifting operation $\phi_{\thg hg}$, defined as in~\eqref{eq:7}
by $\phi_{j, hg}(u, v) = \phi_{j, g}(u, \phi_{j, h}(gu, v))$. This
composition, together with the equipment of an identity map with its
\emph{unique} lifting operation, provides the necessary vertical
structure to make $\Rl \to \C^\atwo$ into a concrete double category
$\Drl \to \Sq{\C}$; dually, we can make $\Ll \to \C^\atwo$ into a
concrete double category $\Dll \to \Sq{\C}$.

  We now define an \awfs $(\mathsf L, \mathsf R)$ on $\C$ to be \emph{cofibrantly
    generated} by a small $\J \to \C^\atwo$ if $\DAlg{R} \cong \Drl$
  over $\Sq{\C}$. If this isomorphism is verified for a $\J$ which is
  large, we say instead that $\C$ is \emph{class-cofibrantly
    generated} by $\J \to \C^\atwo$.
There are dual notions of fibrant or class-fibrant generation,
involving an isomorphism $\Dll \cong \DCoalg{L}$ over $\Sq{\C}$;
however, these are markedly less prevalent than their duals in
categories of mathematical interest.

\subsection{Existence of cofibrantly generated awfs}
In~\cite[Definition~3.9]{Garner2009Understanding} is given the notion
of an \awfs being ``algebraically-free'' on $U \colon \J \to
\C^\atwo$: to which the notion of cofibrant generation given above,
though apparently different in form, is in fact
equivalent.\footnote{This follows from Proposition~\ref{prop:15} below.}
Theorem~4.4 of \emph{ibid}.\ thus guarantees, among other things, that
in a locally presentable category $\C$, the \awfs cofibrantly
generated by \emph{any} small $U \colon \J \to \C^\atwo$ exists. We
now use Theorem~\ref{thm:recognition} to give a shorter proof of this.

\begin{Prop}
  \label{prop:6}
  If $\C$ is locally presentable then the \awfs $(\mathsf L, \mathsf
  R)$ cofibrantly generated by any small $U \colon \J \to \C^\atwo$
  exists; its underlying monad $\mathsf R$ is algebraically-free on a pointed
  endofunctor and accessible.
\end{Prop}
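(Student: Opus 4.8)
The plan is to realize the cofibrantly generated \awfs as the double category of algebras for a suitable monad, and to invoke Theorem~\ref{thm:recognition} to recognize it as genuinely arising from an \awfs. First I would observe that the concrete double category $\Drl \to \Sq{\C}$ constructed in Section~\ref{sec:cofibr-fibr-gener} is right-connected: the vertical structure was defined precisely so that identities carry their unique lifting operation and composition is given by~\eqref{eq:7}, and the square~\eqref{eq:16} holds because the identity lifting operation on $1_B$ is the unique one. So the only hypothesis of Theorem~\ref{thm:recognition} that remains to be verified is condition~(i): that the forgetful functor $V_1 = {\Rl[\J]} \to \C^\atwo$ on vertical arrows is strictly monadic.

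To establish strict monadicity I would argue in two steps. By Proposition~\ref{prop:5}, the functor ${\Rl[\J]} \to \C^\atwo$ is the representing object for $\cat{Lift}(\J,\thg)$; its objects are maps $g$ equipped with a $(\J,g)$-lifting operation, and this is exactly the category of algebras for the pointed endofunctor underlying the monad $\mathsf R$ one constructs by Garner's algebraic small object argument. The key point is that, since $\C$ is locally presentable and $\J$ is small, the data of a $(\J,g)$-lifting operation is an accessibly-indexed family of lifting choices, so that ${\Rl[\J]}$ is locally presentable and the forgetful functor ${\Rl[\J]} \to \C^\atwo$ is accessible and creates those colimits (all of them, in fact) needed to apply a monadicity criterion. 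Concretely, I would use that this forgetful functor has a left adjoint, obtained by the algebraic small object argument of~\cite{Garner2009Understanding}, and that it strictly creates the coequalisers of $V_1$-absolute coequaliser pairs—lifting operations being determined pointwise by chosen fillers, such coequalisers lift uniquely—so that Beck's strict monadicity theorem applies. The accessibility of all functors involved, which follows from local presentability of $\C$ together with the smallness of $\J$ via~\cite{Gabriel1971Lokal}, simultaneously yields that $\mathsf R$ preserves $\kappa$-filtered colimits for some $\kappa$, hence is accessible.

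With strict monadicity in hand, Theorem~\ref{thm:recognition} produces an \awfs $(\mathsf L, \mathsf R)$ whose double category of $\mathsf R$-algebras is isomorphic to $\Drl$ over $\Sq{\C}$, which is precisely the statement that $(\mathsf L, \mathsf R)$ is cofibrantly generated by $\J$. Finally, the claim that $\mathsf R$ is algebraically-free on a pointed endofunctor should fall out of the isomorphism ${\Rl[\Coalg{L}]} \cong (R,\eta)\text-\cat{Alg}$ noted after Lemma~\ref{lem:2}: since $\Alg{R} \cong {\Rl[\J]}$ is the category of algebras for the \emph{pointed endofunctor} generated by $\J$ (lifting operations being structure for a pointed endofunctor, not a priori a monad), and this category coincides with the Eilenberg--Moore category of $\mathsf R$, the monad $\mathsf R$ is algebraically-free on that pointed endofunctor in the sense of~\cite[\S22]{Kelly1980A-unified}. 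The main obstacle I anticipate is the strict monadicity of~(i): the subtlety is in checking that $V_1$ creates exactly the right coequalisers and that the left adjoint exists, rather than merely that ${\Rl[\J]}$ is monadic up to equivalence—here the explicit combinatorial description of lifting operations, and the accessibility bookkeeping, are what must be handled with care.
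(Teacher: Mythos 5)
Your proposal reproduces the paper's skeleton---right-connectedness of $\Drl \to \Sq{\C}$, reduction via Theorem~\ref{thm:recognition} to strict monadicity of $V_1 \colon \Rl \to \C^\atwo$, and strict monadicity via a left adjoint plus creation of $V_1$-absolute coequalisers---but the step that carries all the weight is outsourced rather than proved. You obtain the left adjoint to $V_1$ by citing ``the algebraic small object argument of~\cite{Garner2009Understanding}''. That is precisely the theorem this proposition restates: the paper points out that Proposition~\ref{prop:6} is less general than \cite[Theorem~4.4]{Garner2009Understanding}, and that the purpose of the present proof is to replace that ``quite involved'' argument by a short self-contained one. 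Your citation is not circular in a strict logical sense, but it leaves your proposal with no actual construction (and, taken at face value, one could equally cite Garner's theorem for the whole proposition, modulo the equivalence of his notion of algebraic freeness with the present notion of cofibrant generation, which the paper only establishes later via Proposition~\ref{prop:15}). What the paper actually does, and what is missing from your proposal, is this: an object of $\Rl$ is a map $g$ together with a section of $\psi_{\thg g} \colon \C(\mathrm{cod}\ U\thg,\, \mathrm{dom}\ g) \Rightarrow \C^\atwo(U\thg, g)$, so that $\Rl$ is the pullback of $\cat{SplEpi}([\J^\op,\Set]) \to [\J^\op,\Set]^\atwo$ along $\psi \colon \C^\atwo \to [\J^\op,\Set]^\atwo$; one shows $\psi$ has a left adjoint $K$, identifies $\cat{SplEpi}([\J^\op,\Set])$ with algebras for a pointed endofunctor $(T,\eta)$, and then exhibits $\Rl$, via \cite[Theorem~2.1]{Wolff1978Free}, as the category of algebras for the pointed endofunctor $(P,\rho)$ on $\C^\atwo$ obtained as a pushout of $K\eta\psi$ along the counit $K\psi \Rightarrow 1$. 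Free $(P,\rho)$-algebras then exist by \cite[Theorem~22.3]{Kelly1980A-unified}, because $P$ preserves $\kappa$-filtered colimits---which holds because the domains and codomains of the $Uj$ are $\kappa$-presentable, so that $\psi$ preserves $\kappa$-filtered colimits, while $K$ and $T$ are cocontinuous.

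This gap propagates into the two remaining claims. Your accessibility argument (``accessibility of all functors involved \ldots via~\cite{Gabriel1971Lokal}'') is an assertion, not a proof; the actual argument is the one just sketched, and it is exactly where smallness of $\J$ and local presentability of $\C$ enter. More seriously, your argument for algebraic freeness misapplies the remark following Lemma~\ref{lem:2}: that remark identifies $\Rl[\Coalg{L}]$ with $(R,\eta)\text-\cat{Alg}$ for the monad of an \emph{already existing} \awfs, and says nothing about the generating category $\J$. To give content to ``$\mathsf R$ is algebraically-free on a pointed endofunctor'' you must exhibit that endofunctor; in the paper it is the $(P,\rho)$ above, and algebraic freeness is then \cite[Proposition~22.2]{Kelly1980A-unified}. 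Your parenthetical observation that lifting operations are structure for a pointed endofunctor rather than for a monad gestures at the right idea, but without the construction of $(P,\rho)$ (or an equivalent one) the claim is unsupported.
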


This result as stated is less general
than~\cite[Theorem~4.4]{Garner2009Understanding}, which also deals
with certain kinds of non-locally presentable $\C$. Though we have no
need for this extra generality here, let us note that reincorporating
it would simply be a matter of adapting the final paragraph of the
following proof.

\begin{proof}
  It is easy to see that $V \colon \Drl \to \Sq{\C}$ is
  right-connected; so by Theorem~\ref{thm:recognition}, it will
  comprise the double category of algebras for the desired \awfs as
  soon as $V_1 \colon \Rl \to \C^\atwo$ is strictly monadic. Now an
  object of $\Rl$ is equally a map $g$ of $\C$ together with a section
  $\phi_{\thg g}$ of the natural transformation
  \begin{equation}\label{eq:35}
    \psi_{\thg g} \colon \C(\mathrm{cod}\ U\thg, \mathrm{dom}\ g)
    \Rightarrow \C^\atwo(U\thg, g) \colon \J^\op \to \Set
  \end{equation}
  whose $j$-component sends $m \colon \mathrm{cod}\ Uj \to
  \mathrm{dom}\ g$ to
  $(m\cdot Uj, gm) \colon Uj \to g$; while a map
  $(g, \phi_{\thg g}) \to (h, \phi_{\thg h})$ of $\Rl$ is a map
  $g \to h$ of $\C^\atwo$ for which the induced
  $\psi_{\thg g} \to \psi_{\thg h}$ in $[\J^\op, \Set]^\atwo$ commutes
  with the sections. We thus have a pullback
  \begin{equation}
    \cd[@-0.4em]{
      \Rl \ar[d]_{V_1} \ar[r]^{\phantom{W}} \pushoutcorner & \cat{SplEpi}([\J^\op,
      \Set]) \ar[d] \\
      \C^\atwo \ar[r]^-{\psi} & [\J^\op, \Set]^\atwo\rlap{ .}}
    \label{eq:32}
  \end{equation}

  We first show that $\psi$ has a left adjoint. The composite
  $\mathrm{cod} \cdot \psi \colon \C^\atwo \to [\J^\op, \Set]$ is the
  singular functor $\C^\atwo(U, 1)$ which has left adjoint $F_1$ given
  by the left Kan extension of $U \colon \J \to \C^\atwo$ along the
  Yoneda embedding; while $\mathrm{dom} \cdot \psi$ is the functor
  $\C(\mathrm{cod}\cdot U, \mathrm{dom}) \cong \C^\atwo(\mathrm{id}
  \cdot \mathrm{cod} \cdot U, 1)$
  (as $\mathrm{id} \dashv \mathrm{dom} \colon \C^\atwo \to \C$) and so
  also has a left adjoint $F_2$. The natural transformation
  $\mathrm{dom} \cdot \psi \to \mathrm{cod} \cdot \psi$ between right
  adjoints transposes to one $\alpha \colon F_1 \to F_2$ between left
  adjoints, using which $\psi$ too has a left adjoint $K$ sending
  $f \colon X \to Y$ to the pushout of $F_1 f \colon F_1X \to F_1Y$
  along $\alpha_X \colon F_1X \to F_2X$.

  Now as we noted in Section~\ref{sec:split-epimorphisms} above,
  $\cat{SplEpi}([\J^\op, \Set])$ may be identified with the
  category of algebras for a pointed endofunctor $(T,
  \eta)$ on $[\J^\op, \Set]^\atwo$ with unit $\eta_f \colon f \to
  Tf$ given by~\eqref{eq:11}. It follows that $\J^\pitchfork$ is
  isomorphic over $\C^\atwo$ to the category of algebras for the
  pointed endofunctor $(P,\rho)$ in the pushout square 
  \begin{equation*}
    \cd[@C+0.5em]{
      K \psi \ar[d]_{\varepsilon} \ar[r]^{K\eta\psi} & K T \psi \ar[d] \\
      1 \ar[r]^-{\rho} & \pullbackcorner P}
  \end{equation*}
  (c.f.~\cite[Theorem~2.1]{Wolff1978Free}). An easy consequence of
  this identification is that $V_1$ strictly creates coequalisers of
  $V_1$-absolute coequaliser pairs; so it will be strictly monadic
  whenever it has a left adjoint---that is, whenever free $(P,
  \rho)$-algebras exist. We show this existence using results
  of~\cite{Kelly1980A-unified}.

  Since $\C$ is locally presentable, there is a regular cardinal
  $\kappa$ such that the domain and codomain of each $Uj$ is
  $\kappa$-presentable; thus the domain and codomain of each $\psi_{j
    \thg} \colon \C(\mathrm{cod}\ Uj,\, \mathrm{dom}\ \thg)
  \Rightarrow \C^\atwo(Uj, \thg)$ preserves $\kappa$-filtered
  colimits, so that $\psi$ does so too. Now $K$, being a left adjoint,
  is cocontinuous, while $T$ is so by inspection of~\eqref{eq:11};
  whence the $P$ of~\eqref{eq:32} is a pushout of functors preserving
  $\kappa$-filtered colimits, and so also preserves them. Thus
  by~\cite[Theorem~22.3]{Kelly1980A-unified}, free $(P,
  \rho)$-algebras exist, which is to say that $V_1$ has a left adjoint
  $F_1$ as required. Since $P$ preserves $\kappa$-filtered
  colimits, $V_1$ creates them, and so the induced monad $\mathsf R =
  V_1F_1$ preserves them; it is thus accessible. Finally, it follows
  from~\cite[Proposition~22.2]{Kelly1980A-unified} that $\mathsf R$ is
  the algebraically-free monad on the pointed endofunctor $(P, \rho)$.
\end{proof}
This result provides a rich source of algebraic weak factorisation
systems; in particular, we may make any cofibrantly generated weak
factorisation system on a locally presentable category into an \awfs
by applying the result to its set $J$ of generating cofibrations, seen
as a discrete category over $\C^\atwo$. For applications that make
serious use of the algebraicity so obtained,
see~\cite{Garner2009A-homotopy-theoretic,Garner2010Homomorphisms}; for
applications utilising the extra generality of cofibrant generation by
a small \emph{category}, rather than a small set of morphisms, see
\cite{Barthel2013Six-model,Batanin2013Multitensor}.

\subsection{Inadequacy of cofibrant generation by a category}
\label{sec:inad-cofibr-gener}
Proposition~\ref{prop:6} tells us that the monad of an \awfs
cofibrantly generated by a small category has two specific properties:
it is accessible, and it is algebraically-free on a pointed
endofunctor. Our experience of monads suggests that the former
property should be more common than the latter, and in fact this is
the case: many \awfs of practical interest verify the accessibility,
but not the freeness. For example:
\begin{Prop}\label{prop:10}
  The monad of the \awfs for lalis on $\Cat$ is accessible, but not
  algebraically-free on a pointed endofunctor; in particular, this
  \awfs is
  not cofibrantly generated by a small category.
\end{Prop}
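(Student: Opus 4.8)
The plan is to prove accessibility and the failure of algebraic-freeness separately, since these are genuinely different phenomena.

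For accessibility, I would argue that the \awfs for lalis on $\Cat$ is precisely the \awfs induced via the cocategory framework of Section~\ref{sec:via-cocategories}, applied to the free lali $\L$ in $\cat{Cat}$, with $\V = \cat{Cat}$ and base $\C = \Cat$. Its monad $\mathsf R$ is computed using left Kan extension along $j \colon \atwo \to \L$, which is an application of an oplax colimit of arrows. Since $\Cat$ is locally presentable and the oplax colimit is a finite weighted colimit by a finitely-presentable weight, the functor $R \colon \Cat^\atwo \to \Cat^\atwo$ preserves $\kappa$-filtered colimits for a suitable $\kappa$ (in fact $\kappa = \aleph_0$ works, since the relevant colimit-construction involves only finite data). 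Thus $\mathsf R$ is accessible, and as remarked before Proposition~\ref{prop:2}, accessibility of the monad is equivalent to that of the comonad, so the \awfs is accessible.

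\textbf{Failure of algebraic-freeness.} The hard part is showing that $\mathsf R$ is \emph{not} algebraically-free on its underlying pointed endofunctor $(R,\eta)$. By definition, this freeness would require that every algebra for the pointed endofunctor $(R,\eta)$ is already an $\mathsf R$-algebra, i.e. that $\Alg{R} \to (R,\eta)\text-\cat{Alg}$ is an isomorphism; equivalently, via the identification of Section~\ref{sec:algebr-coalg-from}, that every \emph{coalgebra lifting operation} against $\Coalg{L}$ automatically satisfies the associativity condition~\eqref{eq:37}. The strategy is to exhibit a functor $g$ in $\Cat$ together with a lifting operation $\phi_{\thg g}$ against the $\mathsf L$-coalgebras that is natural in coalgebra maps but \emph{fails}~\eqref{eq:37}; such a $\phi$ is a $(R,\eta)$-algebra that is not an $\mathsf R$-algebra. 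Here I would exploit the fact that a lali-algebra structure on $g$ encodes genuinely associative choices of lifts (built from the counit-free adjunction data), whereas a bare pointed-endofunctor algebra makes only a single unassociated choice; the coherence imposed by~\eqref{eq:37} is exactly the content that distinguishes the two. Concretely, I expect the simplest counterexample to involve a functor $g$ whose fibres carry enough morphisms that two different iterated lifts (first against the cofree coalgebra $\fl{\r f}$, then reassembling) disagree, so that~\eqref{eq:37} visibly fails while naturality still holds.

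\textbf{Conclusion.} Finally, the last clause follows immediately from the preceding two facts together with Proposition~\ref{prop:6}: that proposition shows that any \awfs cofibrantly generated by a small category has a monad which is algebraically-free on a pointed endofunctor. Since the lali monad is \emph{not} so, the \awfs for lalis cannot be cofibrantly generated by a small category.
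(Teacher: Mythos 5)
Your accessibility argument is fine and is essentially the paper's: the lali monad is computed by restriction and left Kan extension along $j \colon \atwo \to \L$, hence is cocontinuous and in particular accessible.

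The non-freeness argument, however, contains a genuine logical gap. You assert that, ``by definition,'' failure of algebraic-freeness amounts to the inclusion $\Alg{R} \to (R,\eta)\text-\cat{Alg}$ failing to be an isomorphism, and you therefore set out to find a natural lifting operation violating~\eqref{eq:37}. But the statement to be proved---and the property actually delivered by Proposition~\ref{prop:6}, which your concluding paragraph must contradict---is that $\Alg{R}$ is not isomorphic over $\Cat^\atwo$ to the category of algebras of \emph{any} pointed endofunctor. Proposition~\ref{prop:6} produces algebraic-freeness on a pointed endofunctor $(P,\rho)$ constructed during the small object argument, not on the underlying $(R,\eta)$, and freeness on some pointed endofunctor does not imply freeness on the underlying one. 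For instance, the free monad on the pointed endofunctor $X \mapsto X + (X \times X)$ on $\Set$ (pointed by the coproduct injection) is algebraically-free on that endofunctor---its algebras are magmas---yet it is not algebraically-free on its own underlying pointed endofunctor: an algebra for the latter is an arbitrary retraction $\alpha$ of the unit ``generators into binary trees'', so on $X = \{x,y\}$ one may set $\alpha(xy) = x$ and $\alpha((xy)y) = y$, whereas the monad-algebra axiom would force $\alpha((xy)y) = \alpha(\alpha(xy)\cdot y) = \alpha(xy) = x$. Consequently, even if your construction were completed, you would only have excluded $(R,\eta)$, and no contradiction with Proposition~\ref{prop:6} follows. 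There is also a secondary gap: the crucial counterexample is never exhibited---the functor $g$ and the operation $\phi$ failing~\eqref{eq:37} are left as an expectation, not constructed.

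The paper's proof avoids both problems with a single observation that is uniform in the pointed endofunctor: for \emph{every} pointed endofunctor $(T,\rho)$, the class of objects admitting $(T,\rho)$-algebra structure is closed under retracts (transport the structure map $a$ along a retraction $i, r$ as $r \cdot a \cdot Ti$; the unit law survives by naturality of $\rho$). It then suffices to show lalis are not retract-closed: a lali structure on $\A \to 1$ is precisely a choice of terminal object of $\A$, and the free idempotent $\B$ is a retract in $\Cat^\atwo$ of the category obtained from $\B$ by freely adjoining a terminal object, yet $\B$ itself has none. If you want to salvage your strategy, you must replace the test ``is every $(R,\eta)$-algebra an $\mathsf R$-algebra?'' by an invariant of algebra categories of pointed endofunctors that does not depend on which pointed endofunctor is used---which is exactly what retract-closure is.
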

\begin{proof}
  The monad $\mathsf R$ at issue is given by left Kan extension and
  restriction along the $2$-functor $j \colon \atwo \to \L$ of
  Section~\ref{sec:lalis};  it is thus cocontinuous and in particular, accessible.
  On the other hand, if it were algebraically-free on a pointed
  endofunctor, then its category of
  algebras---$\cat{Lali}(\Cat)$---would be isomorphic to the category
  of algebras for a pointed endofunctor, and as such would be
  retract-closed: meaning that, for any lali $f \dashv u$ and any
  retract $g$ of $f$ in $\Cat^\atwo$, there would exist lali structure
  on $g$. Now, to equip the unique functor $\A \to 1$ with the
  structure of a lali is to specify a terminal object of $\A$; so it
  is enough to describe a category $\A$ with a terminal object, and a
  retract $\B$ of $\A$ without one. To this end, take $\B$ to be the
  free idempotent $t^2 = t$ as on the left in:
\begin{equation*}
\cd{
0 \ar@(ul,dl)_{t}
} \quad \quad
\cd{{} \ar@{|->}[r]^{} & {}
} \quad \quad
\cd{
0 \ar@(ul,dl)_{t} \ar[r]^{!} & 1
} \quad \quad
\cd{{} \ar@{|->}[r]^{} & {}
} \quad \quad
\cd{
0 \ar@(ul,dl)_{t}
}
\end{equation*}
and take $\A$ to be $\B$ with a terminal object $1$ freely adjoined.
The inclusion of $\B$ into $\A$ admits a retraction, which fixes $t$
and sends $! \colon 0 \to 1$ to the identity on $0$. Yet $\B$ does not
admit a terminal object.
\end{proof}
In particular, this result tells us that, since lalis are not closed
under retracts, they cannot be characterised as the right class of
maps for a mere weak factorisation system; thus the algebraicity is,
in this case, essential.

\section{Cofibrant generation by a double category}
\label{sec:cofibr-gener-double}
In light of Proposition~\ref{prop:10}, it is natural to ask whether
there is a more refined notion of cofibrant generation which
encompasses such examples of \awfs as the one for lalis on $\Cat$. In
this section, we describe such a notion; it involves lifting
properties against a small \emph{double} category, rather than a small
category, of generating cofibrations. 

\subsection{Double-categorical lifting operations}
\label{sec:double-categ-lift}
Let $U \colon \dcat J \to \Sq{\C}$ and $V \colon \dcat K \to
\Sq{\C}$ be double categories over $\Sq{\C}$. We define a
\emph{$(\dcat J,\dcat K)$-lifting operation} to be a $(\J_1,
\K_1)$-lifting operation in the sense of
Section~\ref{sec:lifting-operations} which is also compatible
  with vertical composition in $\mathbb J$ and $\mathbb K$, in the
  sense that
\begin{equation}
  \begin{aligned}
    \phi_{j, \ell \cdot k}(u, v) &= \phi_{j, k}(u, \phi_{j, \ell}(Vk
    \cdot u, v)) \\ \text{and} \quad \phi_{j \cdot i, k}(u, v) &=
    \phi_{j, k}(\phi_{i, k}(u, v \cdot Uj), v)
  \end{aligned}
\label{eq:31}\end{equation}
for all vertically composable maps $j \cdot i \colon A \to B \to C$
and $\ell \cdot k \colon D \to E \to F$ in $\mathbb J$ and in $\mathbb
K$. For example, by virtue of~\eqref{eq:7} and its dual, the lifting
operation~\eqref{eq:4} associated to an \awfs is an $(\DCoalg{L},
\DAlg{R})$-lifting operation. As before, the assignation sending
$\dcat J$ and $\dcat K$ to the collection of $(\dcat J, \dcat
K)$-lifting operations underlies a functor $\dcat{Lift} \colon (\DBL /
\Sq{\C})^\op \times (\DBL / \Sq{\C})^\op \to \SET$; and also as
before, we have:
\begin{Prop}
  \label{prop:1}
  Each functor $\dcat{Lift}(\dcat J, \thg)$ and $\dcat{Lift}(\thg,
  \dcat K)$ is representable, so that the adjunction~\eqref{eq:33}
  extends to one
\begin{equation} \cd[@C+1em]{(\DBL/\Sq{\C})^\op
  \ar@{}[r]|-{\bot} \ar@<-4.5pt>[r]_-{{\Drl[(\thg)]}} & \DBL/\Sq{\C}\rlap{ .}
  \ar@<-4.5pt>[l]_-{{\Dll[(\thg)]}}}\label{eq:34}
\end{equation}
\end{Prop}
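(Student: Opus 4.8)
The plan is to mimic the proof of Proposition~\ref{prop:5}, building the two representing objects on top of the one-dimensional representing objects $\Rl[\J_1]$ and $\Ll[\K_1]$ that it supplies. The key observation is that, by definition, a $(\dcat J, \dcat K)$-lifting operation is precisely a $(\J_1, \K_1)$-lifting operation that in addition satisfies the two compatibility conditions~\eqref{eq:31}: the first governs vertical composition in $\dcat K$, and the second governs vertical composition in $\dcat J$. So the representing objects should be obtained from $\Rl[\J_1]$ and $\Ll[\K_1]$ by recording the first condition as vertical structure and the second as a restriction on vertical arrows.

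Concretely, I would first construct $\Drl[\dcat J]$ as a sub-double-category of the concrete double category built in Section~\ref{sec:cofibr-fibr-gener} from the underlying category $\J_1 \to \C^\atwo$, which I write $\Drl[\J_1]$. It has the same objects and horizontal arrows, and the same squares, but as vertical arrows only those $(g, \phi_{\thg g})$ whose lifting operation is compatible with vertical composition in $\dcat J$, i.e.\ satisfies the second equation of~\eqref{eq:31}. Since this is a condition on objects alone, the vertical arrows and squares so selected form a full subcategory of $\Rl[\J_1]$, still faithful over $\C^\atwo$; it then remains to check that this subcategory is closed under the vertical composition and identities of $\Drl[\J_1]$, so that $\Drl[\dcat J]$ is a concrete double category over $\Sq{\C}$. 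Dually one constructs $\Dll[\dcat K]$.

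With $\Drl[\dcat J]$ in hand, the representability is a matter of unwinding definitions. A double functor $\dcat K \to \Drl[\dcat J]$ over $\Sq{\C}$ is the identity on objects and horizontal arrows, and so amounts to a functor $\K_1 \to (\Drl[\dcat J])_1$ over $\C^\atwo$ respecting vertical composition and identities. By Proposition~\ref{prop:5} such a functor into $\Rl[\J_1]$ is the same as a $(\J_1, \K_1)$-lifting operation; the requirement that it land in the $\dcat J$-compatible subcategory $(\Drl[\dcat J])_1$ records the second equation of~\eqref{eq:31}, while the requirement that it respect vertical composition records the first. Thus $\DBL/\Sq{\C}(\dcat K, \Drl[\dcat J]) \cong \dcat{Lift}(\dcat J, \dcat K)$ naturally in $\dcat K$, and dually $\DBL/\Sq{\C}(\dcat J, \Dll[\dcat K]) \cong \dcat{Lift}(\dcat J, \dcat K)$ naturally in $\dcat J$; these two natural isomorphisms exhibit $\Drl[(\thg)]$ and $\Dll[(\thg)]$ as a mutually adjoint pair, giving~\eqref{eq:34}. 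Since the representing objects lie over $\Rl[\J_1]$ and $\Ll[\K_1]$, the resulting $(\dcat J, \dcat K)$-lifting operations are exactly the sub-collection of $(\J_1, \K_1)$-lifting operations cut out by~\eqref{eq:31}, and in this sense~\eqref{eq:34} extends~\eqref{eq:33}.

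The main obstacle I expect is the well-definedness of $\Drl[\dcat J]$ as a double category, and specifically the verification that the vertical composite of two $\dcat J$-compatible lifting operations is again $\dcat J$-compatible; the case of identities is immediate, since the unique lifting operation on an identity map returns its lower edge and so trivially satisfies the second equation of~\eqref{eq:31}. The composite case is an equational manipulation combining the composition formula defining vertical composition in $\Drl[\J_1]$ with the $\dcat J$-compatibility of the two factors, in the same spirit as the composition calculations of Section~\ref{sec:double-categ-algebr}; interleaving the two nested recursions is where the bookkeeping is heaviest, though no idea beyond~\eqref{eq:31} is needed.
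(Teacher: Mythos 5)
Your proposal is correct and takes essentially the same approach as the paper: its proof likewise defines $\Drl[\dcat J]$ as the sub-double category of $\Drl[\J_1]$ with the same objects and horizontal arrows, just those vertical arrows $(g,\phi_{\thg g})$ whose lifting operations respect vertical composition in $\dcat J$, and all squares between them, with $\Dll[\dcat K]$ defined dually. The representability unwinding and the closure-under-vertical-composition check that you flag are left implicit in the paper, and they go through exactly as you indicate.
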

\begin{proof}
  Given $U \colon \dcat J \to \Sq{\C}$, we have as in
  Section~\ref{sec:cofibr-fibr-gener} the concrete double category
  $\Drl[\J_1] \to \Sq{\C}$. The representing object $\Drl[\dcat J] \to
  \Sq{\C}$ for $\dcat{Lift}(\dcat J, \thg)$ is the sub-double category
  of $\Drl[\J_1]$ with the same objects and horizontal arrows, and
  just those vertical arrows $(g, \phi_{\thg g})$ whose lifting
  operations respect vertical composition in $\dcat J$, together with
  all cells between them. The representing object $\Dll[\dcat K] \to
  \Sq{\C}$ for $\dcat{Lift}(\thg, \dcat K)$ is defined dually.
\end{proof}

Note that the $2$-functor $(\thg)_1 \colon \DBL \to \CAT$ sending a double
category to its vertical category is represented by the free vertical
arrow $\atwo_v$, and consequently has a left adjoint, sending $\C$ to
the product of $\atwo_v$ with the free \emph{horizontal} double
category on $\C$. This lifts to a left adjoint $\dcat F$ for the
induced functor $(\thg)_1 \colon \DBL / \Sq{\C} \to \CAT / \C^\atwo$
on slice categories; and as there are no non-trivial vertical
composites in $\dcat F\J$, we have that in fact $\Drl = \Drl[(\dcat F
\J)]$; thus the double category structure on $\Drl$, for which we
offered no abstract justification in
Section~\ref{sec:cofibr-fibr-gener} above, is now explained in terms
of the adjunction~\eqref{eq:34}.

\subsection{Cofibrant generation by a double category}
\label{sec:cofibr-fibr-gener-1}
An algebraic weak factorisation system $(\mathsf L, \mathsf R)$ is 
said to be \emph{cofibrantly generated} by a small double category
$\dcat J \to \Sq{\C}$ if $\DAlg{R} \cong \Drl[\dcat J]$ over
$\Sq{\C}$; if we have this isomorphism for a large $\dcat J$, then we
call $(\mathsf L, \mathsf R)$ \emph{class-cofibrantly} generated by
$\dcat J \to \Sq{\C}$. Once again, we have the dual notions of fibrant
and class-fibrant generation by a double category.

If, as in the preceding section, we identify a category over
$\C^\atwo$ with the free double category thereon, then this definition
is a conservative extension of that given in
Section~\ref{sec:cofibr-fibr-gener} above. However, by contrast with
Proposition~\ref{prop:10}, we have:

\begin{Prop}
  \label{prop:11}
  The \awfs for lalis on $\Cat$ is cofibrantly generated by a small
  double category.
\end{Prop}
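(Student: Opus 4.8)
The plan is to exhibit an explicit small double category $\dcat J \to \Sq{\Cat}$ and to prove $\DAlg R \cong \Drl[\dcat J]$ over $\Sq{\Cat}$ by identifying both sides, naturally in the functor being factored, with the structure of a lali. Recall from Section~\ref{sec:lalis} that the $\mathsf L$-coalgebras are the categorical cofibrations---the functors arising as pullbacks of the domain inclusion $\aone \to \atwo$---together with the pullback squares between them. Since the free-lali monad $\mathsf R$ is given by left Kan extension along $j \colon \atwo \to \L$, it is cocontinuous (as noted in the proof of Proposition~\ref{prop:10}), so the whole \awfs is finitary; this suggests taking for $\dcat J$ the full sub-double category of $\DCoalg L$ on the \emph{finite} categories. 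As the isomorphism classes of finite categories form a set, and composites of categorical cofibrations and pullback squares between finite categories are again finite, $\dcat J$ is a well-defined small double category over $\Sq{\Cat}$, and $\Drl[\dcat J]$ is defined by Proposition~\ref{prop:1}.

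The heart of the argument is a bijection, natural in $g$, between $(\dcat J, g)$-lifting operations and lali structures $(p,\eta)$ on a functor $g \colon A \to B$. Write $\delta \colon \aone \to \atwo$ for the domain inclusion and $j_0 \colon \emptyset \to \aone$ for the unique such functor; both are categorical cofibrations, the latter being the pullback of $\delta$ along the codomain inclusion $\aone \to \atwo$. Given a lifting operation $\phi$, I would read off the section on objects as $pb := \phi_{j_0, g}(b)$, an object with $g(pb) = b$, and recover the unit together with the action of $p$ on morphisms from the lifts against $\delta$: explicitly $\eta_a := \phi_{\delta, g}(a, 1_{ga})$ and $p\beta := \phi_{\delta, g}(pb_0, \beta)$ for $\beta \colon b_0 \to b_1$ in $B$. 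The pullback square just mentioned is a cell of $\dcat J$, and naturality~\eqref{eq:4} of $\phi$ across it forces the codomain of each $\delta$-lift to be $p$ of the relevant codomain; this makes $\eta_a \colon a \to pg(a)$ and $p\beta \colon pb_0 \to pb_1$ well-typed, while $gp = 1$ and $g\eta = 1$ hold by construction. Functoriality of $p$, naturality of $\eta$, and the triangle identity $\eta p = 1$ would then be squeezed out of the compatibility~\eqref{eq:31} of $\phi$ with the vertical composites of cofibrations $\aone \hookrightarrow \atwo \hookrightarrow \athree$, which asserts that a two-step lift is computed as two successive one-step lifts. Conversely, the canonical lifting operation $(a,\beta) \mapsto p\beta \cdot \eta_a$ of a lali visibly obeys the naturality and vertical-composition laws~\eqref{eq:31} defining a $(\dcat J, g)$-lifting operation.

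Finally I would verify that this bijection is natural in $g$ and respects vertical composition: a cell $g \to g'$ commutes with all $\dcat J$-liftings precisely when it commutes with $p$ and $\eta$ (testing on the generators $j_0$ and $\delta$), which is exactly a lali morphism, while the composite lifting operation~\eqref{eq:7} corresponds under the bijection to the composite lali. Assembling these identifications yields the desired isomorphism of concrete double categories. The main obstacle is the middle step: checking that the naturality and vertical-composition constraints coming from $\dcat J$ are \emph{exactly} equivalent to the lali axioms---neither weaker nor stronger---and, in particular, that a $(\dcat J, g)$-lifting operation carries no information beyond $(p,\eta)$. For the latter one must use that every square into $g$ out of a finite categorical cofibration is reconstructed from its restrictions along the inclusions of the objects $\aone$ and arrows $\atwo$, so that the values of $\phi$ on all of $\dcat J$ are already determined by its values on these few generators; this density of the finite generators inside $\dcat J$ is what makes the small double category suffice.
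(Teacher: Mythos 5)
Your proposal is correct, but it takes a genuinely different route from the paper's proof. The paper first reformulates lali structure elementarily (a section $u$ on objects, sections $\gamma_a \colon B(fa,b) \to A(a,ub)$ on hom-sets, and two equations), and then hand-builds a \emph{minimal} generating double category: four objects $\cat 0, \cat 1, \cat 2, \cat 3$, vertical arrows \emph{freely} generated by the three initial-segment inclusions, and squares freely generated by four explicit pullback squares; the proof is then a finite, generator-by-generator match between lifting data and that reformulation. You instead take for $\dcat J$ the full sub-double category of $\DCoalg{L}$ on (representatives of) finite categories---all finite categorical cofibrations and pullback squares---and show that a $(\dcat J, g)$-lifting operation both determines and is determined by a lali structure, via the generators $\emptyset \to \aone$ and $\aone \to \atwo$ plus a determination argument: lifts against an arbitrary finite cofibration $A' \hookrightarrow X$ are forced, object-by-object and arrow-by-arrow, by naturality~\eqref{eq:4} at the pullback squares of $A' \hookrightarrow X$ along $\aone \to X$ and $\atwo \to X$, together with the composition law~\eqref{eq:31}. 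This is in effect the specialisation to lalis of the strategy the paper later uses to prove Theorem~\ref{thm:accessible} (cutting the canonical class-generation of Proposition~\ref{prop:3} down to a small dense piece), carried out by hand; it is conceptually smoother and explains \emph{why} a small generating double category must exist, at the cost of a much larger (though still small) $\dcat J$ and the extra determination step, whereas the paper's free presentation keeps every verification finite and yields the reusable explicit description of lali structure as a by-product. One imprecision you should repair: functoriality of $p$, naturality of $\eta$ and the triangle identity do \emph{not} follow from the vertical-composition law~\eqref{eq:31} alone, since that law only rewrites lifts against composites as iterated lifts and cannot constrain the $\delta$-lifts themselves; you also need naturality~\eqref{eq:4} at the pullback squares relating $\aone \to \atwo$ to $\atwo \to \athree$ (the analogues of the paper's generating squares (b) and (c)), which force the $\atwo \to \athree$-lifts to be computed from $\delta$-lifts and thereby force functoriality. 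Since those squares are cells of your $\dcat J$ and naturality is already part of your framework, this is a misattribution rather than a missing ingredient; likewise, the cleanest way to see that a lali yields a lifting operation against \emph{all} of $\dcat J$, not merely against $\delta$, is to restrict the canonical operation supplied by Proposition~\ref{prop:3}.
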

\begin{proof}
  We first claim that to equip a functor $f \colon A \to B$ with lali
  structure is equally to give:
   \begin{itemize}
   \item A section $u \colon \ob B \to \ob A$
   of the action of $f$ on objects; and
   \item For each $a \in A$ and $b \in B$, a section $\gamma_{a} \colon
     B(fa,b) \to A(a,ub)$ of the action of $f$ on morphisms; such that
   \item $\gamma_{a}(\beta \cdot
   f\alpha) = \gamma_{a'}(\beta) \cdot \alpha$ for all $\alpha \colon a \to
   a'$ in $A$ and $\beta \colon fa' \to b$ in $B$; and
   \item $\gamma_{ub}(1_b) = 1_{ub}$ for all $b \in B$.
 \end{itemize}
 Indeed, if $f$ is part of a lali $f \dashv p$ with unit $\eta$, then
 we obtain these data by taking $u$ to be the action of $p$ on
 objects, and taking $\gamma_a(\beta) = p(\beta) \cdot \eta_a$.
 Conversely, given these data, we define a section $p$ of $f$ on
 objects by $p(b) = u(b)$ and on morphisms by $p(\beta \colon b \to
 b') = \gamma_{ub}(\beta) \colon ub \to ub'$, and define a unit $\eta
 \colon 1 \Rightarrow pf$ with components $\eta_a = \gamma_a(1_{fa})$.
 This proves the claim.

 We now define a small $V \colon \dcat J \to \Sq{\Cat}$ for which
 $\Drl[\dcat J] \cong \dcat{Lali}(\Cat)$. The objects of $\dcat J$ are the
 ordinals $\cat 0$, $\cat 1$, $\cat 2$ and $\cat 3$, and its
 horizontal arrows are order-preserving maps; on these, $V$ acts as
 the identity. The vertical arrows of $\dcat J$ are freely generated
 by morphisms $k \colon \cat 0 \to \cat 1$, $\ell \colon \cat 1 \to
 \cat 2$ and $m \colon \cat 2 \to \cat 3$, which are sent by $V$ to
 the appropriate initial segment inclusions between ordinals. Its
 squares are freely generated by the following four:
\[
\cd{
\cat 0 \ar[d]_k \ar[r]^{!} \ar@{}[dr]|{\text{(a)}} & \cat 1 \ar[d]^\ell \\
\cat 1 \ar[r]_{\delta_0} & \cat 2} \qquad
\cd{
\cat 1 \ar[d]_\ell \ar[r]^{\delta_0} \ar@{}[dr]|{\text{(b)}} & \cat 2 \ar[d]^m \\
\cat 2 \ar[r]_{\delta_0} & \cat 3} \qquad
\cd{
\cat 1 \ar[d]_\ell \ar[r]^{\delta_1} \ar@{}[dr]|{\text{(c)}} & \cat 2 \ar[d]^m \\
\cat 2 \ar[r]_{\delta_1} & \cat 3}\qquad
\cd{
{\cat{0}} \ar[d]_k \ar[r]^{\id} \ar@{}[ddr]|{\text{(d)}} & {\cat 0} \ar[dd]^{k} \\
{\cat{1}} \ar[d]_\ell \\
{\cat{2}} \ar[r]_{!} & {\cat 1}\rlap{ ,}
}
\]
where (following standard simplicial notation) $\delta_0$ and
$\delta_1$ denote the order-preserving injections omitting $0$ and $1$
respectively. We claim that $\dcat J \to \Sq{\Cat}$ cofibrantly
generates the \awfs for lalis; in other words, that $\Drl[\dcat J] \cong
\dcat{Lali}(\Cat)$ over $\Sq{\Cat}$. Indeed, given a functor $f
\colon A \to B$, we see that:
\begin{itemize}
\item To equip $f$ with a $Vk$-lifting operation is to give a section
  $u$ of its action on objects;\vskip0.25\baselineskip
\item To equip $f$ with a $V\ell$-lifting operation is to give, for
  every $a \in A$ and $\beta \colon fa \to b$ in $B$, a map
  $\gamma_a(\beta) \colon a \to \bar b$ in $A$ over $\beta$; to ask for
  the compatibility (a) with the $Vk$-lifting operation is to ask
  that, in fact, $\bar b = ub$.\vskip0.25\baselineskip
\item To equip $f$ with a $Vm$-lifting operation is to give,
  for every $\alpha \colon a \to a'$ in $A$ and $\beta \colon fa' \to
  b$, a map $\psi_{\alpha}(\beta) \colon a' \to \bar b$ in $A$ over $\beta$. The
  compatibility (b) now forces $\psi_{\alpha}(\beta) =
  \gamma_{a'}(\beta)$, while that in (c) forces $\psi_{\alpha}(\beta)
  \cdot \alpha = \gamma_a(\beta \cdot f\alpha)$; so $\psi$ is
  determined by $\gamma$, and we have $\gamma_a(\beta \cdot f\alpha) =
  \gamma_{a'}(\beta) \cdot \alpha$.
\item Finally, the compatibility (d) forces $\gamma_{ub}(1_{b}) =
  1_{ub}$.
\end{itemize}
This verifies that vertical arrows of $\Drl[\dcat J]$ are lalis in
$\Cat$; similar arguments show that squares of $\Drl[\dcat J]$ are
ones preserving the lali structure, and that composition is
composition of lalis, as required.
  \end{proof}
 \subsection{Canonical class-cofibrant generation}
 \label{sec:canon-class-cofibr}
As further evidence for the adequacy of the notion of
double-categorical cofibrant generation, we have the following result,
which tells us that any \awfs is class-cofibrantly generated by its
(typically large) double category of coalgebras.
\begin{Prop}
  \label{prop:3}
  Any \awfs is class-cofibrantly generated by $\DCoalg{L} \to \Sq{\C}$
  and class-fibrantly generated by $\DAlg{R} \to \Sq{\C}$. 
\end{Prop}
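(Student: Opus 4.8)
The plan is to prove only the first assertion, that $\DAlg{R}\cong\Drl[\DCoalg{L}]$ over $\Sq{\C}$; the second, that $\DCoalg{L}\cong\Dll[\DAlg{R}]$, then follows by the evident dualisation. First I would produce the comparison double functor. The canonical lifting operation~\eqref{eq:4} is, as observed in Section~\ref{sec:double-categ-lift}, a $(\DCoalg{L},\DAlg{R})$-lifting operation; so the adjunction~\eqref{eq:34} of Proposition~\ref{prop:1} classifies it as a double functor $\bar\Phi\colon\DAlg{R}\to\Drl[\DCoalg{L}]$ over $\Sq{\C}$, whose component on vertical arrows and squares sends an $\mathsf R$-algebra $\alg g$ to its underlying map equipped with the lifting operation $\Phi_{\thg\alg g}$. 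Since this double functor is the identity on objects and horizontal arrows, it will be an isomorphism of double categories as soon as its $\A_1$-component $\Alg{R}\to(\Drl[\DCoalg{L}])_1$ is an isomorphism of categories; and for this it suffices, since by Lemma~\ref{lem:2} the functor $\bar\Phi\colon\Alg{R}\to\Rl[\Coalg{L}]$ is already fully faithful, to show that its injective action on objects is in fact a bijection onto the vertical arrows of $\Drl[\DCoalg{L}]$.

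The heart of the argument is thus to compare three conditions on a coalgebra lifting operation $(g,\phi_{\thg g})$ on a map $g$ of $\C$: (A) that it underlie an $\mathsf R$-algebra, i.e.\ lie in the image of $\bar\Phi$; (B) that it satisfy~\eqref{eq:37}; and (C) that it respect vertical composition of $\mathsf L$-coalgebras, i.e.\ be a vertical arrow of $\Drl[\DCoalg{L}]$. Lemma~\ref{lem:2} gives (A)$\Leftrightarrow$(B), while the fact that~\eqref{eq:4} is a $(\DCoalg{L},\DAlg{R})$-lifting operation gives (A)$\Rightarrow$(C).

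I would close the loop by proving (C)$\Rightarrow$(B). Given a composition-respecting $\phi_{\thg g}$ and a square $(u,v)\colon\l f\to g$, I apply the compatibility~\eqref{eq:31} to the composite coalgebra $\fl{\r f}\cdot\fl f$ evaluated at $(u,v\cdot\m f)$; since $\m f\cdot\l{\r f}=1$ (a component of the monad unit law), the inner lift simplifies and the right-hand side becomes exactly $\phi_{\fl{\r f},g}(\phi_{\fl f,g}(u,v),v\cdot\m f)$, the right-hand side of~\eqref{eq:37}. For the corresponding left-hand side, I invoke naturality~\eqref{eq:4} along the coalgebra morphism $(1,\m f)\colon\fl{\r f}\cdot\fl f\to\fl f$---which is a square of $\DCoalg{L}$ by the left diagram of~\eqref{eq:13}---to rewrite $\phi_{\fl f,g}(u,v)\cdot\m f$ as $\phi_{\fl{\r f}\cdot\fl f,g}(u,v\cdot\m f)$. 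Equating the two expressions establishes~\eqref{eq:37}, so conditions (A), (B), (C) coincide and $\bar\Phi$ is a bijection on objects; being fully faithful, it is then an isomorphism $\DAlg{R}\cong\Drl[\DCoalg{L}]$ over $\Sq{\C}$.

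The main subtlety to watch is purely bookkeeping: keeping the orientations in the compatibility law~\eqref{eq:31} and in the naturality law~\eqref{eq:4} straight, and checking that the relevant squares genuinely commute---which is where the identity $\m f\cdot\l{\r f}=1$ and the coalgebra-squareness of $(1,\m f)$ enter. There is no real obstacle beyond this, since the substance has already been isolated in Lemma~\ref{lem:2} and in the remark identifying~\eqref{eq:4} as a double-categorical lifting operation; the whole of Proposition~\ref{prop:3} is then a matter of assembling these with the representing property~\eqref{eq:34}.
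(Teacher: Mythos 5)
Your proposal is correct and takes essentially the same route as the paper: the comparison double functor comes from Proposition~\ref{prop:1}, injectivity and full fidelity come from Lemma~\ref{lem:2}, and surjectivity onto the vertical arrows comes from showing that the vertical-composition condition~\eqref{eq:31} forces~\eqref{eq:37}, using the left square of~\eqref{eq:13}. The only difference is presentational: you spell out, via naturality along $(1,\m f)$ and the unit law $\m f \cdot \l{\r f} = 1$, the implication \eqref{eq:31}$\,\Rightarrow\,$\eqref{eq:37} that the paper compresses into a single clause.
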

\begin{proof}
  By duality, we need prove only the first statement. As above,
  \eqref{eq:4} is an $(\DCoalg{L}, \DAlg{R})$-lifting operation, and
  so by Proposition~\ref{prop:1} corresponds to a concrete double
  functor $\Lambda \colon \DAlg{R} \to \Drl[\DCoalg{L}]$ over $\C$. Of
  course, $\Lambda$ is the identity on objects and horizontal arrows;
  while its component $\Lambda_1 \colon \Alg{R} \to
  (\Drl[\DCoalg{L}])_1$ on vertical arrows and squares postcomposes
  with the inclusion $(\Drl[\DCoalg{L}])_1 \hookrightarrow
  \Rl[\Coalg{L}]$ to yield the $\bar \Phi$ of Lemma~\ref{lem:2}. As
  $\bar \Phi$ is injective on objects and fully faithful, so too is
  $\Lambda_1$; while since the left square of~\eqref{eq:13} is one of
  $\DCoalg{L}$, the condition~\eqref{eq:31} defining the objects
  in~$(\Drl[\DCoalg{L}])_1$ implies that in~\eqref{eq:37}, so that
  $\Lambda_1$ is also surjective on objects, and thus an isomorphism.
\end{proof}

The fact of an isomorphism $\DCoalg{L} \cong \Dll[\DAlg{R}]$ is often
a useful tool for calculation. Many \awfs that arise in practice are
cofibrantly generated by double categories in a natural way; as such,
we have a concrete understanding of the $\mathsf R$-algebras. The
above isomorphism offers one technique for obtaining the corresponding
$\mathsf L$-coalgebras without explicit calculation of the
values of $L$ or $R$.

\subsection{Freeness of cofibrantly generated awfs}

Above, we have defined cofibrant generation of an \awfs in terms of a
universal property of its double category of algebras. We conclude
this section by showing that this implies another universal property:
that its double category of coalgebras is \emph{freely generated} by
the given $\dcat J$ with respect to left adjoint functors between
concrete double categories. In the case of cofibrant generation by a
mere category, this was shown
in~\cite[Theorem~6.22]{Riehl2011Algebraic}; our result generalises
this, and simplifies the proof.

The key step will be to extend the adjunction~\eqref{eq:34} to account
for change of base. To this end, we consider the $2$-category $\DBL /
\Sq{\thg\,_{\mathrm{ladj}}}$, whose objects are double functors $\dcat
J \to \Sq{\C}$, whose $1$-cells are squares as on the left of
\begin{equation}
  \cd[@R+0.4em@C+1em@-0.5em]{
    \dcat J \ar[d]_{U} \ar[r]^{\bar F} & \dcat J'
    \ar[d]^{U'} \\
    \Sq{\C} \ar[r]_-{\Sq{F}} & \Sq{\D}
  } \qquad \qquad \qquad
  \cd[@R+0.4em@C+2em]{
    \dcat J \ar[d]_{U} \ar@/^0.8em/[r]^(0.475){\bar F} \dtwocell{r}{\bar \alpha}
    \ar@/_0.8em/[r]_(0.525){\bar{F}'} & \dcat J'
    \ar[d]^{U' }\\
    \Sq{\C} \ar@/^0.8em/[r]^(0.475){\Sq{F}}
    \dtwocell[0.4]{r}{\Sq{\alpha}} \ar@/_0.8em/[r]_(0.525){\Sq{F'}}&
    \Sq{\D}
  }
\label{eq:44}
\end{equation}
together with a chosen right adjoint $G$ for $F$, and whose $2$-cells
are diagrams as on the right above. We define the $2$-category $\DBL /
\Sq{\thg\,_{\mathrm{radj}}}$ dually.

\begin{Prop}
  \label{prop:16}
  The adjunction~\eqref{eq:34} extends to a $2$-adjunction
  \begin{equation} \cd[@C+1em]{(\DBL /
      \Sq{\thg\,_{\mathrm{ladj}}})^{\co\op} \ar@{}[r]|-{\bot}
      \ar@<-4.5pt>[r]_-{{\Drl[(\thg)]}} & \DBL /
      \Sq{\thg\,_{\mathrm{radj}}}\rlap{ ,}
      \ar@<-4.5pt>[l]_-{{\Dll[(\thg)]}}}\label{eq:43}
  \end{equation}
  with respect to which the canonical isomorphisms $\Lambda \colon
  \DAlg{R} \to \Drl[\DCoalg{L}]$ and $\bar \Lambda \colon \DCoalg{L}
  \to \Dll[\DAlg{R}]$ of Proposition~\ref{prop:3} are $2$-natural.
\end{Prop}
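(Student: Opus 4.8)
The plan is to promote the fibrewise adjunction~\eqref{eq:34}, valid over each fixed base, to a $2$-adjunction that additionally tracks change of base along adjoint pairs. There are four things to do: define $\Drl[(\thg)]$ and $\Dll[(\thg)]$ on the change-of-base $1$- and $2$-cells; verify $2$-functoriality; build the unit and counit and check the triangle identities; and finally establish the asserted $2$-naturality of $\Lambda$ and $\bar\Lambda$. The one genuinely new ingredient is the transport of a lifting operation across an adjunction, which in the algebraic case is exactly the content of~\eqref{eq:46}; throughout I would make systematic use of the mate calculus of Section~\ref{sec:adjunctions}.

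I would begin by defining the action of $\Drl[(\thg)]$ on higher cells. A $1$-cell of $\DBL/\Sq{\thg\,_{\mathrm{ladj}}}$ consists of a double functor $\bar F \colon \dcat J \to \dcat J'$ lying over $\Sq F$, with $F$ equipped with a chosen right adjoint $G$; since $\Drl[(\thg)]$ is contravariant on $1$-cells by the $\co\op$, it should send this to a double functor $\Drl[\dcat J'] \to \Drl[\dcat J]$ over $\Sq G$. I would construct the latter by transport: given $(g,\phi_{\thg g}) \in \Drl[\dcat J']$ over $g$, a square $(u,v) \colon Uj \to Gg$ transposes under $F \dashv G$ to $(\bar u,\bar v) \colon FUj = U'\bar F j \to g$ (using $\Sq F \cdot U = U' \cdot \bar F$), to which $\phi_{\bar F j,g}$ applies, and the resulting filler transposes back to a filler for $(u,v)$. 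That this assignment is a genuine $(\dcat J,Gg)$-lifting operation---natural in $\dcat J$ and respecting vertical composition~\eqref{eq:31}---follows from $\bar F$ being a double functor and from the functoriality of transposition, so we obtain the required concrete double functor, which together with the chosen left adjoint $F$ is a $1$-cell of $\DBL/\Sq{\thg\,_{\mathrm{radj}}}$. On $2$-cells I would send $(\bar\alpha,\alpha)$, with $\alpha \colon F \Rightarrow F'$, to the $2$-cell whose base component is the mate of $\alpha$; functoriality of $\Drl[(\thg)]$ then reduces to the functoriality of transposition and the compatibility of the mate correspondence with composition. The definition of $\Dll[(\thg)]$ is dual.

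Next I would assemble the $2$-adjunction. Over each fixed base the unit and counit are those of~\eqref{eq:34}; the point is to check that, as the base varies, these are $2$-natural with respect to the transport above. This is so because both the transport recipe and the components of~\eqref{eq:34} are determined by the same universal property---representability of $\dcat{Lift}$ from Proposition~\ref{prop:1}---and the representing isomorphisms are natural in transposition. The triangle identities hold since on each fibre they are those of~\eqref{eq:34}, the change-of-base contributions being mediated by the mate calculus. I expect this paragraph to be the main obstacle: the conceptual steps are easy, but verifying $2$-naturality of the unit and counit across change of base, and that the triangle identities survive the $(\thg)^{\co\op}$ variance, requires careful orchestration of mates and transposes.

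Finally, for the $2$-naturality of $\Lambda$ and $\bar\Lambda$, I would realise them as the components of $2$-natural isomorphisms relating the semantics $2$-functors to $\Drl[(\thg)]$ and $\Dll[(\thg)]$, the common domain being the $2$-category of adjunctions of \awfs: an adjunction of \awfs yields simultaneously a $1$-cell into $\DBL/\Sq{\thg\,_{\mathrm{radj}}}$ through its algebra semantics and chosen left adjoint, and a $1$-cell into $\DBL/\Sq{\thg\,_{\mathrm{ladj}}}$ through its coalgebra semantics and chosen right adjoint, to which $\Drl[(\thg)]$ is then applied. Componentwise the isomorphism is Proposition~\ref{prop:3}; its $2$-naturality on $1$-cells is precisely~\eqref{eq:46}, which says that the lifting operation of $\alg g$ transported along $G$ agrees with the canonical lifting operation of $G\alg g$, while $2$-naturality on $2$-cells follows from the mate calculus together with the injectivity recorded in Lemma~\ref{lem:2}. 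The statement for $\bar\Lambda$ is obtained by the evident dualisation.
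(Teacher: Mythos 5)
Your construction of the extended $2$-functors coincides with the paper's: on change-of-base $1$-cells you transport lifting operations across $F \dashv G$ by transposing squares $(u,v) \colon Uj \to Gg$ to $(\bar u,\bar v) \colon U'\bar F j \to g$ and transposing the fillers back, and on $2$-cells you take mates and lift them into the fibre (one small point: concreteness gives \emph{uniqueness} of the lift $\bar\beta$ of the mate $\beta \colon G' \Rightarrow G$, but its \emph{existence} still requires the short calculation the paper alludes to, which you should not elide). Your treatment of the $2$-naturality of $\Lambda$ and $\bar\Lambda$---componentwise Proposition~\ref{prop:3}, $1$-cell naturality being exactly~\eqref{eq:46}, $2$-cell naturality automatic from concreteness/faithfulness---also matches the paper. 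Where you genuinely diverge is the adjointness verification. You propose to build units and counits and check triangle identities and their $2$-naturality, and you flag this as ``the main obstacle''. The paper never constructs a unit or counit: it simply observes that a $1$-cell $\dcat J \to \Dll[\dcat K]$ in $\DBL/\Sq{\thg\,_{\mathrm{ladj}}}$ \emph{is} the datum of an adjunction $F \dashv G$ together with an $(F\dcat J, \dcat K)$-lifting operation, while a $1$-cell $\dcat K \to \Drl[\dcat J]$ in $\DBL/\Sq{\thg\,_{\mathrm{radj}}}$ \emph{is} the same adjunction together with a $(\dcat J, G\dcat K)$-lifting operation; the transposition correspondence you already established for the functor definition identifies these two kinds of data, and likewise for $2$-cells, so the two hom-categories are isomorphic outright and the adjunction \eqref{eq:43} follows with no triangle identities and no variance bookkeeping. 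Your route can in fact be completed---the unit and counit components live over identity adjunctions, so the triangle identities reduce to the fibrewise ones of~\eqref{eq:34}, and $2$-naturality of the units across change of base is the same computation as naturality of the hom-bijection---but the paper's hom-level formulation makes the step you single out as the hard one evaporate, which is what buys it a half-page proof.
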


\begin{proof}
  The $2$-functor $\Drl[(\thg)]$ is defined as before on objects, and
  as before on $1$-cells~\eqref{eq:44} whose bottom edge is an
  identity. To complete the definition on $1$-cells, it thus suffices
  to consider squares whose \emph{top} edge is an identity, as on the
  left in
  \begin{equation}\label{eq:48}
    \cd[@R+0.4em@C+1em@-0.5em]{
      \dcat J \ar[d]_{U} \ar[r]^{1} & \dcat J
      \ar[d]^{\Sq{F} \cdot U} \\
      \Sq{\C} \ar[r]^{\Sq{F}} & \Sq{\D}
    } \qquad \qquad \qquad
    \cd[@R+0.4em@C+1em@-0.5em]{
      \Drl[(F\dcat J)] \ar[d] \ar[r]^{} \pushoutcorner & \Drl[\dcat J]
      \ar[d] \\
      \Sq{\D} \ar[r]^{\Sq{G}} & \Sq{\C}\rlap{ .}
    }
  \end{equation}
  Let $F\dcat J$ denote the double category $\dcat J \to \Sq{\D}$ over
  $\Sq{\D}$ down the right edge of this square, and let $G$ be the
  chosen right adjoint for $F$. We claim that there is in fact a
  \emph{pullback} square as displayed right above: which we will take
  to be the value of $\Drl[(\thg)]$ at the left-hand square. For this,
  we must show that for each $g \colon C \to D$ of $\D$, the $(F \dcat
  J, g)$-lifting operations $\phi_{\thg, g}$ correspond functorially
  with $(\dcat J, Gg)$-lifting operations $\bar \phi_{\thg, Gg}$. But
  such lifting operations are given by natural transformations
  \begin{gather*}
    \phi_{\thg, g} \colon \C(\mathrm{cod}\ FU\thg, \mathrm{dom}\ g)
    \Rightarrow \C^\atwo(FU\thg, g) \colon \J^\op \to \Set\\
    \bar \phi_{\thg, {Gg}} \colon \C(\mathrm{cod}\ U\thg,
    \mathrm{dom}\ Gg) \Rightarrow \C^\atwo(U\thg, Gg) \colon \J^\op
    \to \Set
  \end{gather*}
  satisfying axioms; so as $F \dashv G$, these are in a clear
  bijective correspondence, under which the maps of lifting operations
  also correspond as required.

  Finally, we must define $\Drl[(\thg)]$ on $2$-cells. Given a diagram
  as on the right of~\eqref{eq:44}, the $2$-cell $\alpha \colon F
  \Rightarrow F'$ therein transposes under adjunction to one $\beta
  \colon G' \Rightarrow G$. As the forgetful double functors
  $\Drl[(\dcat J')] \to \Sq{\D}$ and $\Drl[\dcat J] \to \Sq{\C}$ are
  concrete, there is a unique possible $2$-cell $\bar \beta \colon
  \Drl[(\bar F')] \Rightarrow \Drl[\bar F]$ lifting $\beta$; a short
  calculation shows that this lifting indeed exists, so that we may
  take $\Drl[(\alpha, \bar \alpha)] = (\beta, \bar \beta)$. This
  completes the definition of $\Drl[(\thg)]$; that of $\Dll[(\thg)]$
  is dual.

  To verify that the extended $\Drl[(\thg)]$ and $\Dll[(\thg)]$ are
  $2$-adjoint, we observe that for $\dcat J$ and $\dcat K$ over
  $\Sq{\C}$ and $\Sq{\D}$, a map $\dcat J \to \Dll[\dcat K]$ in $\DBL
  / \Sq{\thg\,_{\mathrm{ladj}}}$ is an adjunction $F \dashv G \colon
  \D \to \C$ together with a $(F\dcat J, \dcat K)$-lifting operation,
  while a map $\dcat K \to \Drl[\dcat J]$ in $\DBL /
  \Sq{\thg\,_{\mathrm{radj}}}$ is an adjunction $F \dashv G$ together
  with a $(\dcat J, G\dcat K)$-lifting operation; by the above
  argument, these data are in bijective correspondence. We argue
  similarly for the bijection on $2$-cells. Finally, the naturality of
  the isomorphisms $\Lambda$ and $\bar \Lambda$ is
  exactly~\eqref{eq:46}; their $2$-naturality now follows by
  concreteness.
\end{proof}

Using the result, we now give our promised generalisation
of~\cite[Theorem~6.22]{Riehl2011Algebraic}.
\begin{Prop}
  \label{prop:15}
  If $(\mathsf L, \mathsf R)$ is (class-)cofibrantly generated by $U
  \colon \dcat J \to \Sq{\C}$, then it provides a left $2$-adjoint at
  $\dcat J$ for $\DCoalg{(\thg)} \colon \Ladj \to \DBL/\Sq{\thg
    \,_\mathrm{ladj}}$.
\end{Prop}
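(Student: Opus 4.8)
The statement asserts that the $2$-functor $\Ladj \to \CAT$ sending $(\D, \mathsf L', \mathsf R')$ to $(\DBL/\Sq{\thg\,_\mathrm{ladj}})(\dcat J, \DCoalg{L'})$ is representable, with representing object $(\C, \mathsf L, \mathsf R)$; equivalently, there is a $2$-natural isomorphism
\[ (\DBL/\Sq{\thg\,_\mathrm{ladj}})(\dcat J, \DCoalg{L'}) \cong \Ladj((\C, \mathsf L, \mathsf R), (\D, \mathsf L', \mathsf R')) \]
whose universal element is a comparison $1$-cell $\dcat J \to \DCoalg{L}$. The plan is to build this isomorphism as a composite of $2$-natural isomorphisms, each supplied by a result already established.

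Explicitly, I would run through the chain
\begin{align*}
  (\DBL/\Sq{\thg\,_\mathrm{ladj}})(\dcat J, \DCoalg{L'})
  &\cong (\DBL/\Sq{\thg\,_\mathrm{ladj}})(\dcat J, \Dll[\DAlg{R'}]) \\
  &\cong (\DBL/\Sq{\thg\,_\mathrm{radj}})(\DAlg{R'}, \Drl[\dcat J]) \\
  &\cong (\DBL/\Sq{\thg\,_\mathrm{radj}})(\DAlg{R'}, \DAlg{R}) \\
  &\cong \Radj((\D, \mathsf L', \mathsf R'), (\C, \mathsf L, \mathsf R)) \\
  &\cong \Ladj((\C, \mathsf L, \mathsf R), (\D, \mathsf L', \mathsf R'))\rlap{ .}
\end{align*}
The first step applies the canonical isomorphism $\bar \Lambda \colon \DCoalg{L'} \cong \Dll[\DAlg{R'}]$ of Proposition~\ref{prop:3}, which Proposition~\ref{prop:16} shows to be $2$-natural. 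The second step is the hom-isomorphism of the $2$-adjunction of Proposition~\ref{prop:16}, in the concrete form established there---a map $\dcat J \to \Dll[\dcat K]$ corresponds to a map $\dcat K \to \Drl[\dcat J]$---taken with $\dcat K = \DAlg{R'}$. The third step uses the cofibrant-generation hypothesis $\Drl[\dcat J] \cong \DAlg{R}$ over $\Sq{\C}$. The fourth step is Proposition~\ref{prop:ff2semantics}: a $1$-cell $\DAlg{R'} \to \DAlg{R}$ in $\DBL/\Sq{\thg\,_\mathrm{radj}}$ is a double functor over some $\Sq{G}$ together with a chosen left adjoint $F$ for $G$, and the double functor corresponds under $2$-full-faithfulness to a unique lax morphism $(G, \alpha)$, so the whole datum is exactly a $1$-cell of $\Radj$. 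The last step is the isomorphism $\Radj^{\co\op} \cong \Ladj$ of Section~\ref{sec:adjunctions}. Reading off the image of $\id_{(\C, \mathsf L, \mathsf R)}$ identifies the universal comparison $1$-cell as the transpose $\dcat J \to \Dll[\Drl[\dcat J]] \cong \Dll[\DAlg{R}] \cong \DCoalg{L}$ of the cofibrant-generation isomorphism under the $2$-adjunction.

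The step I expect to need most care is tracking variance. The $2$-adjunction of Proposition~\ref{prop:16} lives between $(\DBL/\Sq{\thg\,_\mathrm{ladj}})^{\co\op}$ and $\DBL/\Sq{\thg\,_\mathrm{radj}}$, so its hom-isomorphism (step two) reverses $2$-cells; the isomorphism $\Radj^{\co\op} \cong \Ladj$ (step five) reverses them again. I would verify that these two reversals compose to the identity, so that the assembled isomorphism is a genuine covariant $2$-natural isomorphism of $2$-functors $\Ladj \to \CAT$, and not one twisted by $(\thg)^{\co}$; the objects line up correctly throughout, since every hom-category in the chain consists of maps out of $\dcat J$, respectively out of $(\C, \mathsf L, \mathsf R)$.

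The remaining points are routine. Each constituent isomorphism is $2$-natural in $(\D, \mathsf L', \mathsf R')$---by the $2$-naturality clauses of Propositions~\ref{prop:3} and~\ref{prop:16}, by Proposition~\ref{prop:ff2semantics}, and by functoriality of the hom-$2$-functors---so their composite is too. Representability of the $2$-functor in question by $(\C, \mathsf L, \mathsf R)$ is then precisely the assertion that $\DCoalg{(\thg)}$ admits a left $2$-adjoint at $\dcat J$ with this value, as required; the class-cofibrantly generated case is identical, reading ``large $\dcat J$'' throughout.
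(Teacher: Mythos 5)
Your proposal is correct and is essentially the paper's own proof: the paper establishes the same $2$-natural isomorphism by composing exactly the same five ingredients---the duality $\Ladj \cong \Radj^{\co\op}$, $2$-full-fidelity of $\DAlg{(\thg)}$, the assumed isomorphism $\Drl[\dcat J] \cong \DAlg{R}$, the $2$-adjunction~\eqref{eq:43} of Proposition~\ref{prop:16}, and the isomorphism of Proposition~\ref{prop:3}---merely writing the chain in the reverse direction, starting from $\Ladj((\mathsf L, \mathsf R), (\mathsf L', \mathsf R'))$. Your variance check is handled in the paper by the explicit $(\thg)^\op$ markers on the hom-categories, which are introduced by the $\co\op$ duality and cancelled by the contravariant $2$-adjunction, just as you anticipated.
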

\begin{proof}
  We have natural isomorphisms
  \begin{align*}
    \Ladj((\mathsf L, \mathsf R),\, (\mathsf L', \mathsf R')) &
\cong \Radj((\mathsf L', \mathsf R'),\, (\mathsf L, \mathsf R))^\op \\ & 
    \cong \DBL/\Sq{\thg
    \,_\mathrm{radj}}(\DAlg{R'}, \DAlg{R})^\op \\ & \cong \DBL/\Sq{\thg
    \,_\mathrm{radj}}(\DAlg{R'},
    \Drl[\dcat J])^\op\\
    & \cong \DBL/\Sq{\thg
    \,_\mathrm{ladj}}(\dcat J, \Dll[\DAlg{R'}]) \\ & \cong \DBL/\Sq{\thg
    \,_\mathrm{ladj}}(\dcat J, \DCoalg{L'})\rlap{ ,}
  \end{align*}
  by, respectively, the isomorphism $\Ladj \cong \Radj^{\co\op}$; full
  fidelity of $\DAlg{(\thg)}$; the assumed isomorphism $\Drl[\dcat J]
  \cong \DAlg{R}$ over $\Sq{\C}$; adjointness~\eqref{eq:43}; and the
  natural isomorphisms of Proposition~\ref{prop:3}. 
\end{proof}

\section{A characterisation of accessible \awfs}\label{sec:char-access-awfs}
We are now ready to give our second main result,
Theorem~\ref{thm:accessible}, which gives a characterisation of the
accessible \awfs on a locally presentable category. We will show that,
for a locally presentable $\C$, any small double category over
$\Sq{\C}$ cofibrantly generates an accessible \awfs, and, moreover,
that every accessible \awfs on $\C$ arises in this way.

\subsection{Existence of cofibrantly generated AWFS}
\label{sec:exist-cofibr-gener}
We begin by extending Proposition~\ref{prop:6} from cofibrant
generation by categories to that by double categories.

\begin{Prop}\label{prop:4}
  If $\C$ is locally presentable, then the \awfs cofibrantly generated
  by any small double category $U \colon \dcat J \to \Sq{\C}$ exists
  and is accessible.
\end{Prop}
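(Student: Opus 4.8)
The plan is to deduce everything from the Beck theorem, Theorem~\ref{thm:recognition}. Since to say that $(\mathsf L, \mathsf R)$ is cofibrantly generated by $\dcat J$ is exactly to say that $\DAlg{R} \cong \Drl[\dcat J]$ over $\Sq{\C}$, it suffices to prove that the concrete double category $V \colon \Drl[\dcat J] \to \Sq{\C}$ lies in the essential image of $\DAlg{(\thg)}$; any \awfs witnessing this membership is then cofibrantly generated by $\dcat J$. By Theorem~\ref{thm:recognition} this reduces to two checks: that $\Drl[\dcat J]$ is right-connected, and that its vertical component $V_1 \colon (\Drl[\dcat J])_1 \to \C^\atwo$ is strictly monadic.

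Right-connectedness I would obtain by inheritance. By Proposition~\ref{prop:1}, $\Drl[\dcat J]$ is the sub-double category of $\Drl[\J_1]$ with the same objects and horizontal arrows, those vertical arrows whose lifting operations respect the vertical composition~\eqref{eq:31} of $\dcat J$, and all cells between them; and $\Drl[\J_1]$ is right-connected exactly as in the proof of Proposition~\ref{prop:6}. Since the unique lifting operation carried by an identity map trivially respects composition, each identity lies in $\Drl[\dcat J]$, and each square~\eqref{eq:16}---being a cell of $\Drl[\J_1]$ between vertical arrows that already lie in $\Drl[\dcat J]$---is again a cell of $\Drl[\dcat J]$.

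The substance is the strict monadicity of $V_1$, for which I would verify the two hypotheses of the precise monadicity theorem. That $V_1$ strictly creates coequalisers of $V_1$-absolute coequaliser pairs follows from the corresponding fact for $\Rl[\J_1] \to \C^\atwo$ established in Proposition~\ref{prop:6}, once one observes that the full subcategory $(\Drl[\dcat J])_1$ is closed in $\Rl[\J_1]$ under such coequalisers: the relations~\eqref{eq:31} are equations between operations determined by the absolute coequaliser, and so pass to it. The real work is to produce a left adjoint to $V_1$. Here I would follow the strategy of Proposition~\ref{prop:6}: first build the accessible pointed endofunctor $(P, \rho)$ on $\C^\atwo$ whose algebras are the maps equipped with a lifting against each vertical arrow of $\dcat J$, so that algebras for $(P, \rho)$ are exactly the objects of $\Rl[\J_1]$; then refine it to an accessible pointed endofunctor $(\bar P, \bar \rho)$ whose algebras additionally satisfy~\eqref{eq:31}. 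Free $(\bar P, \bar \rho)$-algebras then exist by~\cite[Theorem~22.3]{Kelly1980A-unified}, furnishing the left adjoint $F_1$ and hence strict monadicity.

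The main obstacle is exactly this last refinement: imposing the relations~\eqref{eq:31} on $(P, \rho)$ while remaining inside the accessible world, so that its algebras are precisely the composition-respecting lifting operations and it still preserves $\kappa$-filtered colimits for some regular $\kappa$. Smallness of $\dcat J$ is what makes this possible, bounding by a set both the generating liftings and the composable pairs that index the relations; choosing $\kappa$ so large that every domain and codomain of a $Uj$ is $\kappa$-presentable, $(\bar P, \bar \rho)$ preserves $\kappa$-filtered colimits just as in Proposition~\ref{prop:6}. Accessibility of the \awfs then drops out: $V_1$ creates $\kappa$-filtered colimits, so the induced monad $\mathsf R = V_1 F_1$ preserves them and is accessible, and---as recorded before Proposition~\ref{prop:2}---accessibility of $\mathsf R$ forces that of $\mathsf L$.
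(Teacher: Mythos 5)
Your reduction to Theorem~\ref{thm:recognition}, the inheritance of right-connectedness, and the creation of $V_1$-absolute coequalisers are all fine, and match the shape of the paper's argument. The problem is the step you yourself flag as the ``main obstacle'': it is not an obstacle to be negotiated but an impossibility. There is in general \emph{no} pointed endofunctor $(\bar P, \bar \rho)$ on $\C^\atwo$---accessible or otherwise, however large $\kappa$ is taken---whose algebras are the maps equipped with lifting operations satisfying~\eqref{eq:31}. The reason is that the class of objects admitting algebra structure for a pointed endofunctor is always closed under retracts: if $x \colon \bar P X \to X$ satisfies $x \cdot \bar\rho_X = 1$ and $i \colon Y \to X$, $r \colon X \to Y$ exhibit $Y$ as a retract of $X$, then $r \cdot x \cdot \bar P i$ is an algebra structure on $Y$. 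But the objects of $(\Drl[\dcat J])_1$ need not be retract-closed: for the small double category $\dcat J$ of Proposition~\ref{prop:11} one has $(\Drl[\dcat J])_1 \cong \cat{Lali}(\Cat)$, and the proof of Proposition~\ref{prop:10} exhibits a lali admitting a retract in $\Cat^\atwo$ that carries no lali structure. Put differently, your proof would show that the monad of \emph{any} \awfs cofibrantly generated by a small double category is algebraically-free on a pointed endofunctor, and Propositions~\ref{prop:10} and~\ref{prop:11} together say exactly that this fails for lalis. The equations~\eqref{eq:31} are equations between \emph{derived} operations, and imposing such equations is precisely what pointed-endofunctor algebras cannot express; this is the very phenomenon that makes double-categorical generation strictly stronger than generation by a category.

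The paper instead imposes~\eqref{eq:31} at the level of \emph{monads}. Writing $\J_2 = \J_1 \times_{\J_0} \J_1$ for the small category of vertically composable pairs, it applies Proposition~\ref{prop:6} to both $\J_1$ and $\J_2$, obtaining accessible monads with $\Rl[\J_1] \cong \Alg{R_1}$ and $\Rl[\J_2] \cong \Alg{R_2}$ over $\C^\atwo$. The two ways of turning a $(\J_1,g)$-lifting operation into a $(\J_2,g)$-one---restricting along the composition functor $m \colon \J_2 \to \J_1$, or lifting in two stages---are functors over $\C^\atwo$, hence by full fidelity of $\cat{Mnd}(\C^\atwo)^\op \to \CAT/\C^\atwo$ correspond to monad maps $s, t \colon \mathsf R_2 \rightrightarrows \mathsf R_1$, and $(\Drl[\dcat J])_1$ is precisely the equaliser of $s^\ast, t^\ast \colon \Alg{R_1} \rightrightarrows \Alg{R_2}$. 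Now \cite[Theorem~27.1]{Kelly1980A-unified} provides a coequaliser $q \colon \mathsf R_1 \to \mathsf R$ of the pair $(s,t)$ in the category of monads on $\C^\atwo$, which is again accessible and for which $q^\ast$ is the equaliser of $s^\ast$ and $t^\ast$; hence $(\Drl[\dcat J])_1 \cong \Alg{R}$ over $\C^\atwo$, giving strict monadicity and accessibility in one stroke. So the repair of your argument is: quotient the monad $\mathsf R_1$, not the pointed endofunctor $(P,\rho)$.
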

\begin{proof}
  Write $\J_2 = \J_1 \times_{\J_0} \J_1$ for the category of vertically
  composable pairs in $\mathbb J$, and $m \colon \J_2 \to \J_1$ for
  the vertical composition. Now the triangle on the left in
  \begin{equation*}
    \cd[@-0.5em]{\J_{2} \ar[rr]^{m}  \ar[dr]_{U_1 m} & & \J_{1} \ar[dl]^{U_1} \\
      & \C^{\atwo} }
    \qquad\qquad\quad
    \cd[@-0.6em@C-0.5em]{\Drl[\J_1] \ar[rr]^{\Drl[m]} \ar[dr]_{} & & \Drl[\J_2] \ar[dl]^{} \\
      & \Sq{\C}}
  \end{equation*}
  induces a morphism $\Drl[m]$ of concrete double categories over $\C$
  as on the right, which on vertical arrows sends $(g, \phi_{\thg g})
  \colon D \to E$ to $(g, \psi_{\thg g}) \colon D \to E$, with $(\J_2,
  g)$-lifting operation $\psi_{\thg g}$ defined as on the left in
  \[
  \psi_{(j, i), g}(u,v) = \phi_{j \cdot i, g}(u,v) \qquad \qquad
  \theta_{(j, i), g}(u,v) = \phi_{j, g}(\phi_{i, g}(u, v \cdot Uj),
  v)\rlap{ .}
  \]
  We also have the $(\J_2, g)$ lifting operation $\theta_{\thg g}$ as
  on the right above, and the assignation $(g, \phi_{\thg g}) \mapsto
  (g, \theta_{\thg g})$ in fact gives the action on vertical arrows of
  a further double functor $\delta_{\mathbb J} \colon \Drl[\J_1] \to
  \Drl[\J_2]$ concrete over $\C$. It is easy to see that we now have
  an equaliser of concrete double categories over $\C$ as on the left
  in
  \begin{equation}\label{eq:equaliser}
    \cd[@-0.5em@C-0.5em]{ \Drl[\dcat J] \ar[rr] \ar[drr] & & \Drl[\J_1] \ar[d] \ar@<3pt>[rr]^{\Drl[m]} \ar@<-3pt>[rr]_{\delta_{\dcat J}} & & \Drl[\J_2] \ar@<3pt>[dll] \\
      & & \Sq{\C} } \qquad \qquad \cd{
      \Rl[\J_1] \ar@<2.5pt>[r]^{\Rl[m]} \ar@<-2.5pt>[r]_{(\delta_{\dcat J})_1} \ar[d]_{\cong} & \Rl[\J_2] \ar[d]^{\cong} \\
      \Alg{R_1} \ar@<2.5pt>[r]^{s^\ast} \ar@<-2.5pt>[r]_{t^\ast} & \Alg{R_2}\rlap{ .}}
  \end{equation}
  Since $\Drl[\J_1]$ and $\Drl[\J_2]$ are right-connected, so too will
  $\Drl[\dcat J]$ be; whence by Theorem~\ref{thm:recognition}, the
  \awfs cofibrantly generated by $\dcat J$ will exist if and only if
  $(\Drl[\dcat J])_{1} \to \C^{\atwo}$ is strictly monadic. As $\J_1$
  and $\J_2$ are small, Proposition~\ref{prop:6} ensures that there
  are \awfs $(\mathsf{L_1},\mathsf{R_1})$ and
  $(\mathsf{L_2},\mathsf{R_2})$ on $\C$ with $\Rl[\J_1] \cong
  \Alg{R_1}$ and $\Rl[\J_2] \cong \Alg{R_2}$ over $\C^\atwo$; now by
  full fidelity of the assignment $\cat{Mnd}(\C^{\atwo})^{\op} \to
  \CAT/\C^{\atwo}$, there are unique monad maps $s,t \colon \mathsf
  R_{2} \rightrightarrows \mathsf R_{1}$ rendering commutative the
  diagram above right. It follows that $(\Drl[\dcat J])_1$ is
  isomorphic over $\C^{\atwo}$ to the equaliser of the lower row.
  Since $\mathsf R_1$ and $\mathsf R_2$ are accessible, the parallel
  pair of monad morphisms $s,t$ admits
  by~\cite[Theorem~27.1]{Kelly1980A-unified} a coequaliser $q \colon
  \mathsf R_1 \to \mathsf R$ which is again accessible, and for which
  $q^\ast$ is the equaliser of $s^\ast$ and $t^\ast$. Thus
  $(\Drl[\dcat J])_1 \cong \Alg{R}$ over $\C^{\atwo}$ so that
  $(\Drl[\dcat J])_1 \to \C^{\atwo}$ is strictly monadic for an
  accessible monad, as required.
\end{proof}
We now wish to show that every accessible \awfs on a locally
presentable category arises in the manner of the preceding
proposition. The key idea is to show that any accessible \awfs has a
small dense subcategory of $\mathsf L$-coalgebras, and then deduce the
result using Proposition~\ref{prop:3} and the following lemma:

\begin{Lemma}\label{lem:3}
  Given a morphism of categories over $\C^{\atwo}$
  \begin{equation*}
    \cd[@-0.85em]{\J \ar[rr]^{F} \ar[dr]_{U} & & \K\rlap{ ,} \ar[dl]^{V} \\
      & \C^{\atwo}
    }
  \end{equation*}
  the induced $\Rl[F] \colon \Rl[\K] \to \Rl[\J]$ is an isomorphism of
  categories whenever the identity $2$-cell exhibits $V$ as the
  pointwise Kan extension $\Lan_F U$. In particular, this is the case
  whenever $F$ is dense and $V$ preserves the colimits exhibiting this
  density.
\end{Lemma}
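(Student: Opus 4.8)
The plan is to exploit the description of $\Rl[\J]$ and $\Rl[\K]$ as categories of sections. Recall from Section~\ref{sec:lifting-operations} that, for a fixed $g \in \C^\atwo$, a $(\J, g)$-lifting operation is precisely a section in $[\J^\op, \Set]$ of the natural transformation $\psi_{\thg g}$ of~\eqref{eq:35}, whose $j$-component sends $m \colon \mathrm{cod}\, Uj \to \mathrm{dom}\, g$ to $(m \cdot Uj,\, gm) \colon Uj \to g$; dually a $(\K, g)$-lifting operation is a section of the corresponding transformation over $\K^\op$. Since $VF = U$, restriction along $F$ carries the $\K$-transformation to the $\J$-one, and $\Rl[F]$ acts, over each $g$, by restricting a section along $F$. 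First I would show this restriction is a bijection on sections for each $g$, and then upgrade to an isomorphism of categories using concreteness.

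So fix $g$ and a $(\J,g)$-lifting operation $\phi$; I construct its unique extension to a $(\K,g)$-lifting operation $\bar\phi$. Since the identity $2$-cell exhibits $V$ as the pointwise $\Lan_F U$, each $k \in \K$ carries a colimit presentation $Vk = \colim_{(j,\alpha)\in F/k} Uj$ in $\C^\atwo$, with cocone injections $\nu_{j,\alpha} \colon Uj \to Vk$; as colimits in $\C^\atwo$ are computed pointwise, both $\mathrm{dom}$ and $\mathrm{cod}$ send this to a colimit in $\C$, so the families $(\mathrm{dom}\,\nu_{j,\alpha})$ and $(\mathrm{cod}\,\nu_{j,\alpha})$ are jointly epimorphic. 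Given a square $(u,v) \colon Vk \to g$, I restrict along each $\nu_{j,\alpha}$ and apply $\phi_j$ to obtain fillers $d_{j,\alpha} = \phi_j(u\cdot \mathrm{dom}\,\nu_{j,\alpha},\, v \cdot \mathrm{cod}\,\nu_{j,\alpha})$. Naturality of $\phi$ in $\J$ makes $(d_{j,\alpha})$ a cocone under $\mathrm{cod}(U\thg)$ over $F/k$ with apex $\mathrm{dom}\,g$, so it factors through a unique $\bar\phi_k(u,v)\colon \mathrm{cod}\,Vk \to \mathrm{dom}\,g$.

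It then remains to verify the relevant properties. That $\bar\phi_k(u,v)$ is a genuine filler---$\bar\phi_k(u,v)\cdot Vk = u$ and $g\cdot \bar\phi_k(u,v) = v$---follows by precomposing with the jointly epic families and using the square identity $Vk \cdot \mathrm{dom}\,\nu_{j,\alpha} = \mathrm{cod}\,\nu_{j,\alpha} \cdot Uj$ together with the fact that each $d_{j,\alpha}$ fills its square; naturality of $\bar\phi$ in $k$ follows from uniqueness of these factorisations. Conversely, any $(\K,g)$-lifting operation restricts along $F$, and the identity-unit condition---which says the cocone component at $(j,\id_{Fj})$ is the identity $Uj \to VFj = Uj$---guarantees both that $\bar\phi$ restricts back to $\phi$ and that \emph{any} $\K$-section extending $\phi$ coincides with $\bar\phi$, its values being forced on the jointly epic injections. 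This yields the bijection on sections over each $g$. Finally, since $\Rl[\J]$ and $\Rl[\K]$ are concrete over $\C^\atwo$ (Section~\ref{sec:cofibr-fibr-gener}) and $\Rl[F]$ commutes with the forgetful functors, it suffices to check that a map $w \colon g \to h$ of $\C^\atwo$ respects the chosen $\K$-lifting operations iff it respects their $\J$-restrictions; the forward implication is immediate, and the reverse follows by the same jointly-epic argument, since the commutation condition may be tested on the $\nu_{j,\alpha}$. Thus $\Rl[F]$ is bijective on objects and fully faithful over $\C^\atwo$, hence an isomorphism. The ``in particular'' clause is then immediate: if $F$ is dense then $k = \colim_{F/k} Fj$ canonically, and if $V$ preserves these colimits then $Vk = \colim_{F/k} VFj = \colim_{F/k} Uj$, which is exactly the statement that the identity $2$-cell exhibits $V$ as the pointwise $\Lan_F U$.

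The main obstacle I anticipate is the bookkeeping in the gluing step: marshalling the cocone compatibility out of naturality of $\phi$, and confirming---via the pointwise nature of colimits in $\C^\atwo$, which makes the domain and codomain injections jointly epic---that both triangle identities for $\bar\phi_k(u,v)$ descend from those for the $d_{j,\alpha}$. Everything else is a formal consequence of the colimit universal property and of concreteness.
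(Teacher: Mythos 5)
Your proposal takes a genuinely different route from the paper's proof. The paper never constructs the extension of a lifting operation by hand: it observes that a $(\K,g)$-lifting operation is a section of a map between presheaves of the form $\C^\atwo(V\thg,x)$, that the hypothesis makes each such presheaf the right Kan extension $\Ran_{F^\op}\,\C^\atwo(U\thg,x)$ of its restriction (pointwise Kan extensions being preserved by the representables $\C^\atwo(\thg,x)$), and hence that restriction along $F^\op$ is fully faithful on the full subcategory of $[\K^\op,\Set]$ spanned by these presheaves; the bijection on sections and the transfer of the morphism condition are then purely formal. Your proof unfolds this representability into the colimit formula $Vk \cong \colim_{F/k}U$ in $\C^\atwo$ and glues fillers explicitly. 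The mathematical engine is the same, but your version is more elementary and explicit, while the paper's is shorter and---importantly---immune to the subtlety discussed next, which in your write-up surfaces as an actual error.

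The error is the assertion that ``colimits in $\C^\atwo$ are computed pointwise'', so that both $(\mathrm{dom}\,\nu_{j,\alpha})$ and $(\mathrm{cod}\,\nu_{j,\alpha})$ are jointly epimorphic. The lemma carries no cocompleteness hypothesis on $\C$, and a colimit that happens to exist in the functor category $\C^\atwo$ need not be preserved by evaluation at the two objects of $\atwo$ (easy poset counterexamples exist); note that ``pointwise'' in ``pointwise Kan extension'' refers to the objects $k \in \K$, not to the components of the arrows of $\C$---the colimits $Vk \cong \colim_{F/k}U$ are colimits in $\C^\atwo$ and nothing more. The codomain half of your claim is salvageable by a different argument: since $\mathrm{cod} \dashv \mathrm{id} \colon \C^\atwo \to \C$ (a fact the paper's proof also uses), $\mathrm{cod}$ preserves all colimits; this gives joint epicness of $(\mathrm{cod}\,\nu_{j,\alpha})$ and, what your gluing step actually requires, the full universal property of $\mathrm{cod}\,Vk$ as a colimit in $\C$, without which $\bar\phi_k(u,v)$ is not even defined. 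The domain half, used only to verify $\bar\phi_k(u,v)\cdot Vk = u$, must be replaced: first establish $g \cdot \bar\phi_k(u,v) = v$ via the cod-family; then $(u,v)$ and $(\bar\phi_k(u,v)\cdot Vk,\, v)$ are both morphisms $Vk \to g$ of $\C^\atwo$ agreeing after precomposition with every $\nu_{j,\alpha}$, and a colimit cocone is jointly epic in $\C^\atwo$ itself, so they coincide. With this repair the rest of your argument goes through as written, since every other equation you test (naturality in $k$, uniqueness of the extension via the identity unit $\nu_{j,\id_{Fj}} = \id$, equivalently $\nu_{j,\alpha} = V\alpha$, and the transfer of the morphism condition) lives under $\mathrm{cod}$ and so only needs the cod-side facts. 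Alternatively, if one assumes $\C$ cocomplete---as it is in the paper's only application of the lemma, where $\C$ is locally presentable---your pointwise claim becomes true and no repair is needed; but the lemma as stated is more general than that.
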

\begin{proof}
  As in the proof of Proposition~\ref{prop:6}, an element of
  $\K^{\pitchfork}$ is a pair $(g,\phi_{\thg g})$ where $g \colon C
  \to D$ in $\C$ and $\phi_{\thg g}$ is a section of the $\psi_{\thg
    g} \colon \C(\mathrm{cod}\ V\thg, C) \to \C^\atwo(V\thg, g)$
  of~\eqref{eq:35}; or equally (as $\mathrm{cod} \dashv \id \colon
  \C^\atwo \to \C$) a section of $(1, g) \cdot (\thg) \colon
  \C^\atwo(V\thg, 1_C) \to \C^\atwo(V\thg, g)$. In these terms, a map
  $(g, \phi_{\thg g}) \to (h, \phi_{\thg h})$ of $\Rl[\K]$ is a map $g
  \to h$ of $\C^\atwo$ for which the induced $\C^{\atwo}(V\thg,g) \to
  \C^{\atwo}(V\thg,h)$ commutes with the given sections. The map
  $\Rl[\K] \to \Rl$ is induced by the restriction functor $[F^{\op},1]
  \colon [\K^{\op},\Set] \to [\J^{\op},\Set]$, and sends
  $(g, \phi_{\thg g})$ to the pair $(g,\phi')$ with second component
  \[
  \phi' = \phi_{\thg g} F \colon \C^{\atwo}(U\thg,g) \to \C^{\atwo}(U
  \thg,1_C)\rlap{ .}
  \]
  So to prove $\Rl[F] \colon \Rl[\K] \to \Rl$ is an isomorphism, it
  will suffice to show that $[F^{\op},1]$ becomes fully faithful when
  restricted to the full subcategory of $[\K^\op, \Set]$ on the
  presheaves of the form $\C^{\atwo}(V\thg,g)$. Now, to say that the
  identity $2$-cell exhibits $V$ as $\mathrm{Lan}_F U$ is equally to
  say that it exhibits $V^{\op}$ as $\mathrm{Ran}_{F^\op} U^\op$; the
  continuous $\C^\atwo(\thg, g) \colon (\C^\atwo)^\op \to \Set$
  preserves this pointwise Kan extension, and so the identity $2$-cell
  exhibits $\C^{\atwo}(V\thg,g)$ as the right Kan extension of
  $\C^{\atwo}(U\thg ,g)$ along $F^{\op}$. The universal property of
  this right Kan extension now immediately implies that the action of
  $[F^\op, 1]$ on morphisms
  \[
  [\K^{op},\Set](\C^{\atwo}(V\thg,f),\C^{\atwo}(V\thg,g)) \to
  [\J^{op},\Set](\C^{\atwo}(U\thg,f),\C^{\atwo}(U\thg,g))\] is
  invertible for any $f$ and $g$ in $\C^\atwo$, as required. Finally,
  note that if $F$ is dense, then the identity $2$-cell exhibits
  $1_\K$ as $\mathrm{Lan}_F F$, and that if $V$ preserves the colimits
  exhibiting this density, then it preserves this left Kan extension,
  so that the identity $2$-cell exhibits $V$ as $\Lan_F U$.
\end{proof}

\begin{Thm}\label{thm:accessible}
  An \awfs on a locally presentable category is accessible if and only
  if it is cofibrantly generated by some small double category.
\end{Thm}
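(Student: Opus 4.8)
The forward (``if'') implication is already Proposition~\ref{prop:4}: over a locally presentable $\C$, any small double category over $\Sq{\C}$ cofibrantly generates an \awfs, and that \awfs is accessible. So the work is all in the converse, and I will take $(\mathsf L, \mathsf R)$ to be an accessible \awfs on the locally presentable $\C$. The plan is to exhibit a small sub-double-category $\dcat J \hookrightarrow \DCoalg{L}$ which is ``dense enough'' that lifting against $\dcat J$ detects exactly the same algebras as lifting against all of $\DCoalg{L}$. Since Proposition~\ref{prop:3} already gives the class-cofibrant generation $\DAlg{R} \cong \Drl[\DCoalg{L}]$, it will then suffice to produce an isomorphism $\Drl[\dcat J] \cong \Drl[\DCoalg{L}]$ over $\Sq{\C}$, for this will yield $\DAlg{R} \cong \Drl[\dcat J]$.

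First I would record that $\Coalg{L}$ is locally presentable. As $\mathsf L$ is an accessible comonad on the locally presentable---hence accessible---category $\C^\atwo$, its category of coalgebras is accessible by closure of accessible categories under Eilenberg--Moore objects of comonads (exactly as invoked in the proof of Proposition~\ref{prop:2}(b)); and it is cocomplete because the comonadic $U^\mathsf L \colon \Coalg{L} \to \C^\atwo$ creates all colimits. Thus $\Coalg{L}$ is locally $\kappa$-presentable for suitable $\kappa$, and $U^\mathsf L$, being a left adjoint, preserves all colimits and in particular all $\kappa$-filtered ones.

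The delicate choice is to take $\kappa$ large enough that the $\kappa$-presentable $\mathsf L$-coalgebras are closed under the vertical composition of $\DCoalg{L}$, contain the vertical identities on $\kappa$-presentable objects, and have $\kappa$-presentable domains and codomains. This is possible because the composition functor $\circ \colon \Coalg{L} \times_\C \Coalg{L} \to \Coalg{L}$ is accessible: composing it with $U^\mathsf L$ gives $(\alg g, \alg h) \mapsto hg$, which factors through $U^\mathsf L \times_\C U^\mathsf L$ and the cocontinuous composition functor $\C^{\mathbf 3} \to \C^\atwo$, and so preserves $\kappa$-filtered colimits; as $U^\mathsf L$ creates and hence reflects these, $\circ$ preserves them too. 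With such a $\kappa$, I let $\dcat J \hookrightarrow \DCoalg{L}$ be the full sub-double-category on the $\kappa$-presentable objects of $\C$ and the $\kappa$-presentable $\mathsf L$-coalgebras. This is a genuine small double category over $\Sq{\C}$, and its category of vertical arrows $\J_1$ is a small dense subcategory of $\Coalg{L}$.

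It then remains to promote this density to an isomorphism of algebra double categories, and this is where the main obstacle lies. I would run through the equaliser presentation from the proof of Proposition~\ref{prop:4}: on vertical arrows, both $(\Drl[\dcat J])_1$ and $(\Drl[\DCoalg{L}])_1$ are computed as the equaliser of the parallel pair $\Rl[m]$ and $\delta$ (composition versus lift-then-lift) relating $\Rl$ of the vertical arrows to $\Rl$ of the composable pairs. Since $\J_1 \hookrightarrow \Coalg{L}$ is dense and $U^\mathsf L$ preserves the $\kappa$-filtered colimits witnessing this, Lemma~\ref{lem:3} gives $\Rl[\Coalg{L}] \cong \Rl[\J_1]$ over $\C^\atwo$; applying the same lemma to the composable-pair categories---using that $\J_2 = \J_1 \times_{\J_0} \J_1$ is dense in $(\Coalg{L})_2$ and that $(\alg g, \alg h) \mapsto hg$ is accessible---gives $\Rl[(\Coalg{L})_2] \cong \Rl[\J_2]$. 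Because $\dcat J$ is a genuine \emph{sub}-double-category, both $\Rl[m]$ and $\delta$ commute with these restriction isomorphisms: the composite of two $\kappa$-presentable coalgebras again lies in $\J_1$, and the lift-then-lift operation on a $\kappa$-presentable pair invokes only liftings against $\kappa$-presentable coalgebras. Hence the induced map of equalisers is an isomorphism $(\Drl[\DCoalg{L}])_1 \cong (\Drl[\dcat J])_1$ over $\C^\atwo$, and as both are concrete over $\C$ with identity object-component this gives $\Drl[\DCoalg{L}] \cong \Drl[\dcat J]$ over $\Sq{\C}$, completing the argument. The hardest point is exactly this transfer from the one-dimensional Lemma~\ref{lem:3} to the double-categorical statement: one must ensure that the small dense family of coalgebras can be organised into an honest small \emph{double} category, i.e.\ closed under vertical composition, and it is precisely this requirement that forces the accessibility analysis of the composition functor and the careful choice of $\kappa$.
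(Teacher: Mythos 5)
Your proposal has the same skeleton as the paper's proof: the forward direction is Proposition~\ref{prop:4}, and for the converse you use Proposition~\ref{prop:3} to reduce to finding a small sub-double category $\dcat J \hookrightarrow \DCoalg{L}$ for which the restriction $\Drl[\DCoalg{L}] \to \Drl[\dcat J]$ is invertible, deducing that invertibility from Lemma~\ref{lem:3} applied to both legs of the equaliser presentation~\eqref{eq:equaliser}. Where you differ is in the choice of $\dcat J$, and this difference creates your first gap. The paper takes as vertical arrows \emph{all} coalgebras between ($\lambda$-presentable) objects, so that closure under vertical composition and identities is automatic; you instead take the $\kappa$-presentable coalgebras, which forces you to argue that composition preserves $\kappa$-presentability. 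Your justification---that the composition functor is accessible---does not suffice as stated: a functor preserving $\kappa$-filtered colimits need not preserve $\kappa$-presentable objects \emph{for that same $\kappa$}. What is true is that an accessible functor preserves $\mu$-presentables for arbitrarily large regular $\mu$ (the uniformization theorem; see~\cite{Makkai1989Accessible}), and you need this simultaneously for composition, the identity functor $\C \to \Coalg{L}$, domain, codomain, and the projections $\Coalg{L}_2 \to \Coalg{L}$. The paper avoids the issue entirely: all of these functors are cocontinuous between locally presentable categories, hence have accessible right adjoints by~\cite[Satz~14.6]{Gabriel1971Lokal}, and a left adjoint with $\lambda$-accessible right adjoint preserves $\lambda$-presentables, so a single $\lambda$ serves for the whole diagram.

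The more substantial gap is your treatment of the composable-pairs category: you simply assert that $\J_2 = \J_1 \times_{\J_0} \J_1$ is dense in $\Coalg{L}_2$ and that $(\alg g, \alg h) \mapsto hg$ preserves the colimits exhibiting this density, but this is precisely where most of the paper's proof is spent. To establish it one needs (i) that $\Coalg{L}_2$ is locally presentable, which the paper obtains because the defining strict pullback is a bipullback (\cite{Joyal1993Pullbacks}, using that the domain functor on coalgebras lifts isomorphisms) of cocontinuous functors between locally presentable categories, hence locally presentable by~\cite[Theorem~3.15]{Bird1984Limits}; (ii) that for suitable $\lambda$ the $\lambda$-presentable objects of $\Coalg{L}_2$ lie inside $\J_2$---the projections preserve $\lambda$-presentability---so that density of the full subcategory $\J_2$ follows from~\cite[Theorem~5.13]{Kelly1982Basic}; and (iii) that the functor $\Coalg{L}_2 \to \C^\atwo$ preserves the density colimits, for which the paper uses cocontinuity, whereas your appeal to mere accessibility only works once one knows those colimits are $\kappa$-filtered, which is itself part of what must be proved. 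None of this is fatal: your outline is the right one and every missing fact is true and provable with the tools the paper cites. But as written, the proof takes for granted the statements whose verification constitutes the real content of the converse direction.
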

\begin{proof}
  One direction is Proposition~\ref{prop:4}; for the other, let $(\mathsf
  L, \mathsf R)$ be an accessible \awfs on the locally presentable
  $\C$. By Proposition~\ref{prop:3} we have an isomorphism
  $\Drl[\DCoalg{L}] \cong \DAlg{R}$ over $\Sq{\C}$; it will thus
  suffice to find a small sub-double category $\DCoalg{L_{\lambda}}
  \hookrightarrow \DCoalg{L}$ with the induced 
  $\Drl[\DCoalg{L}] \to \Drl[\DCoalg{L_{\lambda}}]$ invertible.
  To this end, we consider the following diagram, whose left column is
  yet to be defined:
\[
\cd[@C+2em@-0.5em]{
\Coalg{L_\lambda}_2 \ar[r]^-{j_2} \ar[d] \ar@<-8pt>[d] \ar@<8pt>[d] &
\Coalg{L}_2 \ar[r]^-{V_2} \ar[d]|{m_\mathsf L}
\ar@<-8pt>[d]_{p_\mathsf L} \ar@<8pt>[d]^{q_\mathsf L} &
\C^\athree \ar[d]|{m} \ar@<-8pt>[d]_{p} \ar@<8pt>[d]^{q} \\
\Coalg{L_\lambda} \ar[r]^-{j_1} \ar@{<-}[d] \ar@<-8pt>[d] \ar@<8pt>[d] &
\Coalg{L} \ar[r]^-{V_1} \ar@{<-}[d]|{i_\mathsf L}
\ar@<-8pt>[d]_{d_\mathsf L} \ar@<8pt>[d]^{c_\mathsf L} &
\C^\atwo \ar@{<-}[d]|{i} \ar@<-8pt>[d]_{d} \ar@<8pt>[d]^{c} \\
\C_\lambda \ar[r]^-{j_0} & \C \ar[r]^-1 & \C\rlap{\ \ \text.}
}
\]
As colimits are pointwise in functor categories the right column is
composed of cocontinuous functors between cocomplete categories; since
$V_1$ creates colimits, the same is true for $\Coalg{L}$ and each of
$d_\mathsf L$, $i_\mathsf L$ and $c_\mathsf L$. Moreover $d_\mathsf L$
has the isomorphism-lifting property since $V_1$ and $d$ do, whence
by~\cite{Joyal1993Pullbacks} the pullback $\Coalg{L}_{2}$ of
$d_\mathsf L$ along $c_\mathsf L$ is also a bipullback. Consequently,
$\Coalg{L}_{2}$ is cocomplete and the projections $q_{\mathsf L}$ and
$p_{\mathsf L}$ cocontinuous; so too is $m_\mathsf L$, since
$d_\mathsf L$ and $c_\mathsf L$ jointly create colimits; and similarly
for $V_{2}$. Thus the right two columns are composed of
cocomplete categories and cocontinuous functors.

Now $\C$ is locally presentable, whence also $\C^\atwo$ and
$\C^\mathsf 3$; and since $\mathsf L$ is accessible, $\Coalg{L}$ is
locally presentable as in the proof of Proposition~\ref{prop:2}.
Finally, $\Coalg{L}_2$ is a bipullback of cocontinuous functors
between locally presentable categories, and so itself locally
presentable by~\cite[Theorem~3.15]{Bird1984Limits}. So the right two
columns are composed of locally presentable categories and
cocontinuous functors.

Any cocontinuous functor between locally presentable categories has an
accessible right adjoint~\cite[Satz~14.6]{Gabriel1971Lokal}; thus
there is a regular $\lambda$ so that the right two columns are
composed of locally $\lambda$-presentable categories and functors with
$\lambda$-accessible right adjoints; it follows that each left adjoint
preserves $\lambda$-presentable objects. We now define
$\DCoalg{L_{\lambda}}$\begin{footnote}{Our notation
    $\DCoalg{L_{\lambda}}$ acknowledges the fact that this is equally
    the double category of coalgebras associated to the restricted
    \awfs on $\C_{\lambda}$.}\end{footnote} as the full sub-double
category of $\DCoalg{L}$ with as objects, isomorphism-class
representatives of the $\lambda$-presentable objects of $\C$. This
category is small, and the inclusion $j_0 \colon \C_\lambda \to \C$ is
dense; we claim that $j_{1}$ and $j_{2}$ are too. For $j_{1}$, we
consider the full dense subcategory $(\Coalg{L})_{\lambda}$ of
$\lambda$-presentable objects in $\Coalg{L}$. Because $d_\mathsf L$
and $c_\mathsf L$ preserve $\lambda$-presentability it follows that
each $\lambda$-presentable of $\Coalg{L}$ has $\lambda$-presentable
domain and codomain; so the fully faithful dense functor
$(\Coalg{L})_{\lambda} \to \Coalg{L}$ factors through the full
inclusion $j_{1} \colon \Coalg{L_{\lambda}} \to \Coalg{L}$, whence, by
\cite[Theorem~5.13]{Kelly1982Basic} $j_{1}$ is dense. The density of
$j_{2}$ follows in the same manner, by considering the full dense
subcategory $(\Coalg{L}_{2})_{\lambda}$ of $\Coalg{L}_{2}$ and arguing
that it is contained in $(\Coalg{L_{\lambda}})_{2}$.

Since both $j_{1}$ and $j_{2}$ are dense, and since all of the
functors to their right are cocontinuous it follows from
Lemma~\ref{lem:3} that both $\Rl[j_{1}]$ and $\Rl[j_2]$ are
isomorphisms of categories; whence $\Drl[j_1]$ and $\Drl[j_2]$ as
displayed in the right two columns of
\[
\cd[@-0.6em@C+0.5em]{ \Drl[\DCoalg{L}] \ar[r] \ar[d]_{\Drl[j]} &
  \Drl[\Coalg{L}] \ar[d]^{\Drl[j_1]} \ar@<3pt>[r]^{\Drl[m]}
  \ar@<-3pt>[r]_{\delta}
  & \Drl[(\Coalg{L}_2)] \ar[d]_{\Drl[j_2]}\\
  \Drl[\DCoalg{L_\lambda}] \ar[r] & \Drl[\Coalg{L_\lambda}]
  \ar@<3pt>[r]^{\Drl[m]} \ar@<-3pt>[r]_{\delta} &
  \Drl[(\Coalg{L_\lambda}_2)] }
\]
are isomorphisms of double categories. But both rows are, as
in~\eqref{eq:equaliser}, equalisers, and it follows that the induced
double functor $\Drl[j]$ on the left is invertible as claimed.
\end{proof}

\section{Fibre squares and enriched cofibrant generation}
\label{sec:enrich-small-object}
We conclude this paper with a further batch of examples based on a
surprisingly powerful modification of the small object argument of
Proposition~\ref{prop:6}. Given a well-behaved monoidal category $\V$
equipped with an \awfs $(\mathsf L, \mathsf R)$, a well-behaved
$\V$-category $\C$, and a small category $U \colon \J \to
(\C_0)^\atwo$ over $(\C_0)^\atwo$, it constructs the ``free $(\mathsf
L, \mathsf R)$-enriched \awfs cofibrantly generated by $\J$''. This is
an \awfs on $\C_0$ whose algebras $\alg g \colon C \to D$ are maps
equipped with, for every $Uj \colon A \to B$ in the image of $U$, a
choice of $\mathsf R$-map structure on the canonical comparison map
$\C(B,C) \to \C(B,D) \times_{\C(A,D)} \C(A,C)$ in $\V$, naturally in
$j$.

The ``enriched small object argument'' which builds \awfs of this kind
is most properly understood in the context of \emph{monoidal algebraic
  weak factorisation systems}~\cite{Riehl2013Monoidal}; and as indicated
in the introduction, a comprehensive treatment along those lines must
await a further paper. Here, we content ourselves with giving the
construction and a range of applications.

\subsection{Fibre squares}
\label{sec:fibre-squares-1}
The cleanest approach to enriched cofibrant generation makes use of
the following \emph{fibre square} construction. It associates to any
\awfs $(\mathsf L, \mathsf R)$ on a category $\C$ with pullbacks an
\awfs on the arrow category $\C^\atwo$ whose algebras are the
\emph{$\mathsf R$-fibre squares}: maps $(u,v) \colon f \to g$ of
$\C^\atwo$ as to the left of
\[
\cd[@-1.4em@C-0.25em]{
A \ar@/^1em/[drrr]^{u} \ar@/_1em/[dddr]_{f}
\ar@{.>}[dr]|-{k} \\ 
& B \times_D C \ar[rr] \ar[dd] \pushoutcorner & & C
\ar[dd]^{g} \\ \\
& B \ar[rr]_-{v} & & D
}\qquad \qquad
\cd[@+0.2em]{
  \cat{Fibre}_\mathsf R(\C) \ar[r] \ar[d]_V \pushoutcorner & \Alg{R}
  \ar[d]^{U^\mathsf R} \\
  (\C^{\atwo})^{\atwo} \ar[r]_-{(u,v) \mapsto k} & \C^\atwo}
\]
equipped with an $\mathsf R$-algebra structure on the comparison map
$k \colon A \to B \times_D C$. The $\mathsf R$-fibre squares are the
objects of a category $\cat{Fibre}_\mathsf R(\C)$, whose morphisms are
determined by way of the pullback square on the right above.

We may compose $\mathsf R$-fibre squares: given $(u,v) \colon f \to g$
with $\mathsf R$-algebra structure $\alg k \colon A \to B \times_D C$
and $(s,t) \colon g \to h$ with structure $\alg \ell \colon C \to D
\times_F E$, we can pull back $\alg \ell$ along $v \times_F E$ and use
Proposition~\ref{prop:overcodomain2} to obtain an algebra structure
$v^\ast \alg \ell \colon B \times_D C \to B \times_F E$; the composite
algebra $v^\ast \alg \ell \cdot \alg k \colon A \to B \times_D C \to B
\times_F E$ now equips $(s,t) \cdot (u,v) \colon f \to h$ with
$\mathsf R$-fibre square structure. In this way we obtain a double
category $\dcat{Fibre}_\mathsf R(\C) \to \Sq{\C^\atwo}$ which is
concrete over $\C^\atwo$ and easily seen to be right-connected; so it
will be the double category of algebras for an \awfs on $\C^\atwo$ as
soon as $V \colon \cat{Fibre}_\mathsf R(\C) \to (\C^\atwo)^\atwo$ is
strictly monadic. As a pullback of the strictly monadic $U^\mathsf R
\colon \Alg{R} \to \C^\atwo$, this $V$ will always create $V$-absolute
coequaliser pairs; so it suffices to show that it has a left adjoint.
Given a square in $\C$ as on the left below, we factor the induced $k
\colon A \to B \times_D C$ as $\r k \cdot \l k \colon A \to Ek \to B
\times_D C$, yielding the factorisation
\begin{equation}
\cd{
  A \ar[r]^u \ar[d]_f & C \ar[d]^g \ar@{}[drr]|{=} & &
A \ar[d]_{f} \ar[r]^-{\l k} & Ek \ar[r]^-{\pi_2 \cdot \r k}
\ar[d]_{\pi_1 \cdot \r k} & C \ar[d]^g \\
B \ar[r]_v & D & & 
  B \ar[r]_1 & B \ar[r]_v & D}\label{eq:8}
\end{equation}
on the right. The rightmost square, when equipped with the algebra
structure $\fr k \colon Ek \to B \times_D C$ on the comparison map,
now comprises the free $\mathsf R$-fibre square on $(u,v)$. This shows
that $V$ has a left adjoint, whence by Theorem~\ref{thm:recognition},
$\dcat{Fibre}_\mathsf R(\C) \to \Sq{\C^\atwo}$ is the double category
of algebras for an \awfs on $\C^\atwo$; it is not hard to calculate
that the corresponding coalgebras are maps $(u,v) \colon f \to g$ for
which $u$ is equipped with $\mathsf L$-coalgebra structure and $v$ is
invertible. Note that if
$(\mathsf L, \mathsf R)$ is an accessible \awfs on a locally
presentable category, then the fibre square \awfs on $\C^\atwo$ is
also accessible, since its factorisations~\eqref{eq:8} are constructed
using the accessible $L$ and $R$ and finite limits in $\C$.

\begin{Ex}\label{ex:3}
  When $\C = \cat{Set}$ and $(\mathsf L, \mathsf R)$ is the split epi
  \awfs of Section~\ref{sec:split-epimorphisms}, the $\mathsf R$-fibre
  squares are the \emph{algebraic weak pullbacks}: squares
  $(u,v) \colon f \rightarrow g$ as to the left of~\eqref{eq:8} such
  that, for every $b \in B$ and $c \in C$ with $vb = gc$, there is
  provided a choice of $a \in A$ with $fa = b$ and $ua = c$. Now, the
  Yoneda embedding $y \colon \atwo^\op \rightarrow \cat{Set}^\atwo$
  sends the unique arrow $0 \rightarrow 1$ of $\atwo$ to the
  unique arrow $y(1) \rightarrow y(0)$ in $\cat{Set}^\atwo$:
  \begin{equation*}
    \cd{
      {0} \ar[r]^-{!} \ar[d]_{y(1) = !} &
      {1} \ar[d]^{1 = y(0)} \\
      {1} \ar[r]_-{1} &
      {1}\rlap{ ,}
    }
  \end{equation*}
  and to equip $(u,v) \colon f \rightarrow g$ with a lifting structure
  against $y(1) \rightarrow y(0)$ is precisely to make it into an
  algebraic weak pullback. In fact, it is not hard to show that the
  algebraic weak pullback \awfs on $\cat{Set}^\atwo$ is cofibrantly
  generated in the sense of Section~\ref{sec:cofibr-fibr-gener} by the
  single arrow $y(1) \rightarrow y(0)$. Moreover, as a cocomplete
  category, $\cat{Set}^\atwo$ is freely generated by this arrow;
  combining the preceding two facts with Proposition~\ref{prop:15}, we
  conclude that the algebraic weak pullback \awfs on $\cat{Set}$ is in
  fact the \emph{free cocomplete \awfs generated by an
    $\mathsf L$-map}. By this we mean that, for any \awfs
  $(\mathsf L, \mathsf R)$ on a cocomplete category $\C$, there is an
  equivalence of categories
  \begin{equation*}
    \Ladj(\, (\cat{Set}^\atwo, \mathsf L_{\mathrm{wkpb}}, \mathsf R_{\mathsf{wkpb}}),\, (\C, \mathsf L, \mathsf R) ) \simeq \Coalg{L}\rlap{ ,}
  \end{equation*}
  pseudonatural in $(\C, \mathsf L, \mathsf R)$. By a similar argument
  we see that the split epi \awfs on $\cat{Set}$ is the \emph{free
    cocomplete \awfs generated by a cofibrant object}; for a different
  proof of this fact, see~\cite[Corollary~15]{Bourke2014AWFS2}.
\end{Ex}

\subsection{Enriched cofibrant generation}
\label{sec:enrich-cofibr-gener}
Suppose now that $\V$ is a locally presentable symmetric monoidal
closed category: thus a suitable base for enriched category theory.
Let $(\mathsf L, \mathsf R)$ be an accessible \awfs on $\V$, and let
$\C$ be a $\V$-category whose underlying category $\C_0$ is locally
presentable; finally, let $U \colon \J \to (\C_0)^\atwo$ be a small
category over $(\C_0)^\atwo$. By applying its monad and comonad
pointwise, we may lift the \awfs $(\mathsf L, \mathsf R)$ on $\V$ to
an accessible \awfs on $[\J^\op, \V]$; now applying the fibre square
construction of the preceding section to this yields an accessible
\awfs on $[\J^\op, \V]^\atwo$. There is a right adjoint functor
$\tilde U \colon \C_0 \to [\J^\op, \V]^\atwo$ sending $x \in \C_0$ to
the arrow $\C(\mathrm{cod}\ U\thg, x) \to \C(\mathrm{dom}\ U\thg, x)$
of $[\J^\op, \V]$; and we now define the \emph{enriched \awfs
  cofibrantly generated by $\J$} to be the projective lifting of the
fibre square \awfs on $[\J^\op, \V]^\atwo$ along $\tilde U$; note that
the existence of this lifting is guaranteed
by Proposition~\ref{prop:2}(a). The double category of algebras of the
\awfs $(\mathsf L_\J, \mathsf R_\J)$ so obtained is thus defined by a
pullback square
\[
\cd{
\DAlg{\mathsf R_\J} \pushoutcorner \ar[r] \ar[d] &
\dcat{Fibre}_{[\J^\op, \mathsf R]}([\J^\op, \V]) \ar[d] \\
\Sq{\C_0} \ar[r]_-{\Sq{\tilde U}} & \Sq{[\J^\op, \V]^\atwo}\rlap{ ;}
}
\]
so that in particular, an algebra structure on a map $g \colon C \to
D$ is given by the choice, for each $j \in \J$ with image $Uj \colon A
\to B$ in $\C^\atwo$, of an $\mathsf R$-algebra structure on the
comparison map $\C(B,C) \to \C(A,C) \times_{\C(A,D)} \C(B,D)$,
naturally in $j$; we may say that $g$ is equipped with an
\emph{enriched lifting operation} against $\J$.

\subsection{Examples}
\label{sec:examples}
The remainder of the paper will be given over to a range of examples
of enriched cofibrant generation. All these examples will in fact be
generated by a mere \emph{set} of maps $J$, seen as a discrete
subcategory $J \hookrightarrow (\C_0)^\atwo$ of a locally presentable $\C_0$.

\begin{Exs} 
  \begin{enumerate}[(i)]
  \item Let $\V = \Set$ and $(\mathsf L, \mathsf R)$ be the \awfs for
    split epis thereon; then the notion of enriched cofibrant
    generation reduces to the unenriched one of
    Section~\ref{sec:cofibr-fibr-gener}. A full treatment of enriched
    cofibrant generation would in fact generalise each aspect of the
    theory developed in Sections~\ref{sec:cofibrant-generation-1}
    and~\ref{sec:cofibr-gener-double}; but as we have said above, this
    must await another paper. 
\vskip0.25\baselineskip

  \item More generally, let $\V$ be any locally presentable symmetric
    monoidal closed category, and let $(\mathsf L, \mathsf R)$ be the
    split epi \awfs thereon. The enriched \awfs cofibrantly generated
    by $J \hookrightarrow \C^\atwo$ is precisely that obtained by the
    ``enriched small object argument''
    of~\cite[Proposition~13.4.2]{Riehl2014Categorical}.\vskip0.25\baselineskip

  \item Let $\V = \Set$ and let $(\mathsf L, \mathsf R)$ be the
    initial \awfs thereon; its algebra category is the full
    subcategory of $\Set^\atwo$ on the isomorphisms. In this
    case, the algebra category of the enriched \awfs $(\mathsf L_J,
    \mathsf R_J)$ generated by $J \hookrightarrow \C^\atwo$ may be
    identified with the full subcategory of $\C^\atwo$ on those maps
    with the \emph{unique} right lifting property against each map in
    $J$. In particular, since $\Alg{R_\mathnormal{J}} \to \C^\atwo$ is
    fully faithful, Proposition~\ref{prop:8} applies, so that
    $(\mathsf L_J, \mathsf R_J)$ in fact describes the
    \emph{orthogonal} factorisation system whose left class is
    generated by $J$~\cite[\S2.2]{Freyd1972Categories}.
  \end{enumerate}\label{ex:2}
\end{Exs}

\begin{Exs}
  For our next examples of enriched cofibrant generation, we take $\V
  = \cat{Cat}$ equipped with the \awfs $(\mathsf L, \mathsf R)$ for
  retract equivalences of Section~\ref{sec:lalis}.
  \begin{enumerate}[(i)]
  \item Take $\C = \Cat$ and let $J$ comprise the single functor $!
    \colon \cat 0 \to \cat 1$. It's easy to see that the enriched
    \awfs generated by $J$ is again that for retract
    equivalences.\vskip0.25\baselineskip
  \item Take $\C = \Cat$ and let $J$ comprise the single functor $\top
    \colon \cat 1 \to \atwo$ picking out the terminal object of
    $\atwo$. An algebra for the enriched \awfs generated by $J$ is a
    functor $G \colon C \to D$ such that the induced $C^\atwo \to D
    \downarrow G$ bears retract equivalence structure, or
    equivalently, is fully faithful and equipped with a section on
    objects. Full fidelity corresponds to the requirement that every
    arrow of $C$ be cartesian over $D$, whereupon a section on objects
    amounts to a choice of cartesian liftings: so an algebra is a
    cloven fibration whose fibres are groupoids. We find further that
    maps of algebras are squares strictly preserving the cleavage, and
    that composition of algebras is the usual composition of
    fibrations. \vskip0.25\baselineskip
  \item Let $(\C, j)$ be a small site, let $\E = \cat{Hom}(\C^\op,
    \Cat)$ be the $2$-category of pseudofunctors, strong
    transformations and modifications $\C^\op \to \Cat$, and let $y
    \colon \C \to \E$ be the Yoneda embedding. If $(U_i \to U : i \in
    I)$ is a covering family in $\C$, then the associated
    \emph{covering $2$-sieve} $f \colon \phi \to yU$ in $\E$ is the
    second half of the pointwise (bijective on objects, fully
    faithful) factorisation $\sum_i yU_i \to \phi \to yU$; recall that
    an object $X \in \E$ is called a \emph{stack} if the restriction
    functor $\E(f, X) \colon \E(yU, X) \to \E(\phi, X)$ is an
    equivalence of categories for each covering $2$-sieve.

    \newcommand{\TAlgs}{\mathsf{T}\text-\cat{Alg}_{s}}
    \newcommand{\TAlg}{\mathsf{T}\text-\cat{Alg}} Consider now the
    $2$-category $\E_s = \cat{Hom}(\C^\op, \cat{Cat})_s$ of
    pseudofunctors, $2$-natural transformations and modifications. By
    the general theory of~\cite{Blackwell1989Two-dimensional}, there
    is an accessible $2$-monad $\mathsf T$ on $\cat{Cat}^\mathrm{\ob
      \C}$ whose $2$-category $\TAlgs$ of strict algebras and strict
    algebra morphisms is $\E_s$---so that in particular, $\E_s$ is
    locally presentable---and whose $2$-category $\TAlg$ of strict
    algebras and pseudomorphisms is $\E$. It follows
    by~\cite[Theorem~3.13]{Blackwell1989Two-dimensional} that the
    inclusion $2$-functor $\E_s \to \E$ has a left adjoint $Q \colon
    \E_s \to \E$. Thus we may identify stacks with pseudofunctors $X$
    such that the restriction functor $\E_s(Qf, X) \colon \E_s(QyU, X)
    \to \E_s(Q\phi, X)$ is an equivalence for each covering $2$-sieve $f$.

    As $\E_s$ is locally presentable, we may consider the enriched
    \awfs thereon generated by the set $\{Qf : \text{$f$ a covering
      $2$-sieve in $\E$}\}$, and by the above, we see that its
    algebraically fibrant objects are a slight strengthening of the
    notion of stack; namely, they are pseudofunctors $X \colon \C^\op
    \to \cat{Cat}$ such that each $\E(f, X) \colon \E(yU, X) \to
    \E(\phi, X)$ is given the structure of a \emph{retract}
    equivalence. \vskip0.25\baselineskip
  \item We may make arbitrary stacks into the algebraically fibrant
    objects of an \awfs by way of the following observations. Given $F
    \colon C \to D$ in $\Cat$, we may form its pseudolimit $D
    \downarrow_{\cong} F$---the category whose objects are triples $(d
    \in D, c \in C, \theta \colon d \cong Fc)$---and choices of
    equivalence section for $D \downarrow_{\cong} F \to D$ now
    correspond to choices of equivalence pseudoinverse for $F$. It
    follows that if $f \colon X \to Y$ is a map in a finitely
    cocomplete $2$-category $\E$, and $\bar f \colon X \to \bar Y$ is
    the injection of $X$ into the pseudocolimit of $f$, then for each
    $F \in \E$, retract equivalence structures on $\E(\bar f, F)$
    correspond with equivalence structures on $\E(f, F)$. 

    In particular, if $(\C, j)$ is a small site and $\E$ and $\E_s$
    are as above, then we may consider the enriched \awfs on $\E_s$
    cofibrantly generated by the set $\{\overline{Qf} : f \text{ a
      covering $2$-sieve in $\E$}\}$. Its algebraically fibrant
    objects are now stacks in the usual sense; to be
    precise, they are pseudofunctors $X$ such that each $\E(yU, X) \to
    \E(\phi, X)$ is provided with a chosen equivalence pseudoinverse.
  \end{enumerate}\label{ex:4}
\end{Exs}
\begin{Exs}
  Our next collection of examples again take $\V = \cat{Cat}$, but now with
  $(\mathsf L, \mathsf R)$ the \awfs for lalis of
  Section~\ref{sec:lalis}. 
  \begin{enumerate}[(i)]
  \item Let $\C = \Cat$ and let $J$ comprise the single functor $!
    \colon \cat 0 \to \cat 1$; then the enriched \awfs generated by
    $J$ is simply that for lalis.\vskip0.25\baselineskip
  \item Let $\C = \Cat$ and let $J$ comprise the single functor $\top
    \colon \cat 1 \to \atwo$ picking out the terminal object of
    $\atwo$. To equip $G \colon C \to D$ with algebra structure for
    the enriched \awfs this generates is to equip the induced $C^\atwo
    \to D \downarrow G$ with a right adjoint right inverse; which
    by~\cite{Gray1966Fibred}, is equally to equip $G$ with the
    structure of a cloven fibration. As before, maps of algebras are
    squares strictly preserving the cleavage; and composition of
    algebras is the usual composition of fibrations.
    \vskip0.25\baselineskip
    \vskip0.25\baselineskip
  \item Generalising the previous two examples, let $\Phi$ be some set
    of small categories, and let $J$ comprise the class of functors
    $\{A \to A_\bot : A \in \Phi\}$ obtained by freely adjoining an
    initial object to each $A$. To equip $G \colon C \to D$ with
    algebra structure for the induced \awfs is to give, for each $A
    \in \Phi$, a right adjoint section for the comparison functor
    $[A_\bot, C] \to [A_\bot, D] \times_{[A,D]} [A,C]$. Since a
    functor out of $A_\bot$ is the same thing as a functor out of $A$
    together with a cone over it, a short calculation shows that this
    amounts to giving, for each $A \in \Phi$, each diagram $F \colon A
    \to C$, and each cone $p \colon \Delta V \to GF$ in $D$, a cone $q
    \colon \Delta W \to F$ in $C$ with $Gq = p$, such that for any
    cone $q' \colon \Delta W' \to F$, each factorisation of $Gq'$
    through $p$ via a map $k \colon GW' \to V$ lifts to a unique
    factorisation of $q'$ through $q$.\footnote{Note that when $A$ is a
    discrete category, this is precisely the notion of
    \emph{$G$-initial
      lifting}~\cite[Definition~10.57]{Adamek1990Abstract} of a
    discrete cone.} In particular, an algebraically fibrant object for
    this \awfs is a category equipped with a choice of limits for all
    diagrams indexed by categories in $\Phi$. \vskip0.25\baselineskip
  \item Generalising the preceding example, let $\C$ be any locally
    presentable $2$-category. For any arrow $f \colon A \to B$ of
    $\C$, let $\bar f \colon A \to \bar B$ denote the injection into
    the lax colimit of the arrow $f$, and for any set $J$ of arrows in
    $\C$, let $\bar J = \{\bar f : f \in J\}$. By using the universal
    property of the colimit, we may calculate that an algebra
    structure on a morphism $g \colon C \to D$ for the \awfs induced
    by $\bar J$ is given by the choice, for every $f \colon A \to B$
    in $J$ and every $2$-cell $\theta$ as on the left below, of a map
    $k$ and $2$-cell $\gamma$ as in the centre with $gk = v$ and $g
    \gamma = \theta$, and such that $\gamma$ is \emph{initial} over
    $g$; thus, for every diagram as on the right with $g \alpha =
    \theta \cdot \beta f$, there exists a unique $2$-cell $\delta
    \colon \ell \Rightarrow k$ with $g \delta = \beta$ and $\alpha =
    \gamma \cdot \delta f$.
    \begin{equation*}
      \cd{
        A \ar[d]_f \ar[r]^u \utwocell{dr}{\theta} & C \ar[d]^g \\
        B \ar[r]_{v} & D
      } \qquad \qquad
      \cd{
        A \ar[d]_f \ar[r]^u \utwocell[0.3]{dr}{\gamma} & C \ar[d]^g \\
        B \ar[r]_{v} \ar[ur]_k & D
      } \qquad \qquad
      \cd[@+0.5em]{
        A \ar[d]_f \ar[r]^u \utwocell[0.3]{dr}{\alpha} & C \ar[d]^g \\
        B \ar[r]_v \ar[ur]|{\ell} & \dtwocell[0.3]{ul}{\beta} D\rlap{ .}
      }
    \end{equation*}
    In particular, the algebraically fibrant objects for this \awfs
    are those $C \in \C$ such that, for each $f \colon A \to B$ in
    $J$, the functor $\C(f, C) \colon \C(B,C) \to \C(A,C)$ is equipped
    with a right adjoint; that is, such that each morphism $A \to C$
    in $\C$ is equipped with a chosen right Kan extension along $f$.
  \end{enumerate}\label{ex:5}
\end{Exs}

  Of course, we may construct examples dual to the preceding
  ones---dealing with ralis, opfibrations, final liftings of cocones,
  categories with colimits, and left Kan extensions---by replacing the
  \awfs for lalis in the above by that for ralis.
  
  \begin{Ex}
    Let $\V$ be the category $\cat{SSet} = [\Delta^\op, \Set]$ of
    simplicial sets, equipped with the \awfs for \emph{trivial Kan
      fibrations}, cofibrantly generated by the set of boundary
    inclusions $\{\partial \Delta[n] \to \Delta[n] : n \in \mathbb
    N\}$. Given a set $J$ of maps in the locally presentable
    $\cat{SSet}$-category $\C$, the algebraically fibrant objects of
    the enriched \awfs cofibrantly generated by $J$ are those $X \in
    \C$ such that, for each $f \colon A \to B$ in $J$, the induced map
    $\C(B, X) \to \C(A,X)$ bears algebraic trivial fibration
    structure. Most typically, one would consider this in the
    situation where $\C$ is a simplicial model category and the $J$'s
    are a class of cofibrations; then the algebraically fibrant
    objects above which are also fibrant for the underlying model
    structure are precisely the \emph{$J$-local
      objects}~\cite[Definition~3.1.4]{Hirschhorn2003Model}. The
    enriched small object argument in this particular case was
    described in~\cite[\S7]{Bousfield1977Constructions}.\label{ex:6}
\end{Ex}

Our final examples bear on the theory of
\emph{quasicategories}~\cite{Joyal2002Quasi-categories}.
Quasicategories are simplicial sets with fillers for all inner horns;
they are a particular model for $(\infty, 1)$-categories---weak higher
categories whose cells of dimension $>1$ are weakly invertible---and
have a comprehensive theory~\cite{Joyal2008The-theory,Lurie2009Higher}
paralleling the classical theory of categories. In particular, there
are good notions of \emph{Grothendieck fibration}, \emph{limit}, and
\emph{Kan extension} for quasicategories; and by analogy with
Examples~\ref{ex:4}, we may hope to obtain these notions as fibrations
or fibrant objects for \awfs constructed by enriched cofibrant
generation, so long as we can find a good quasicategorical analogue of
the \awfs for lalis.

For this we propose the \awfs of Examples~\ref{ex:8}\eqref{item:1}
above, whose $\mathsf R$-algebras we called the \emph{simplicial
  lalis}. By using the explicit formulae of
Section~\ref{sec:stable-class-monom}, we may calculate the monad
$\mathsf R$ at issue, and so obtain the following concrete description
of its algebras. First some notation: given a simplicial set $X$, we
write $x \colon a \rightsquigarrow b$ to denote a simplex $x \in
X_{n+1}$ whose $(n+1)$st face is $a$ and whose final vertex is $b$.
Now by a \emph{simplicial lali}, we mean a simplicial map $f \colon A
\to B$ together with:
\begin{itemize}
\item A section $u \colon B_0 \to A_0$ of the action of $f$ on
  $0$-simplices;
\item For each $a \in A_n$ and $x \colon fa \rightsquigarrow b$ in
  $B$, a simplex $\gamma_a(x) \colon a \rightsquigarrow ub$ over $x$;
\end{itemize}
subject to the coherence conditions that:
\begin{itemize}
\item $\gamma_a(x) \cdot \delta_i = \gamma_{a \cdot \delta_i}(x \cdot
  \delta_i)$ and $\gamma_a(x) \cdot \sigma_i = \gamma_{a \cdot
    \sigma_i}(x \cdot \sigma_i)$;
\item $\gamma_{ub}(b \cdot \sigma_0) = ub \cdot \sigma_0$; and
\item $\gamma_{\gamma_a(x)}(x \cdot \sigma_{n+1}) = \gamma_a(x) \cdot \sigma_{n+1}$.
\end{itemize}

(For the reader who is familiar with Joyal's \emph{slice construction}
on simplicial sets~\cite[\S 3]{Joyal2002Quasi-categories}, we observe
that giving the data of $\gamma$ together with the first coherence
condition amounts to giving a family of sections
$f/b \rightarrow A/ub$ for the maps
$f/ub \colon A/ub \rightarrow f/b$.)

By comparing with Proposition~\ref{prop:11}, we see that if $A = N\C$
and $B = N\D$ are the nerves of small categories, then a simplicial
lali $A \to B$ is the same thing as a lali $\C \to \D$. This, of
course, does not in itself justify the notion of simplicial lali;
in order to do so, we will relate it to a notion introduced by Riehl
and Verity in~\cite[Example~4.4.7]{Verity2013The-2-category}, which
for the present purposes we shall refer to as a \emph{quasicategorical
  lali}. A simplicial map $f \colon A \to B$ is a quasicategorical
lali if equipped with a strict section $u \colon B \to A$, a
simplicial homotopy $\eta \colon 1 \Rightarrow uf$ satisfying $f \eta
= 1_f$ (on the nose), and a $2$-homotopy
\[
\cd{
& u \ar[dr]^{1_u} \ar@{}[d]|(0.6){\theta} \\ u \ar[rr]_{1_u}
\ar[ur]^{\eta u} & & u
}
\]
satisfying $f \theta = 1_{1_B}$. We have established the following result;
its proof is beyond the scope of this paper, but note that it
is a quasicategorical analogue of the correspondence between the two
views of ordinary lalis described in Proposition~\ref{prop:11}.
\begin{Prop}
  \label{prop:13}
  Let $f \colon A \to B$ be a map of simplicial sets. If $f$ is a
  simplicial lali, then it is a quasicategorical lali; if it is a
  quasicategorical lali and an inner Kan fibration, then it is a
  simplicial lali.
\end{Prop}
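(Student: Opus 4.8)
The plan is to treat the two implications separately, echoing the elementary correspondence of Proposition~\ref{prop:11}: a simplicial lali is the ``on-the-nose'' lifting datum, a quasicategorical lali is the homotopy-coherent adjunction datum, and the inner Kan condition is exactly what lets one strictify the latter into the former. Only the second implication will use the hypothesis that $f$ is an inner Kan fibration.

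For the first implication I would build the triple $(\bar u, \eta, \theta)$ directly from the lifting operation $\gamma$. The $0$-simplex section $u$ extends to a full simplicial section $\bar u \colon B \to A$ by the recursion $\bar u(b) = \gamma_{\bar u(b\cdot\delta_n)}(b)$ for $b \in B_n$ with $n \geq 1$; this is well-typed since $f\bar u(b\cdot\delta_n) = b\cdot\delta_n$, the first coherence condition is precisely what makes $\bar u$ commute with all faces and degeneracies, and $f\bar u = 1_B$ holds because each $\gamma_a(x)$ lies over $x$. The homotopy $\eta \colon 1_A \Rightarrow \bar u f$ is the simplicial homotopy whose top cell at $a \in A_n$ is $\gamma_a(fa\cdot\sigma_n)$, an $(n+1)$-simplex from $a$ to $\bar uf(a)$ lying over the degenerate simplex $fa\cdot\sigma_n$; the coherence conditions are what assemble these into a genuine map $A\times\Delta[1]\to A$, and since the base simplices are degenerate one gets $f\eta = 1_f$ on the nose. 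The normalisation condition $\gamma_{ub}(b\cdot\sigma_0) = ub\cdot\sigma_0$ forces $\eta$ to be constant on vertices of the form $ub$, whence (with the first and third coherence conditions) the $2$-homotopy $\theta$ may be taken degenerate and $f\theta = 1_{1_B}$ holds trivially. This direction is routine, if bookkeeping-heavy.

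For the second implication, assume $(u, \eta, \theta)$ with $f$ an inner Kan fibration and construct $\gamma$. Given $a \in A_n$ and $x \colon fa \rightsquigarrow b$, I have on the one hand $u(x) \in A_{n+1}$ lying over $x$, with $u(x) \colon uf(a) \rightsquigarrow ub$, and on the other hand the top cell $E_a$ of the homotopy $\eta$ at $a$, an $(n+1)$-simplex connecting $a$ to $uf(a)$ and lying over a degenerate simplex of $fa$. Since $u(x)$ and $E_a$ share the vertex $uf(a)$, they assemble into a map out of an inner horn $\Lambda^{n+2}_{k} \to A$ (for a suitable inner $k$) whose composite with $f$ is the degenerate simplex $x\cdot\sigma_n$ of $B$; filling this over $x\cdot\sigma_n$ by the inner Kan property and taking the appropriate face defines $\gamma_a(x) \colon a \rightsquigarrow ub$, automatically over $x$. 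Conceptually this is the concatenation of the homotopy $a \to uf(a)$ with the lifted simplex $uf(a) \rightsquigarrow ub$; the case $n = 0$ is literally the filling of a $\Lambda^2_1$ with faces $\eta_a$ and $u(x)$ over the degenerate $2$-simplex $x\cdot\sigma_0$. That the constructed operation restricts to $u$ on $0$-simplices is built in, and the two normalisation conditions are arranged by taking fillers that agree with $u$ on degenerate inputs and invoking $\theta$.

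The main obstacle is coherence: inner-horn fillers are non-canonical, so independently chosen $\gamma_a(x)$ need not satisfy the first coherence condition $\gamma_a(x)\cdot\delta_i = \gamma_{a\cdot\delta_i}(x\cdot\delta_i)$, which asks that the whole operation vary simplicially in $(a,x)$. The correct way to secure this is to follow the slice reformulation flagged after the statement -- whereby $\gamma$ together with the first coherence condition is exactly a family of sections of the comma projections $f/ub \colon A/ub \to f/b$ -- and to produce these sections all at once rather than cell-by-cell. Concretely, I would prove, from the inner-fibration hypothesis, that each $f/ub \to f/b$ is a \emph{trivial} Kan fibration, the contraction of its fibres being supplied by the homotopy $\eta$ and the next coherence level by $\theta$; a trivial Kan fibration admits a section, and choosing these sections compatibly across the vertices $b$ and the faces then yields a simplicial $\gamma$ satisfying all three coherence conditions. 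Establishing that $f/ub \to f/b$ is a trivial fibration from the inner Kan condition together with the homotopy data is the genuinely quasicategorical step -- a Joyal--Lurie-style slice argument -- and is precisely the computation that the authors defer as beyond the scope of the paper.
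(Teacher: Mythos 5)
First, a point of context: the paper does not actually prove Proposition~\ref{prop:13}---the authors state explicitly that ``its proof is beyond the scope of this paper''---so there is no proof in the text to compare yours against. Judged on its own merits, your proposal contains a genuine gap. Your first implication is essentially sound and does parallel Proposition~\ref{prop:11}: the recursion $\bar u(b) = \gamma_{\bar u(b\cdot\delta_n)}(b)$ together with the coherence conditions makes $\bar u$ simplicial (though note you need the \emph{third} coherence condition, not only the first, to handle the top degeneracy $b \cdot \sigma_n$), and the second and third conditions indeed force the whiskered homotopy $\eta u$ to be constant, so $\theta$ can be taken degenerate. Even here you are sketching: a simplicial homotopy $\eta \colon A \times \Delta[1] \to A$ is not determined by its ``top cell'' at each simplex, so the full prism data and its compatibility would still have to be written out.

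The real gap is the second implication, which is where all the mathematical content of the proposition lives, and which your proposal reduces to two steps that are themselves unproven. (i) You assert that each $A/ub \to f/b$ is a trivial Kan fibration, deriving contractibility of fibres from $\eta$ and $\theta$; but this Joyal--Lurie-style slice argument is never carried out---as you yourself concede, it is ``precisely the computation that the authors defer,'' which means your proof defers exactly what the proposition claims. (ii) Even granting (i), the step ``choosing these sections compatibly across the vertices $b$ and the faces'' is not a construction but a restatement of the coherence problem you set out to avoid: a trivial Kan fibration admits sections but no canonical one, so compatibility of the chosen sections as $b$ varies requires its own inductive or lifting argument, and the slice reformulation in the paper only encodes $\gamma$ together with the \emph{first} coherence condition---the normalisation conditions (the second and third bullets in the definition of simplicial lali) would still have to be arranged separately, presumably using $\theta$. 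Until (i) and (ii) are supplied, what you have is a plausible strategy outline rather than a proof.
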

Since Riehl and Verity are able to use quasicategorical lalis to
describe various aspects of the theory of quasicategories, the above
proposition allows us to describe these same aspects using simplicial
lalis, and, hence, using the theory of enriched \awfs.


\begin{Exs}
  \label{ex:7}
  We consider enriched cofibrant generation in the case where $\V =
  \cat{SSet}$ and $(\mathsf L, \mathsf R)$ is the \awfs for simplicial
  lalis.
  \begin{enumerate}[(i)]
  \item \emph{Limits}. Let $A$ be a quasicategory, and let $A_\bot$
    denote the quasicategory $\Delta[0] \oplus A$ obtained by
    adjoining an initial vertex to $A$. Riehl and Verity show
    in~\cite[Corollary~5.2.19]{Verity2013The-2-category} that a
    quasicategory $X$ admits all limits of shape $A$---in the sense
    of~\cite[Definition~4.5]{Joyal2002Quasi-categories}---just when
    the simplicial map $X^{A_\bot} \to X^A$ bears quasicategorical
    lali structure. It follows that, among the quasicategories, those
    equipped with a choice of all limits of shape $A$ may be realised
    as the algebraically fibrant objects of an \awfs on $\cat{SSet}$;
    namely, the enriched \awfs cofibrantly generated by the single map
    $A \to A_\bot$. By enriched cofibrant generation with respect to
    the set of maps $J = \{A \to A_\bot : A \text{ a finite
      quasicategory}\}$ we may capture quasicategories with all finite
    limits as algebraically fibrant objects.\vskip0.25\baselineskip
  \item \emph{Grothendieck fibrations}. Riehl and Verity are in the
    process of analysing the quasicategorical Grothendieck
    fibration---the ``Cartesian fibrations''
    of~\cite[\S2.4]{Lurie2009Higher}---and have
    shown~\cite{Verity2014Talk} that an isofibration $g \colon C \to
    D$ of quasicategories bears such a structure just when the
    comparison functor $C^\atwo \to D \downarrow g$ bears
    quasicategorical lali structure. Here, $C^\atwo$ denotes the
    exponential $C^{\Delta[1]}$, while $D \downarrow g$ is the
    pullback of $g \colon C \to D$ along $D^{\delta_0} \colon D^{\Delta[1]}
    \to D$. Thus, among the isofibrations of quasicategories, the
    Grothendieck fibrations can be realised as the algebras of an
    \awfs on $\cat{SSet}$---namely, that obtained by enriched
    cofibrant generation with respect to the single map $\delta_0 \colon
    \Delta[0] \to \Delta[1]$.\vskip0.25\baselineskip
\item \emph{Right Kan extensions}.
  By~\cite[Example~5.0.4~and~Proposition~5.1.19]{Verity2013The-2-category},
  a morphism of quasicategories $f \colon C \to D$ admits a right
  adjoint if and only if the projection $f \downarrow D \to D$ admits
  simplicial lali structure. So suppose that $J$ is a set of morphisms between
  quasicategories; for each $f \colon A \to B$ in $J$, define $\bar B$ to be the
  the pushout of $f$ along $\delta_1 \times A \colon A \to
  \Delta[1] \times A$, and define $\bar f \colon A \to \bar B$ to be
  the composite of $\delta_0 \times A \colon A \to \Delta[1] \times A$ with
  the pushout injection $\Delta[1] \times A \to \bar B$. Then a
  quasicategory $X$ is an algebraically fibrant object for the
  enriched \awfs cofibrantly generated by $\{\bar f :
  f \in J\}$ just when each functor $X^f \colon X^B \to X^A$ has a
  right adjoint; that is, just when each functor $A \to X$ admits a
  right Kan extension along $f$.
\end{enumerate}
Of course, by considering the \awfs for simplicial \emph{ralis} in
place of simplicial lalis, we may capture notions such as colimits,
opfibrations and left Kan extensions.
\end{Exs}


\end{document}